\theoremstyle{definition}
\newtheorem{theorem}{Theorem}[section] 
\newtheorem{prop}[theorem]{Proposition}
\newtheorem{lemma}[theorem]{Lemma}
\newtheorem{corollary}[theorem]{Corollary}
\newtheorem{remark}[theorem]{Remark}
\newtheorem{defn}[theorem]{Definition} 
\newtheorem{example}[theorem]{Example} 
\newtheorem{question}[theorem]{Question}
\newtheorem{claim}[theorem]{Claim} 
\theoremstyle{definition}
\newtheorem{thmx}{Theorem}
\newcommand{\N}{\mathbb{N}}
\newcommand{\NN}{\mathbb{N}}
\newcommand{\Z}{\mathbb{Z}}
\newcommand{\ZZ}{\mathbb{Z}}
\newcommand{\R}{\mathbb{R}}
\newcommand{\RR}{\mathbb{R}}
\newcommand{\M}{\mathcal{M}}
\newcommand{\PP}{\mathcal{P}}
\newcommand{\FF}{\mathcal{F}}
\newcommand{\Fo}{\mathcal{F}}
\newcommand{\init}{\mathfrak{i}}
\newcommand{\ter}{\mathfrak{t}}
\newcommand{\symb}[1]{\texttt{#1}}
\newcommand{\norm}[1]{\|{#1}\|}
\newcommand{\QI}{\mathtt{QI}}
\newcommand{\mov}{\operatorname{mov}}
\newcommand{\ind}{\operatorname{ind}}
\newcommand{\val}{\operatorname{val}}
\newcommand{\pos}{\operatorname{pos}}
\newcommand{\orb}{\textnormal{orb}}
\newcommand{\supp}{\textnormal{supp}}
\providecommand{\keywords}[1]{{\small\textit{Keywords:} #1}}
\definecolor{azul}{rgb}{0.29,0.56,0.89}
\definecolor{rojo}{rgb}{0.82,0.01,0.11}
\definecolor{naranjo}{rgb}{0.96,0.65,0.14}
\definecolor{verde}{RGB}{126,211,33}
\newcommand{\define}[1]{\textbf{#1}}
\tikzset
{%
  tile1/.style={thick,draw=black,fill=gray!10},
  tile2/.style={thick,draw=black,fill\input{main_ams}2w=white!10},
  pics/tile/.style={
    code={%
    \path[pic actions] (0,0)            --++ (330:{cos(30)})  --++ (60:0.5)  --++ (120:1)   -|++
                       (-0.5,{cos(30)}) --++ (150:{cos(30)})  --++ (240:0.5) --++ (300:0.5) --++
                       (210:{cos(30)})  |-++ (0.5,{-cos(30)}) --++ (300:0.5) -- cycle;
    \draw[fill = black] (-0.5,0.5) circle (0.05);
    }
    }
}
\newcommand{\Addresses}{{
		\bigskip
		
		\hskip-\parindent   S.~Barbieri, \textsc{Departamento de Matem\'{a}tica y ciencia de la computaci\'{o}n, Universidad de Santiago de Chile, Santiago, Chile.}\par\nopagebreak
		\textit{E-mail address}: \texttt{sebastian.barbieri@usach.cl}
		
		\medskip
		
		\hskip-\parindent   N.~Bitar, \textsc{
        Laboratoire Ami\'enois de Math\'{e}matique Fondamentale et Appliqu\'{e}e (UMR CNRS 7352), Universit\'{e} de Picardie Jules-Verne,  Amiens, France.}\par\nopagebreak
		\textit{E-mail address}: \texttt{nicolas.bitar@u-picardie.fr}

}}
\begin{document}

\author{Sebasti\'an Barbieri and Nicol\'as Bitar}
\date{}
\title{A general framework for quasi-isometries in symbolic dynamics beyond groups}
\maketitle

\begin{abstract}
We introduce an algebraic structure which encodes a collection of countable graphs through a set of states, generators and relations. These structures, which we call blueprints, can capture standard
algebraic objects such as groups, monoids or small categories, as well as geometric tiling spaces with finite local complexity.

We provide a general framework for symbolic dynamics on blueprints under a partial monoid action, and for transferring invariants of their symbolic dynamics through quasi-isometries. In particular, we show that the undecidability of the domino problem, the existence of strongly aperiodic subshifts of finite type, and the existence of subshifts of finite type without computable points are all quasi-isometry invariants for finitely presented blueprints. As an application of this model, we show that two variants of the domino problem for geometric tilings of $\RR^d$ are undecidable for $d \geq 2$ on any underlying tiling space with finite local complexity.

\vskip .1in
\noindent\keywords{Quasi-isometries, symbolic dynamics, domino problem, aperiodicity, geometric tilings.}
\smallskip
		\noindent
		\emph{MSC2020:} \textit{Primary:}
        37B10, 
		\textit{Secondary:}
        52C23, 
        05C25. 

\end{abstract}

\section{Introduction}

A recent trend in the study of subshifts of finite type (SFT) on groups is to explore how the properties of the underlying group influence the computational and dynamical properties of the SFTs, and vice versa. This has been done through the study of computability invariants~\cite{berger1966undecidability,callard2022aperiodic,bitar2023contributions}, dynamical invariants such as aperiodicity~\cite{barbieri2023aperiodic_non_fg,bitar2024realizability}, the set of possible topological entropies~\cite{barbieri2021entropies,raymond2023shifts,bartholdi2024shifts}, Medvedev degrees~\cite{Simpson_medvedev_2014,barbieri2024medvedev}, among others. 

Two fundamental problems in this regard are the classification of groups with undecidable domino problem and the classification of groups that admit strongly aperiodic SFTs (those for which the shift action is free). The study of these two problems was launched by Berger~\cite{berger1966undecidability} who constructed the first strongly aperiodic SFT on $\Z^2$, and used it to prove the undecidability of the domino problem for this group. Since then, many groups have been shown to admit strongly aperiodic SFTs, and many have had the decidability of their domino problem classified. For the latter problem, the domino problem Conjecture (attributed to Ballier and Stein~\cite{ballier2013domino}) states that a finitely generated group has decidable domino problem if and only if the group is virtually free (see~\cite[Chapter 2]{bitar2024tesis} for a recent survey). For the former problem, it is conjectured that a finitely generated and recursively presented group admits a strongly aperiodic SFT if and only if it has one end and decidable word problem (see~\cite{bitar2024realizability} for a recent survey).

An important tool in the study of these two problems is to encode the structure of a group into another through symbolic spaces. Two pioneering examples of this technique are the techniques used by Ballier and Stein~\cite{ballier2013domino} and the work of Jeandel on translation-like actions~\cite{jeandel2015translation}. One of the most abstract ways to do these codings is through quasi-isometries. Indeed, Cohen showed in~\cite{cohen2017large} that the undecidability of the domino problem and the existence of strongly aperiodic SFTs are quasi-isometry invariants for finitely presented groups. This is achieved by using the space of quasi-isometries between the two groups to code the structure of one group on the other through local rules. The hypothesis of finite presentability is crucial to ensure the resulting subshift is an SFT. This same proof technique has been used to prove the invariance under quasi-isometries of self-simulable groups~\cite{barbieri2021groups} and the set of Medvedev degrees of SFTs~\cite{barbieri2024medvedev} (provided the quasi-isometry is computable) for finitely presented groups with decidable word problem.

Interestingly, there are a number of recent results about these two problems that implicitly employ quasi-isometries that involve structures that are not groups. This is the case of the proof of the undecidability of the domino problem for surface groups~\cite{aubrun2019domino} and more generally, for non-virtually free hyperbolic groups~\cite{bartholdi2023domino}. Similarly, combinatorial results about tilings are implicitly based on encoding a quasi-isometry with a group. For instance, the undecidability of the domino problem for rhombus tilings~\cite{hellouin2023domino} and for two-dimensional geometric tilings with finite local complexity~\cite{de2024decision}. There have been other attempts at understanding the underlying conditions that account for the undecidability of the domino problem on $\Z^2$ that go beyond groups. Among these are subjecting $\Z^2$-SFTs to horizontal constraints~\cite{aubrun2020domino,esnay2023parametrization}, studying automatic-simulations between labeled graphs~\cite{bartholdi2020simulations}, monadic second order logic on labeled graphs~\cite{bartholdi2022monadic}, and studying the domino problem on self-similar two-dimensional substitutions~\cite{barbieri2016domino}.\\

The objective of this article is to find a common framework for the aforementioned results through the introduction of structures we call \define{blueprints}. The goal of these structures is to capture spaces of graphs which are locally finite and ``finitely presented''. They are defined by a set of states, a set of generators, and a set of relations (see Definition~\ref{def:blueprint}). Each generator has an initial state and a set of terminal states, and two generators can be composed if the initial state of the second is contained on the set of terminal states of the first. Because there are multiple choices for the terminal state of a generator, we make use of functions we call \define{models} that map each word over the generators to either a state or the empty set in a way that is consistent with the composition of generators and the equivalence relation generated by the relations of the blueprint (see Definitions~\ref{def:consistent} and~\ref{def:model}). Finally, each model has an associated directed labeled graph where vertices are equivalence classes of words sent to the set of states by the model under the equivalence relation, and edges are given by the generators. Blueprints can be used to model small categories, in the sense that every Cayley graph of a small category can be realized as the space of graphs of models of a blueprint. 

The next step is studying subshifts on blueprints. The definition of a subshift in this context is similar to the one from the group setting, except for the fact that configurations are composed of a model of the blueprint and a coloring of the set of all words over the generators by a finite alphabet that is consistent with the model and the relations (see Definition~\ref{def:subshift}). With this formalism, we define a natural analogue of SFT, and recover classical results from the group setting such as the Curtis-Hedlund-Lyndon Theorem (Theorem~\ref{thm:CHL}).\\

We note that other ways of describing finite presentations for graphs are present in the literature (see~\cite{arrighi2023graph,meyer2005graphes}), which we do not explore in this article.  Furthermore, there have been similar attempts to capture symbolic dynamics on graphs. In particular, in~\cite{arrighi2023graph} produces very similar (undirected) objects, but our formalism is more natural for working with quasi-isometries.

\subsection*{Main results }
We generalize Cohen's result to finitely presented blueprints whose model graphs are strongly connected. We say two blueprints are quasi-isometric when all of their model graphs are quasi-isometric, seen as quasi-metric spaces. Theorem~\ref{thm:QI} provides a black box that embeds an SFT from a blueprint into an SFT in the other blueprint in a geometric way that preserves many of its dynamical properties.

Given a fixed blueprint, its domino problem is the formal language of all finite collections of forbidden patterns which give rise to nonempty subshifts. With the use of this black box, we prove the invariance of the undecidability of the domino problem.

\begin{thmx}[Theorem~\ref{thm:domino}]
    Let $\Gamma_1$, $\Gamma_2$ be two finitely presented strongly connected blueprints that are quasi-isometric. Then, the $\Gamma_1$-domino problem is decidable if and only if the $\Gamma_2$-domino problem is decidable.
\end{thmx}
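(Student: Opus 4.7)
The plan is to use Theorem~\ref{thm:QI} as a black box and run a Cohen-style simulation reduction, together with a symmetry argument to handle both implications.

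Concretely, I would suppose that the $\Gamma_2$-domino problem is decidable and let $F_1$ be an arbitrary instance of the $\Gamma_1$-domino problem, that is, a finite set of forbidden patterns on $\Gamma_1$ over some finite alphabet. Denote by $X_{F_1}$ the resulting SFT on $\Gamma_1$. Invoking the quasi-isometry hypothesis between $\Gamma_1$ and $\Gamma_2$ and feeding $X_{F_1}$ into the embedding machine of Theorem~\ref{thm:QI}, I would produce an SFT $Y$ on $\Gamma_2$ which geometrically embeds $X_{F_1}$; in particular, $X_{F_1}$ is nonempty if and only if $Y$ is nonempty. Writing $Y$ as the SFT given by some finite forbidden set $F_2$ on $\Gamma_2$, the decidability hypothesis applied to $F_2$ decides the emptiness of $X_{F_1}$, hence the $\Gamma_1$-domino problem. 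Swapping the roles of $\Gamma_1$ and $\Gamma_2$ (the hypotheses are symmetric under inverting a quasi-isometry) yields the converse.

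For this to actually produce a decision procedure, two effectivity features of the black box must be checked: (i) the set $F_2$ must be finite, and (ii) the map $F_1 \mapsto F_2$ must be computable from presentations of $\Gamma_1$ and $\Gamma_2$. Feature (i) is exactly where finite presentability enters: on top of a translated copy of $F_1$, the forbidden patterns defining $Y$ must locally enforce the existence of a coherent quasi-isometric embedding of a model of $\Gamma_1$ into every model of $\Gamma_2$, and these local checks reduce to verifying finitely many relations precisely when both blueprints are finitely presented. Feature (ii) should be a routine inspection of the construction supporting Theorem~\ref{thm:QI}.

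The main obstacle is the correctness of this simulation in the blueprint setting, which is what Theorem~\ref{thm:QI} is designed to package. The two new difficulties compared with the group setting are that a blueprint carries a whole family of model graphs rather than a single Cayley graph, and that a blueprint configuration carries both a coloring layer and a choice of model. Strong connectedness is what lets us speak of \emph{the} quasi-isometry type of the blueprint and encode it uniformly across all models via local rules, while finite presentability is what keeps the rules coordinating the coloring and geometric layers finite in number. Granting Theorem~\ref{thm:QI} in its effective form, the present theorem then follows by the reduction described above.
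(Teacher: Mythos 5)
Your approach coincides with the paper's: fix an instance (finite forbidden set) on one blueprint, feed it to the $\QI$ construction of Theorem~\ref{thm:QI} to obtain an SFT on the other blueprint whose emptiness is equivalent and whose finite defining set is computable from the original instance and the fixed presentations, and then invoke the assumed decision procedure on the second blueprint; symmetry gives the converse. Your identification of where finite presentability (finiteness of the transferred forbidden set) and uniform effectivity (computability of $F_1\mapsto F_2$) are used matches the paper's reasoning.
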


We also prove the invariance of a pointed variant of the domino problem that requires the additional assumption of minimality on one of the blueprints (\Cref{thm:domino_model}).

Given a subshift on a blueprint, we say it is \define{strongly aperiodic} if the partial shift action is free. Again, using our black box, we show that the existence of strongly aperiodic SFTs is an invariant. 

\begin{thmx}[Theorem~\ref{thm:SA}]\label{mainthm:SA}
    Let $\Gamma_1$, $\Gamma_2$ be two finitely presented strongly connected blueprints that are quasi-isometric. $\Gamma_1$ admits a strongly aperiodic SFT if and only if $\Gamma_2$ admits a strongly aperiodic SFT.
\end{thmx}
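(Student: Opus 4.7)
The plan is to use the black box provided by Theorem~\ref{thm:QI} and verify that strong aperiodicity is one of the dynamical invariants it preserves. By symmetry, it suffices to prove a single direction: assuming $\Gamma_1$ admits a strongly aperiodic SFT $X_1$, construct one on $\Gamma_2$. First I would fix a quasi-isometry from $\Gamma_2$ to $\Gamma_1$ and feed it together with $X_1$ into Theorem~\ref{thm:QI} to obtain an SFT $X_2$ on $\Gamma_2$; each configuration of $X_2$ should then locally encode a discretized quasi-isometry into some model of $\Gamma_1$ together with a pulled-back $X_1$-decoration on its image.

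The verification step would proceed by contradiction. Suppose some $x_2 \in X_2$ admits a nontrivial period $g$, in the sense that $g \cdot x_2 = x_2$ for the partial shift action. Then $g$ fixes both the encoded quasi-isometry $\varphi$ and the $X_1$-symbols layered on top of it; pushing $g$ through $\varphi$ should produce, up to a uniformly bounded correction, a word $g'$ in the generators of $\Gamma_1$ whose action fixes the underlying configuration $x_1 \in X_1$. Strong aperiodicity of $X_1$ then forces $g'$ to be trivial, and reversing the argument --- here using that $\Gamma_1$ is strongly connected so that an element of its partial monoid is determined by its action on any vertex of a model --- one concludes that $g$ was trivial to begin with, giving the desired contradiction.

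The main obstacle is precisely this transfer of periods across a partial action. In Cohen's original group setting~\cite{cohen2017large}, a period is a genuine group element and the quasi-isometry admits an honest quasi-inverse, so the correspondence between periods on the two sides is essentially automatic. Here a period is a word modulo the relations of a finitely presented blueprint, and the quasi-isometry is defined only up to bounded error, so producing $g'$ as an actual element of the $\Gamma_1$-monoid requires absorbing this bounded correction into a concrete composition of generators. This is exactly where the finite presentation hypothesis (bounding the correction uniformly) and the strongly connected hypothesis (realizing the correction as an actual word in the generators) should come into play, paralleling their role inside the proof of Theorem~\ref{thm:QI} itself.
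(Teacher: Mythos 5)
Your proposal follows essentially the same route as the paper: apply Theorem~\ref{thm:QI} to get the subshift $\QI(\mathcal{F},N)$, take a hypothetical period, transport it to a period of the decoded configuration via the coding maps ($\theta$, $\mu$, $\gamma$ from Definition~\ref{def:coding}), invoke strong aperiodicity of the original SFT, and push the triviality back. One small correction to the mechanism you sketch: strong connectedness is not used because ``an element of the partial monoid is determined by its action on a vertex,'' but rather to produce a return word $v$ with $\underline{u_0 v}_{\Gamma_1}=\underline{\varepsilon}_{\Gamma_1}$ for the $\theta$-correction $u_0$, so that the conjugated word $w'=vwu_0$ is a genuine period based at a coded preimage and the conclusion $\underline{w'}_{\Gamma_1}=\underline{\varepsilon}_{\Gamma_1}$ can be unwound to $\underline{w}_{\Gamma_1}=\underline{\varepsilon}_{\Gamma_1}$.
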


We remark that while Cayley graphs of groups have a lot of symmetries, the same is not necessarily true for a blueprint, thus, in principle, it is much easier for a blueprint to admit a strongly aperiodic SFT. Hence~\Cref{thm:SA} can be used as a tool to show that complicated groups admit strongly aperiodic SFTs (this is in fact what has implicitly been used in the literature). We also show a variant of~\Cref{mainthm:SA} that requires less assumptions on the quasi-isometries, but more on the structure of the blueprint (\Cref{thm:SA_v2}).

We generalize two results of~\cite[Corollary 4.24]{barbieri2024medvedev} on the invariance under computable quasi-isometries of the set of Medvedev degrees of SFTs between finitely presented groups. We show that the existence of SFTs with uncomputable points is an invariant of quasi-isometry (\Cref{thm:medvedev_noassumpt}) and that under a stronger computability assumption, the whole class of Medvedev degrees of SFTs on a blueprint is an invariant of quasi-isometry (\Cref{thm:medvedev_full}).

Finally, we apply our formalism to the context of $d$-dimensional geometric tilings. From a set of punctured tiles $\PP$ that define a tiling space $\Omega(\PP)$ with finite local complexity, and two parameters $K,L\in\N$, we construct a blueprint $\Gamma(\PP,K,L)$ whose space of models is homeomorphic to the space of punctured tilings $\Omega_0(\PP)$, provided $K\geq 117$ and $L\geq 2K+14$ (Proposition~\ref{prop:geometric_main}). In this context, define a colored tiling as a tiling made up of tiles from $\PP\times A$, for a finite alphabet $A$. These tilings can be seen as geometric tilings from $\Omega(\PP)$ where each tile is given a label or color from $A$. Then, the $\PP$-domino problem is the decision problem that asks, given a finite set of colored tiles and a finite set of forbidden patterns, whether there exists a symbolic-geometric tiling where no forbidden pattern occurs. By combining all the previous results, and using the fact that for $K\geq 117$ and $L\geq 2K+14$, the blueprint $\Gamma(\PP, K,L)$ is quasi-isometric to $\Z^d$, we obtain the following result.

\begin{thmx}[Theorem~\ref{thm:domino_geom}]
       Let $d\geq 2$ and $\PP$ be a finite set of punctured tiles with finite local complexity. Then, the $\PP$-domino problem is undecidable.
\end{thmx}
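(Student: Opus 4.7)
The plan is to combine the blueprint construction from Proposition~\ref{prop:geometric_main}, the quasi-isometry invariance given by Theorem~\ref{thm:domino}, and Berger's classical undecidability theorem for $\Z^d$.

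First, I would fix parameters $K \geq 117$ and $L \geq 2K+6$, so that Proposition~\ref{prop:geometric_main} yields a finitely presented strongly connected blueprint $\Gamma = \Gamma(\PP, K, L)$ whose space of models is homeomorphic to $\Omega_0(\PP)$, and which the excerpt tells us is quasi-isometric to $\Z^d$. Since the $\Z^d$-domino problem is undecidable for $d \geq 2$ by Berger's theorem, Theorem~\ref{thm:domino} immediately gives that the $\Gamma$-domino problem is undecidable.

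The remaining, and main, step is to reduce the $\Gamma$-domino problem to the $\PP$-domino problem. A $\Gamma$-SFT is specified by a finite alphabet $A$ together with a finite set of forbidden patterns over the generators of $\Gamma$. Because a model of $\Gamma$ corresponds to a punctured tiling in $\Omega_0(\PP)$, and an $A$-coloring consistent with such a model corresponds to a colored tiling with tiles drawn from $\PP \times A$, one has a natural correspondence between nonempty $\Gamma$-SFTs and nonempty subshifts of colored $\PP$-tilings. Finite local complexity of $\PP$ is what makes this correspondence effective: patterns of bounded radius fall into only finitely many combinatorial types, so any finite family of forbidden blueprint patterns can be translated into a finite family of forbidden colored tile patterns (and back). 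Consequently, a decision procedure for the $\PP$-domino problem would produce one for the $\Gamma$-domino problem, contradicting the previous paragraph and proving the theorem.

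The main obstacle is this final translation: one must carefully match finite patterns over words in the generators of $\Gamma(\PP, K, L)$ with finite patterns supported on bounded geometric neighborhoods of colored tiles, and verify that the resulting map on forbidden families is computable and preserves emptiness of the associated subshift. The explicit combinatorial construction of $\Gamma(\PP, K, L)$ from Proposition~\ref{prop:geometric_main}, together with finite local complexity giving a uniform bound on local types, should make this a technical but routine bookkeeping argument.
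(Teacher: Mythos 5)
Your proposal follows the same route as the paper: construct $\Gamma = \Gamma(\PP, K, L)$, use the quasi-isometry to $\Z^d$ together with Theorem~\ref{thm:domino} to conclude the $\Gamma$-domino problem is undecidable (this is exactly Proposition~\ref{prop:patchdomino}), and then many-one reduce the $\Gamma$-domino problem to the $\PP$-domino problem via the homeomorphism of Proposition~\ref{prop:geometric_main}. The paper carries out the final translation (blueprint nearest-neighbor patterns into colored pretiling codings and back) explicitly, but your outline captures the right idea and correctly identifies FLC as the property making it effective.
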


We remark that in~\Cref{thm:domino_geom} we also show that a variant of the domino problem with respect to a single fixed tiling is also shown to be undecidable. This result generalizes the results from~\cite{hellouin2023domino,de2024decision} on the domino problem to all dimensions $d\geq 2$. We also refer to other attempts at capturing geometric tilings with algebraic structures~\cite{coulbois2024aperiodic} and regular grids~\cite{sadun2006tilings}. We also note that even for polygonal tilesets with rational coordinates, it is undecidable whether their associated tiling space has finite local complexity~\cite{de2024decision}.

\subsection*{Structure of the article}

We begin by introducing blueprints in Section~\ref{sec:blueprints}. Here we introduce the notion of a model, its corresponding model graphs, a blueprint's model space as well as the topology and dynamics of this space. We then move to subshifts over blueprints in Section~\ref{sec:subshifts}. We define two notions of subshift: one that depends on a fixed model that generalizes the natural notion from groups, but is not endowed with dynamics (Definition~\ref{def:phi_subshift}), and one over the whole blueprint composed of model-configuration pairs that is endowed by a partial monoid action (Definition~\ref{def:subshift}). For this latter definition we recover results from the classic theory including the Curtis-Hedlund-Lyndon Theorem. Section~\ref{sec:quasi-isometries} is the heart of the article, where we talk about quasi-isometries quasi-metric spaces (blueprints in particular), and prove the black-box theorem (Theorem~\ref{thm:QI}). In Section~\ref{sec:qi_rigidity}, we use the theorem to prove that the undecidability of the domino problem, and the existence of strongly aperiodic subshifts of finite type are quasi-isometry invariants for finitely presented strongly connected blueprints. We also prove the invariance of Medvedev degrees of subshifts, provided the quasi-isometries are computable and the blueprints have decidable word problems. Finally, Section~\ref{sec:geometric} is concerned with $d$-dimensional geometric tilings of the Euclidean space. Here we prove that the structure of a tiling with finite local complexity can be captured by a blueprint (Proposition~\ref{prop:geometric_main}), and use some of the aforementioned results to prove that two variants of the geometric domino problem are undecidable for these tilings (\Cref{thm:domino_geom}). We finish the section by discussing the case of tilings of $\RR^d$ given by a finite set of tiles up to isometries, and the case of tilings of hyperbolic spaces.

%
%
%
%
\section{Blueprints}
\label{sec:blueprints}

\begin{defn}
\label{def:blueprint}
    A \define{blueprint} is a tuple $\Gamma = (M,S,\init,\ter,R)$ which consists of:
    \begin{itemize}
        \item A nonempty set $M$ of \define{states}.
        \item A nonempty set $S$ of \define{generators}.
        \item Two functions $\init\colon S\to M$ and $\ter\colon S\to\mathcal{P}(M)\setminus \{\varnothing\}$ which denote respectively the \define{initial state} and \define{possible final states} of each generator.
        \item A set $R$ of \define{relations}, which contains pairs of the form $(u,v)$ with $u,v\in S^*$.
    \end{itemize}
\end{defn}

For a nonempty word $w = w_1\dots w_n \in S^*$, the maps $\init$ and $\ter$ extend naturally by $\init(w) = \init(w_1)$ and $\ter(w) = \ter(w_n)$. In order to simplify the notation, we will often only write $\Gamma = (M,S,R)$ and leave the maps $\init$ and $\ter$ implicit. We say that a blueprint is \define{finitely generated} if both $M$ and $S$ are finite, and we say it is \define{finitely presented} if $M$, $S$ and $R$ are finite sets.\\


A word $w = w_1\dots w_n \in S^*$ is called \define{$\Gamma$-consistent} if $w$ is either the empty word or for every $i \in \{1,\dots, n-1\}$ we have $\init(w_{i+1}) \in \ter(w_i)$. Given two $\Gamma$-consistent words $u,v \in S^*$ we say they are \define{$\Gamma$-similar} if there exists a relation $(w,w') \in R$ and words $x,y \in S^*$ such that $u = xwy$ and $v = xw'y$. We say that $u,v$ are \define{$\Gamma$-equivalent} if they are equivalent for the equivalence relation generated by $\Gamma$-similarity. We denote by $\underline{w}_{\Gamma}$ the equivalence class under $\Gamma$-equivalence of a consistent word $w$.

\begin{defn}
\label{def:consistent}
    A map $\varphi\colon S^*\to M\cup\{\varnothing\}$ is \define{$\Gamma$-consistent} if $\varphi(\varepsilon)\in M$ and for all $w\in S^*$ and $s \in S$,
    \begin{itemize}
        \item if $\init(s) = \varphi(w)$, then $\varphi(ws)\in \ter(s)$,
        \item if $\init(s) \neq \varphi(w)$ then $\varphi(ws)= \varnothing$.
    \end{itemize}
\end{defn}

The support of a $\Gamma$-consistent map $\varphi$, denoted $\supp(\varphi)$, is defined as the set of words $w\in S^*$ such that $\varphi(w)\in M$. Notice that if $\varphi$ is $\Gamma$-consistent, then $\supp(\varphi)$ is an infinite subtree of $S^*$ whose paths are made up of $\Gamma$-consistent words.

\begin{defn}
\label{def:model}
We say a $\Gamma$-consistent map $\varphi\colon S^{*} \to M\cup\{\varnothing\}$ is a $\Gamma$-\define{model} if for every pair of $\Gamma$-equivalent words $u,v\in \supp(\varphi)$ we have $\varphi(u) = \varphi(v)$. The space of all $\Gamma$-models is defined as
\[\M(\Gamma) = \{\varphi\in (M\cup \{\varnothing\})^{S^*}\
: \varphi \text{ is a $\Gamma$-model } \}.\]
\end{defn}

We want to think on a model as a geometrical realization of a particular choice of states taken from a blueprint. With that aim in mind, we associate a graph to each model. More precisely, given $\varphi\in\M(\Gamma)$, we define its \define{model graph} as the directed labeled graph $G(\Gamma, \varphi) = (V,E)$ given by the set of vertices $V = \{ \underline{w}_{\Gamma} : w\in \supp(\varphi)\}$ and edges \[E = \{ (\underline{w}_{\Gamma}, \underline{ws}_{\Gamma}, s) : w \in \supp(\varphi), s\in S \mbox{ and } \init(s) = \varphi(w)\}.\] 

Before equipping these spaces with topology and dynamics, let us provide a few simple examples.

\begin{example}\label{ex:1-2-3}
    Consider the ``$1$-$2$ tree'' blueprint $\Gamma = (M,S,R)$ given by $M = \{\texttt{1},\texttt{2}\}$, $S = \{ s,\ell,r\}$ and $R = \varnothing$, where the initial and terminal functions are described in~\Cref{table:rules_for_unary_binary_tree}.
    \begin{table}[ht!]
			\centering
			\begin{tabular}{|c||c|c|c|}
				\hline
				$S$ & $s$ & $\ell$ & $r$\\
				\hline
				$\init$ & $\texttt{1}$ & $\texttt{2}$ & $\texttt{2}$ \\
				\hline
				$\ter$ & $\{\texttt{1},\texttt{2}\}$ & $\{\texttt{1},\texttt{2}\}$ & $\{\texttt{1},\texttt{2}\}$\\
				\hline
			\end{tabular}
   \caption{Rules for the $1$-$2$ tree blueprint.}
        \label{table:rules_for_unary_binary_tree}
		\end{table}

  In this blueprint, all words in $S^*$ are $\Gamma$-consistent, and as $R=\varnothing$ the space of $\Gamma$-models coincides with the space of $\Gamma$-consistent maps, which represents the space of rooted trees in which every vertex can arbitrarily have one or two descendants. The graph of a typical model is shown in~\Cref{fig:123-model}.

  	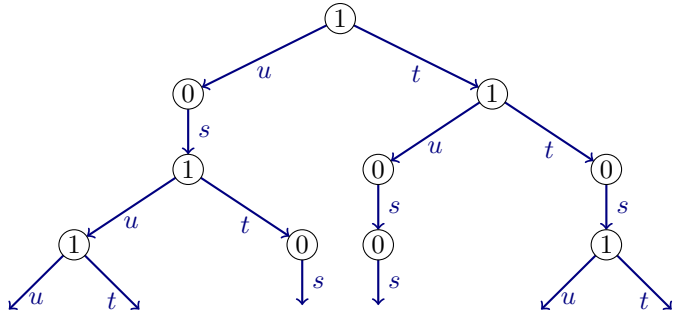
\begin{figure}[ht!]
		\centering
\begin{tikzpicture}[scale=1]
				\draw [->, blue!50!black, thick, shorten >= 0.2cm] (0,0) -- (-2,-1) node [pos=0.5, below] {$\ell$};
				\draw [->, blue!50!black, thick, shorten >= 0.2cm] (0,0) -- (2,-1) node [pos=0.5, below] {$r$};
                
                \draw [->, blue!50!black, thick, shorten >= 0.2cm] (-2,-1) -- (-2,-2) node [pos=0.5, right] {$s$};
				\draw [->, blue!50!black, thick, shorten >= 0.2cm] (2,-1) -- (0.5,-2) node [pos=0.5, below] {$\ell$};
                \draw [->, blue!50!black, thick, shorten >= 0.2cm] (2,-1) -- (3.5,-2) node [pos=0.5, below] {$r$};

                \draw [->, blue!50!black, thick, shorten >= 0.2cm] (-2,-2) -- (-3.5,-3) node [pos=0.5, below] {$\ell$};
                \draw [->, blue!50!black, thick, shorten >= 0.2cm] (-2,-2) -- (-0.5,-3) node [pos=0.5, below] {$r$};
				\draw [->, blue!50!black, thick, shorten >= 0.2cm] (0.5,-2) -- (0.5,-3) node [pos=0.5, right] {$s$};
                \draw [->, blue!50!black, thick, shorten >= 0.2cm] (3.5,-2) -- (3.5,-3) node [pos=0.5, right] {$s$};

                \draw [->, blue!50!black, thick, shorten >= 0.2cm] (-3.5,-3) -- (-4.5,-4) node [pos=0.5, below] {$\ell$};
                \draw [->, blue!50!black, thick, shorten >= 0.2cm] (-3.5,-3) -- (-2.5,-4) node [pos=0.5, below] {$r$};
                \draw [->, blue!50!black, thick, shorten >= 0.2cm] (-0.5,-3) -- (-0.5,-4) node [pos=0.5, right] {$s$};
				\draw [->, blue!50!black, thick, shorten >= 0.2cm] (0.5,-3) -- (0.5,-4) node [pos=0.5, right] {$s$};
                \draw [->, blue!50!black, thick, shorten >= 0.2cm] (3.5,-3) -- (2.5,-4) node [pos=0.5, below] {$\ell$};
                \draw [->, blue!50!black, thick, shorten >= 0.2cm] (3.5,-3) -- (4.5,-4) node [pos=0.5, below] {$r$};
			
				\draw[fill = white] (0,0) circle (0.2) node {$2$};
                \draw[fill = white] (-2,-1) circle (0.2) node {$1$};
                \draw[fill = white] (2,-1) circle (0.2)node {$2$};

                \draw[fill = white] (-2,-2) circle (0.2) node {$2$};
                \draw[fill = white] (0.5,-2) circle (0.2) node {$1$};
                \draw[fill = white] (3.5,-2) circle (0.2)node {$1$};

                \draw[fill = white] (-3.5,-3) circle (0.2) node {$2$};
                \draw[fill = white] (-0.5,-3) circle (0.2) node {$1$};
                \draw[fill = white] (0.5,-3) circle (0.2) node {$1$};
                \draw[fill = white] (3.5,-3) circle (0.2) node {$2$};

		\end{tikzpicture}
		\caption{The graph of a model in the $1$-$2$ tree blueprint. The state $\texttt{1}$ is always followed by the single generator $s$ while the state $\texttt{2}$ is followed by the two generators $\ell,r$.}
		\label{fig:123-model}
	\end{figure}
\end{example}

In the next example we show that when we consider blueprints with a single state, we can recover Cayley graphs of monoids.

\begin{example}\label{ex:monoide_y_grupo}
    Let $\Gamma = (M,S,R)$ be a blueprint such that $M = \{m\}$ is a singleton. Then the initial and terminal maps are superfluous as the only option is that for every $s \in S$, $\init(s)=m$ and $\ter(s)=\{m\}$. It follows that the space of models is the singleton which consists of the constant map $\varphi\colon S^* \to \{m\}$. In this case, the graph $G(\Gamma, \varphi)$ corresponds to the Cayley graph of the monoid generated by $S$ given by the set of relations $R$. In the case where for every $s \in S$ there is $s^{-1}\in S$ with the relations $(ss^{-1},\varepsilon)$ and $(s^{-1}s,\varepsilon)$, then $G(\Gamma, \varphi)$ is the Cayley graph of the group generated by $S$ under the relations $R$.

    In other words, there is a correspondence between monoids and blueprints with $|M|=1$.
\end{example}

In the case where there is more than one state but the terminal function is deterministic, we obtain Cayley graphs for small categories and groupoids (see~\cite{Ibort2019} for an introductory reference).

\begin{example}
    Let $\Gamma = (M,S,R)$ be a blueprint such that for every $s \in S$, $|\ter(s)|=1$. Then the graph of each model corresponds to a connected component of a Cayley graph of a small category. Furthermore, if inverse relations are added as in~\Cref{ex:monoide_y_grupo}, the graph of each model will correspond to a connected component of a Cayley graph of a groupoid.
\end{example}


Finally, let us show a more interesting example that yields combinatorial models of unrooted binary tilings of the hyperbolic plane.

\begin{example}\label{ex:hyperbolic_tilings}
    Consider the \define{hyperbolic tiling blueprint} $\mathcal{H} = (M,S,R)$ where $M = \{\texttt{0},\texttt{1}\}$ and $S = \{s_{0}^0, s_{0}^1, s_{1}^0, s_{1}^1, t^{\pm1}_0, t^{\pm1}_1, p_0, p_1\}$ and with functions given by~\Cref{table:rules_for_hyperbolic plane}.
    \begin{table}[ht!]
			\centering
			\begin{tabular}{|c||c|c|c|c|c|c|c|c|c|c|}
				\hline
				$S$ & $s_0^0$ & $s_0^1$ & $s_1^0$& $s_1^1$ & $t_0^{+}$ & $t_0^{-}$ & $t_1^{+}$ & $t_1^{-}$ & $p_0$ & $p_1$\\
				\hline
				$\init$ & $\texttt{0}$ & $\texttt{0}$ & $\texttt{1}$ & $\texttt{1}$ & $\texttt{0}$ & $\texttt{1}$ & $\texttt{1}$ & $\texttt{0}$ & $\texttt{0}$ & $\texttt{1}$\\
				\hline
				$\ter$ & $\{\texttt{0}\}$ & $\{\texttt{1}\}$ & $\{\texttt{0}\}$ & $\{\texttt{1}\}$ & $\{\texttt{1}\}$ & $\{\texttt{0}\}$ & $\{\texttt{0}\}$ & $\{\texttt{1}\}$ & $\{\texttt{0},\texttt{1}\}$& $\{\texttt{0},\texttt{1}\}$\\
				\hline
			\end{tabular}
   \caption{Rules for the binary hyperbolic tiling blueprint.}
        \label{table:rules_for_hyperbolic plane}
		\end{table}

    For the relations, we let $R = R_0 \cup R_1$ where:
    \[R_0 = \{(s_0^0 t^{+}_0,s_0^1), \ (s_{0}^1t^{+}_1,t^{+}_0 s_1^0), \ (p_0 s_0^1,t^{+}_0), \ (s_0^0p_0,\varepsilon), \ (p_0s_1^1,t^{+}_0), \ (s_0^1 p_1,\varepsilon), \ (t_0^{+}t_1^{-},\varepsilon), \ (t_0^{-}t^{+}_1,\varepsilon)\},\]
    and 
    \[R_1 = \{(s_1^0 t^{+}_0,s_1^1), \ (s_{1}^1t^{+}_1,t^{+}_1 s_0^0), \ (p_1 t^{+}_0 s_1^0,t^{+}_1), \ (s_1^0p_0,\varepsilon), \ (p_1t^{+}_1s_0^0,t^{+}_1), \ (s_1^1 p_1,\varepsilon), \ (t^{+}_1t_0^{-},\varepsilon), \ (t_1^{-}t^{+}_0,\varepsilon)\}.\]

     Let us look at different models and their corresponding graphs for $\mathcal{H}$. Notice that by choosing the value of $\varphi(\varepsilon)\in M$, we determine the value of all other words which do not contain $p_0$ or $p_1$ (as their final states are completely determined). In fact, using the relations it can be deduced that a model is uniquely determined by the sequence of terminal states chosen for $p_i$'s. Explicitly, take $x\in\{0,1\}^\N$ and define $\varphi(x)\in\M(\mathcal{H})$ by $\varphi(x)(\varepsilon) = x_0$ and $\varphi(x)(p_{x_0}p_{x_1}\dots p_{x_n}) = x_{n+1}$. By the argument above, the rest of the values of $\varphi$ are uniquely determined by the sequence. It follows that the map $\varphi\colon\{0,1\}^\N \to \M(\mathcal{H})$ defines a bijection. A finite portion of the graph of a typical model is shown in~\Cref{fig:hyper}.
    
\begin{figure}[H]
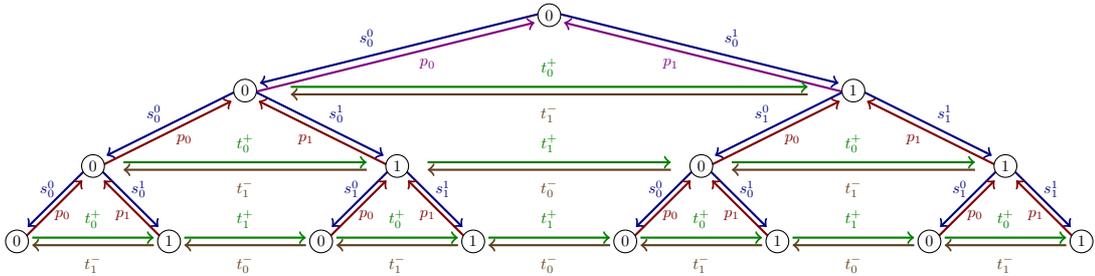

    \centering
    \includestandalone[width=\textwidth]{fig/hyperbolic}
    \caption{A portion of a model graph of $\mathcal{H}$}
    \label{fig:hyper}
\end{figure}
\end{example}


A particular notion for blueprints that we will require later on is that of a quotient.

\begin{defn}
    Consider a blueprint $\Gamma = (M, S, R, \init, \ter)$. We say that the blueprint $\Gamma' = (M, S, R', \init, \ter)$ is a \define{quotient} of $\Gamma$ if for all $u,v\in S^*$, 
    \[\underline{u}_{\Gamma} = \underline{v}_{\Gamma} \ \implies \underline{u}_{\Gamma'} = \underline{v}_{\Gamma'}\]
\end{defn}

Notice that by Definition~\ref{def:model}, if $\Gamma'$ is a quotient of $\Gamma$, every $\Gamma'$-model is a $\Gamma$-model.

\begin{remark}
    We can make an alternative definition of a quotient that resembles that of a group. For a blueprint $\Gamma$ generated by the set $S$, we denote its set of $\Gamma$-equivalence classes by $\underline{\Gamma} = \{\underline{u}_{\Gamma} : u\in S^*\}$. A \define{blueprint morphism} between $\Gamma_1$ and $\Gamma_2$ is a function $f\colon\underline{\Gamma_1}\to \underline{\Gamma_2}$ such that $f(\underline{\varepsilon}_{\Gamma_1})=\underline{\varepsilon}_{\Gamma_2}$ and $f(\underline{uv}_{\Gamma_1}) = f(\underline{u}_{\Gamma_1})f(\underline{v}_{\Gamma_1})$ for all $u,v\in S_1^*$. We make an abuse of notation and write $f:\Gamma_1\to \Gamma_2$. We say such a map is a \define{blueprint isomorphism} when it is bijective. With this definition, a short proof shows that if there exists a surjective blueprint morphism $\pi:\Gamma\to \Omega$, then there exists a quotient $\Gamma'$ of $\Gamma$ that is blueprint isomorphic to $\Omega$. In fact, if $\Gamma = (M,S,\init,\ter,R)$, the quotient is given by $\Gamma' = (M,S,\ter,\init, R')$ where $R' = \{(u,v)\in (S^*)^2 : \pi(\underline{u}_\Gamma) = \pi(\underline{v}_\Gamma)\}$.
\end{remark}

\subsection{Topology and dynamics of the model space}

Let $\Gamma = (M, S, R)$ be a finitely generated blueprint, and $\M(\Gamma)$ its corresponding model space. We endow $\M(\Gamma)$ with the topology induced by the prodiscrete topology on $(M\cup \{\varnothing\})^{S^*}$. In other words, a sequence of maps $(\varphi_n)_{n \in \N}$ in $(M\cup \{\varnothing\})^{S^*}$ converges to $\varphi \in(M\cup \{\varnothing\})^{S^*}$ if for every $w \in S^*$ we have that $\varphi_n(w) = \varphi(w)$ for every large enough $n$. Clearly $\M(\Gamma)$ is closed in $(M\cup \{\varnothing\})^{S^*}$ and thus the induced topology on $\M(\Gamma)$ makes it a compact metrizable space.\\

The space $\M(\Gamma)$ admits a natural partial right monoid action by $S^*$, which is given by \[(\varphi\cdot w)(u) = \varphi(wu),\] and defined only when $w\in \supp(\varphi)$. In this way, $\supp(\varphi\cdot w) = \{u\in S^{*} : wu\in \supp(\varphi)\}$. The \define{orbit} of $\varphi \in \M(\Gamma)$ is given by
\[\orb(\varphi) \coloneqq \{\varphi\cdot w : w\in \supp(\varphi)\}.\]
A model $\varphi$ is called \define{dense} if $\overline{\orb(\varphi)} = \M(\Gamma)$.

\begin{remark}
    In the case when $w\notin \supp(\varphi)$, we get that $\varphi(wu)=\varnothing$ for every $u \in S^*$, thus we may set $\varphi \cdot w$ as the constant $\varnothing$ map. This is not a model, but it makes some definitions (for instance~\Cref{def:morphism}) more natural.
\end{remark}

\begin{defn}
    A blueprint $\Gamma$ is \define{transitive} if there exists a dense model, we say that it is \define{minimal} if every model is dense.
\end{defn} 

\begin{example}
    Consider the blueprint $\Gamma =(M,S,R)$ given by $M = \{\texttt{0},\texttt{1}\}$, $S = \{a,b,c\}$, $R = \varnothing$ and initial and terminal functions given by~\Cref{table:rules_for_boring_example}.
    \begin{table}[ht!]
			\centering
			\begin{tabular}{|c||c|c|c|}
				\hline
				$S$ & $a$ & $b$ & $c$\\
				\hline
				$\init$ & $\texttt{0}$ & $\texttt{1}$ & $\texttt{1}$ \\
				\hline
				$\ter$ & $\{\texttt{0}\}$ & $\{\texttt{1}\}$ & $\{\texttt{1}\}$\\
				\hline
			\end{tabular}
   \caption{Rules for a blueprint which is neither minimal nor transitive.}
        \label{table:rules_for_boring_example}
		\end{table} 

There are precisely two models $\varphi_1,\varphi_2$ for $\Gamma$ which depend upon the value $\varphi(\varepsilon)$. The model with $\varphi_1(\varepsilon)=\texttt{0}$ has support $\{a\}^*$ and is constantly $\texttt{0}$ in its support, whereas the model with $\varphi_2(\varepsilon)=\texttt{1}$ has support $\{b,c\}^*$ and is constantly $\texttt{1}$ in its support. Geometrically, $G(\Gamma, \varphi_1)$ is a one-sided infinite path, whereas $G(\Gamma, \varphi_2)$ is the rooted infinite binary tree. Clearly neither of these two models is dense in $\M(\Gamma)$.
\end{example}

\begin{example}
    Recall the $1$-$2$ tree blueprint from~\Cref{ex:1-2-3}. This blueprint is clearly non-minimal as the model with support $\{s\}^*$ and constantly $\texttt{1}$ in its support is not dense. However, notice that in this blueprint given two models $\varphi_1$ and $\varphi_2$ and $n \in \N$ one can always choose $u \in \supp(\varphi_1)$ of length $n$ and construct a new model $\varphi'$ with $\varphi'(w) = \varphi_1(w)$ and $\varphi(uw) = \varphi_2(w)$ for all words with $|w| \leq n$. Enumerating all possible restrictions and iterating this process one can construct a model with dense orbit, thus this blueprint is transitive.
\end{example}

Clearly blueprints with a single state (that is, representations of finitely presented monoids) are minimal. We shall provide a less obvious example.

\begin{example}
   The hyperbolic tiling blueprint $\mathcal{H}$ from~\Cref{ex:hyperbolic_tilings} is minimal. Recall that we have a bijection $f\colon\{0,1\}^\N \to \M(\mathcal{H})$, where $x$ determines the sequence of $p_i$'s of the model. Consider the odometer map $t\colon\{0,1\}^\N\to \{0,1\}^\N$ which changes the leftmost $0$ of a sequence to a $1$ and turns all $1$s to the left of it to $0$s. It is easy to verify that $f(tx) = f(x)\cdot t^+_{x_0}$ and $f(t^{-1}x) = f(x)\cdot t^-_{x_0}$

   As the $\ZZ$-action induced by $t$ on $\{0,1\}^\N$ is minimal, it follows that the orbit of a model by $\{t_0^+,t_0^-,t_1^+,t_1^-\}^*$ is already dense, thus $\mathcal{H}$ is minimal. 
\end{example}

%
\section{Subshifts on Blueprints}
\label{sec:subshifts}

The notion of subshift is usually defined on a group or monoid as a closed and shift-invariant subspace of the space of all maps to a finite set and can be characterized as the set of configurations in that structure which avoid a list of forbidden patterns. In this section, we will develop an analogous notion for blueprints. Although not technically the same object, related generalizations can be found in~\cite{aubrun2019domino, bartholdi2022monadic, barbieri2016domino, bartholdi2020simulations,arrighi2023graph}. For the remainder of the section we will fix a blueprint $\Gamma = (M,S,R)$ and a finite set $A$ called \define{alphabet} which does not contain the symbol $\varnothing$. As in the case of models, for a function $f\colon X \to Y$ with $\varnothing \in Y$, we denote its support by $\supp(f) = X\setminus f^{-1}(\varnothing)$. \\

A \define{pattern} is a map $p \colon S^* \to (M \times A) \cup \{\varnothing\}$ with finite support. We begin with the definition of subshift for a fixed $\Gamma$-model $\varphi$. 
\begin{defn}
\label{def:phi_subshift}
    Let $\varphi \in \M(\Gamma)$ and let $\mathcal{F}$ be a set of patterns. The $\varphi$-\define{subshift} induced by $\mathcal{F}$  is given by \[ X[\Gamma,\varphi,{\mathcal{F}}] = \{ x \in (A\cup \{\varnothing\}  )^{S^*} : \mbox{ (s1)-(s3) are satisfied }   \},    \]
    where \begin{itemize}
        \item[(s1)] $\supp(x) = \supp(\varphi)$,
        \item[(s2)] For every pair of $\Gamma$-equivalent words $u,v \in \supp(\varphi)$, we have $x(u)=x(v)$,
        \item[(s3)] For every $p \in \mathcal{F}$ and $w \in \supp(\varphi)$ there exists $u \in \supp(p)$ such that $(\varphi(wu),x(wu)) \neq p(u)$.
    \end{itemize}
\end{defn}

\begin{example}
    If we let $\mathcal{F}=\varnothing$ we obtain the following subshift which we call the \define{full $\varphi$-shift} and we denote by $A[\Gamma,\varphi]$. \[ A[\Gamma,\varphi] = \{x \in  (A\cup \{\varnothing\}  )^{S^*} :  \mbox{ (s1) and (s2) are satisfied } \}.   \]
\end{example}

\begin{remark}
    Given a configuration $x$ in a $\varphi$-subshift, condition (s2) ensures that the map $\widehat{x}\colon G(\Gamma,\varphi)\to A$ given by $\widehat{x}(\underline{w}_{\Gamma}) = x(w)$ is well defined, thus a subshift in a model can also be thought as set of colorings of the associated model graph described by a set of forbidden patterns. 
\end{remark}

The word ``subshift'' seems inappropriate for the objects above, as in general the spaces $X[\Gamma,\varphi,\mathcal{F}]$ do not admit any kind of natural shift action which leaves them invariant. However, if we look at the space of pairings $(\varphi,x)$ with $\varphi$ a model and $x \in X[\Gamma,\varphi,\mathcal{F}]$, a partial ``shift'' action naturally appears.

\begin{defn}
\label{def:subshift}
    Given a set of patterns $\mathcal{F}$, the $\Gamma$-\define{subshift} induced by $\mathcal{F}$  is given by \[ X[{\Gamma},\mathcal{F}] = \{ (\varphi,x) : \varphi \in \M(\Gamma), x \in  X[\Gamma, \varphi, \mathcal{F}]\}.    \]

    Similarly, the full $\Gamma$-shift is the space $A[\Gamma] = \{  (\varphi,x) : \varphi \in \M(\Gamma), x \in  A[\Gamma,\varphi]\}$.
\end{defn}

Notice that the space $A[\Gamma]$ is a closed subset of $((M\times A)\cup \{\varnothing\})^{S^*}$ with the prodiscrete topology, and it admits a natural partial right action of $S^*$ where \[ \left((\varphi,x)\cdot w\right)(u) = (\varphi(wu),x(wu))  \mbox{ for every } w \in \supp(\varphi), u \in S^*. \]

\begin{prop}
    A set $X\subset A[\Gamma]$ is a $\Gamma$-subshift for some set of forbidden patterns $\mathcal{F}$ if and only if it is closed and invariant under the partial action of $S^*$.
\end{prop}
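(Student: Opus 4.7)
The plan is to adapt the classical characterization of subshifts on groups and monoids. For the forward implication, starting from $X = X[\Gamma, \mathcal{F}]$, I would check invariance by noting that if a pattern $p \in \mathcal{F}$ fails to occur at every $w \in \supp(\varphi)$ of $(\varphi, x) \in X$, then in particular it fails at every $ww'$ with $w' \in \supp(\varphi \cdot w)$, so condition (s3) transfers to $(\varphi, x) \cdot w$; conditions (s1), (s2) transfer by analogous elementary checks using that $\Gamma$-equivalence is preserved by left concatenation. Closedness in the prodiscrete topology is routine, since $\M(\Gamma)$ is closed in $(M \cup \{\varnothing\})^{S^*}$ and each of (s1)--(s3) only constrains finitely many coordinates at a time (using that each $p \in \mathcal{F}$ has finite support), so the conditions pass to pointwise limits.

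For the converse, suppose $X$ is closed and invariant. The natural candidate is to let $\mathcal{F}$ be the set of patterns that fail to occur at every position of every element of $X$. Then $X \subseteq X[\Gamma, \mathcal{F}]$ holds by definition. For the reverse inclusion I would argue by contradiction: pick $(\varphi, x) \in X[\Gamma, \mathcal{F}] \setminus X$ and, using closedness, a finite $F \subseteq S^*$ (say the set of words of length at most $n$ for $n$ large) such that the cylinder of elements of $A[\Gamma]$ agreeing with $(\varphi, x)$ on $F$ is disjoint from $X$. Then form the pattern $p$ with support $F \cap \supp(\varphi)$ and values $p(u) = (\varphi(u), x(u))$ there. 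Since $p$ occurs at $\varepsilon$ in $(\varphi, x)$, condition (s3) forces $p \notin \mathcal{F}$, so $p$ must occur at some $w$ in some $(\varphi', x') \in X$; invariance then yields $(\varphi', x') \cdot w \in X$.

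The crux and expected main obstacle is to show that this shifted configuration actually lies in the excluded cylinder, i.e.\ agrees with $(\varphi, x)$ on all of $F$, not just on $F \cap \supp(\varphi)$. Patterns can only prescribe non-$\varnothing$ values, whereas the cylinder also prescribes where the support is absent, so an extra argument is required. The fix should come from the consistency axiom of a blueprint: for any $u \in F \setminus \supp(\varphi)$, take the longest prefix $v$ of $u$ lying in $\supp(\varphi)$, so that $u$ factors as $vsu'$ with $\init(s) \neq \varphi(v)$ by consistency of $\varphi$; since $v \in \supp(p)$, the occurrence of $p$ at $w$ gives $(\varphi' \cdot w)(v) = \varphi(v)$, and consistency of $\varphi' \cdot w$ then forces $(\varphi' \cdot w)(vs) = \varnothing$ and hence $(\varphi' \cdot w)(u) = \varnothing$, matching $\varphi(u)$. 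Property (s1) for $(\varphi', x')$ propagates the same conclusion to $x' \cdot w$. Thus $(\varphi', x') \cdot w$ sits in the excluded cylinder, contradicting its disjointness from $X$, and the proposition follows.
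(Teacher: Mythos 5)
Your proof is correct and follows the same broad outline as the paper's: compactness of $A[\Gamma]$ and closedness of $X$ produce cylinder neighborhoods disjoint from $X$, invariance is used to obtain condition (s3), and the subtle step is converting cylinder data into pattern data. Where you go further than the paper is worth noting. A pattern $p$ only prescribes values on $\supp(p)$, so the set $[p]$ is strictly coarser than a genuine cylinder in $((M\times A)\cup\{\varnothing\})^{S^*}$, which can also prescribe $\varnothing$ at chosen coordinates. The paper dismisses this by asserting that "cylinders form a base of the prodiscrete topology" and immediately writing $X = A[\Gamma]\setminus\bigcup_{p\in\mathcal{F}}[p]$; the gap is precisely that $[p]$ carries no $\varnothing$ constraints. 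You identify this as the crux and close it explicitly with the consistency axiom of the blueprint: if $F$ is prefix-closed and two consistent maps agree on $F\cap\supp(\varphi)$, then they also agree on the $\varnothing$-part of $F$, because the longest prefix $v\in\supp(\varphi)$ of a word $u\in F\setminus\supp(\varphi)$ fixes a generator $s$ with $\init(s)\neq\varphi(v)$, forcing $\varnothing$ downstream in any consistent map that agrees on $v$. Your organization is also dual to the paper's: you take $\mathcal{F}$ to be the set of patterns absent from $X$, so $X\subseteq X[\Gamma,\mathcal{F}]$ is immediate and the work goes into $X[\Gamma,\mathcal{F}]\subseteq X$ (by contradiction, using closedness, invariance, and the consistency argument); the paper instead derives $\mathcal{F}$ from a cylinder cover of $A[\Gamma]\setminus X$, making $X[\Gamma,\mathcal{F}]\subseteq X$ immediate and using invariance only for $X\subseteq X[\Gamma,\mathcal{F}]$. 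Both routes use the same ingredients, but your version makes the role of the blueprint's consistency condition visible where the paper leaves it implicit.
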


\begin{proof}
    It is clear by definition that a $\Gamma$-subshift is closed and invariant under the partial action of $S^*$. Conversely, given a pattern $p \colon S^* \to (M\times A) \cup \{\varnothing\}$, define its cylinder by \[ [p] = \{ (\varphi,x) \in ((M\times A)\cup \{\varnothing\})^{S^*} : (\varphi,x)(u) = p(u) \mbox{ for every } u \in \supp(p) \}.\]
    As cylinders form a base of the prodiscrete topology in $((M\times A)\cup \{\varnothing\})^{S^*}$, it follows that there exists a set of patterns $\mathcal{F}$ such that \[  X = A[\Gamma] \cap \left(  ((M\times A)\cup \{\varnothing\})^{S^*}  \setminus  \bigcup_{p \in \mathcal{F}}[p]\right) = A[\Gamma] \setminus \bigcup_{p \in \mathcal{F}}[p]. \]
    In particular, this shows that $X[\Gamma,\mathcal{F}]\subset X$. We claim that $X = X[\Gamma,\mathcal{F}]$ and thus is precisely the $\Gamma$-subshift induced by $\mathcal{F}$. Indeed, let $(\varphi,x) \in X$. As $X\subset A[\Gamma]$, it follows that $\varphi \in \M(\Gamma)$ and that $x$ satisfies conditions (s1) and (s2). It suffices to verify that $x$ satisfies condition (s3). Fix $p \in \mathcal{F}$, as $X$ is invariant under the partial action of $S^*$, it follows that for every $w \in \supp(\varphi)$ we have that $(\varphi,x)\cdot w \in X$ and thus that $(\varphi,x)\cdot w \notin [p]$, in other words, that there exists $u \in \supp(p)$ for which $(\varphi(wu),x(wu))\neq p(u)$. Thus condition (s3) also holds.
\end{proof}

\begin{defn}
    Let $\Gamma$ be a blueprint and $\varphi$ be a $\Gamma$-model. We say a $\varphi$-subshift (resp. a $\Gamma$-subshift) $X$ is of \define{finite type}, which we abbreviate as $\varphi$-SFT (resp. $\Gamma$-SFT), if there exists a finite set $\mathcal{F}$ of forbidden patterns such that $X = X[\Gamma,\varphi, \mathcal{F}]$ (resp. $X = X[\Gamma,\mathcal{F}]$). 
\end{defn}

\begin{example}
\label{ex:hardsquare}
    Consider the alphabet $A = \{ \symb{0},\symb{1}\}$ and the set $\mathcal{F}$ of all patterns with support $\{\varepsilon,s\}$ for some $s \in S$ such that $p(\varepsilon) = (m,\symb{1})$ and $p(s)=(m',\symb{1})$ for some $m,m'\in M$. For a model $\varphi$ the subshift $X[\Gamma,\varphi,\mathcal{F}]$ represents the space of all maps from $G(\Gamma,\varphi)$ to $A$ in such a way that no pair of symbols $\symb{1}$ occur adjacent to each other. We call $X[\Gamma,\FF]$ the \define{hard-square} shift on $\Gamma$. An example of a configuration of this subshift on the hyperbolic tiling model from Example~\ref{ex:hyperbolic_tilings} is shown in Figure~\ref{fig:hyper_square}, where $\symb{1}$ is represented by \begin{tikzpicture}
        \draw[black, thick, fill=rojo] (0,0) circle (0.1); 
    \end{tikzpicture}, and $\symb{0}$ by \begin{tikzpicture}
        \draw[black, thick, fill=azul] (0,0) circle (0.1); 
    \end{tikzpicture}.
    \begin{figure}[H]
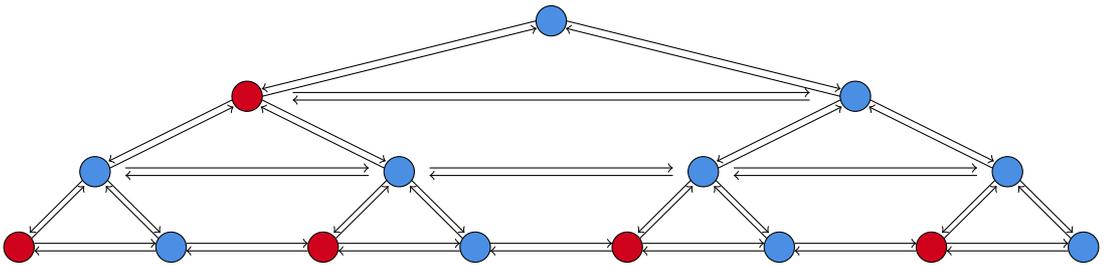

    \centering
    \includestandalone[width=\textwidth]{fig/hard_square}
    \caption{A portion of the hard-square subshift on a model graph of the hyperbolic tiling blueprint $\mathcal{H}$. }
    \label{fig:hyper_square}
\end{figure}
\end{example}

\begin{defn}
    Let $\varphi$ be a $\Gamma$-model. We say a $\varphi$-subshift (resp. a $\Gamma$-subshift) $X$ is a \define{nearest neighbor SFT}, if there exists a finite set $\mathcal{F}$ of forbidden patterns, all of them with support of the form $\{\varepsilon,s\}$ for some $s \in S$ such that $X = X[\Gamma,\varphi, \mathcal{F}]$ (resp. $X = X[\Gamma,\mathcal{F}]$).
\end{defn}

This particular class of SFTs captures the dynamics of every SFT through the following notion of equivalence.

\begin{defn}\label{def:morphism}
    Let $X,Y$ be two $\Gamma$-subshifts. A map $\phi\colon X\to Y$ is said to be a \define{morphism} if it is continuous and for every $w \in S^*$ we have that $\phi((\varphi,x)\cdot w) = \phi(\varphi,x)\cdot w$. Furthermore, if $\phi$ is bijective, we say it is a \define{conjugacy} and that $X$ and $Y$ are \define{topologically conjugate}.
\end{defn}

\begin{remark}
    In all of our applications the morphisms will fix the model and only modify the alphabet, in this case we also have the property that $\phi((\varphi,x)\cdot w)$ is well defined if and only if $\phi(\varphi,x)\cdot w$ is well-defined. This corresponds to what happens in the classical setting of symbolic dynamics on groups. We keep this more general definition mostly because the next proofs work in that setting.
\end{remark}

Morphisms between subshifts on blueprints behave much in the same way as morphisms between subshifts over groups. We say a map $\phi\colon A[\Gamma]\to B[\Gamma]$ is a \define{sliding-block code} if there exists a finite subset $F\Subset S^*$ and a local map $\Phi\colon ((M\times A)\cup \{\varnothing\})^{F}\to((M\times B)\cup \{\varnothing\})$ such that $\phi(\varphi, x)(w) = \Phi\bigl(((\varphi, x)\cdot w)|_{F}\bigr)$. We state a generalization of the classic Curtis-Hedlund-Lyndon theorem for blueprints.

\begin{theorem}
\label{thm:CHL}
    Let $X\subseteq A[\Gamma]$ and $Y\subseteq B[\Gamma]$ be two $\Gamma$-subshifts, and $\phi:X\to Y$ a map. Then, $\phi$ is a morphism if and only if it is a sliding-block code.
\end{theorem}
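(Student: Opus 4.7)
The proof mimics the classical Curtis--Hedlund--Lyndon argument for group shifts, adapted to accommodate the partial action of $S^*$ on $A[\Gamma]$ and the variable supports of models.

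\textbf{Sliding-block code implies morphism.} Suppose $\phi$ has window $F\Subset S^*$ and local rule $\Phi$. Continuity in the prodiscrete topology is immediate, since each output coordinate depends only on the finitely many input coordinates indexed by $wF$. Shift-equivariance reduces to associativity of the partial action: for $w\in\supp(\varphi)$ and $u\in S^*$,
\[
\phi((\varphi,x)\cdot w)(u)=\Phi((((\varphi,x)\cdot w)\cdot u)|_F)=\Phi(((\varphi,x)\cdot wu)|_F)=\phi(\varphi,x)(wu)=(\phi(\varphi,x)\cdot w)(u).
\]

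\textbf{Morphism implies sliding-block code.} Given a morphism $\phi$, continuity at each $(\varphi,x)\in X$ provides a finite set $F_{(\varphi,x)}\Subset S^*$ such that every $(\varphi',x')\in X$ agreeing with $(\varphi,x)$ on $F_{(\varphi,x)}$ satisfies $\phi(\varphi',x')(\varepsilon)=\phi(\varphi,x)(\varepsilon)$. The resulting cylinders cover the compact space $X$; extracting a finite subcover and taking the union of the corresponding $F_{(\varphi,x)}$ (augmented by $\varepsilon$) yields a finite window $F$ for which $\phi(\varphi,x)(\varepsilon)$ is uniformly determined by $(\varphi,x)|_F$. Define $\Phi\colon((M\times A)\cup\{\varnothing\})^F\to(M\times B)\cup\{\varnothing\}$ by setting $\Phi(p)$ to this common value when $p$ is realized as $(\varphi,x)|_F$ for some $(\varphi,x)\in X$, and $\Phi(p)=\varnothing$ otherwise. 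Shift-equivariance then propagates the sliding-block identity to every $w\in\supp(\varphi)$ via
\[
\phi(\varphi,x)(w)=(\phi(\varphi,x)\cdot w)(\varepsilon)=\phi((\varphi,x)\cdot w)(\varepsilon)=\Phi(((\varphi,x)\cdot w)|_F).
\]

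\textbf{The hard part} is reconciling the sliding-block formula at positions $w\notin\supp(\varphi)$. Because $\supp(\varphi)$ is prefix-closed, the extended restriction $((\varphi,x)\cdot w)|_F$ is then the constant $\varnothing$ pattern, which is not the $F$-restriction of any configuration in $X$ (since $\varepsilon\in F$ and any element of $X$ has value in $M\times A$ at $\varepsilon$); hence $\Phi$ sends it to $\varnothing$. The sliding-block identity therefore demands $\phi(\varphi,x)(w)=\varnothing$, i.e.\ that the model component $\psi$ of $\phi(\varphi,x)$ satisfies $\supp(\psi)=\supp(\varphi)$. The inclusion $\supp(\varphi)\subseteq\supp(\psi)$ is automatic from equivariance, since $\phi(\varphi,x)\cdot w$ must be defined whenever $(\varphi,x)\cdot w$ is. Establishing the reverse inclusion is where the blueprint structure plays an essential role not present in the group case: I plan to argue by contradiction, picking a hypothetical $w\in\supp(\psi)\setminus\supp(\varphi)$, writing $w=usv$ with $u$ the longest prefix of $w$ in $\supp(\varphi)$ (so that $\init(s)\neq\varphi(u)$), and tracking how the locally determined value of $\psi$ at $us$ and beyond must violate either the $\Gamma$-consistency of $\psi$ or the uniformity of the local rule $\Phi$ when tested against two configurations of $X$ that agree on the window at $u$ but differ arbitrarily far outside $\supp(\varphi)$. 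This last step is the technical crux; the rest of the argument is a direct translation of the classical proof.
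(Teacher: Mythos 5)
Your two main implications follow the paper's argument closely: for sliding-block $\Rightarrow$ morphism you use the associativity of the partial action, and for morphism $\Rightarrow$ sliding-block you use continuity, compactness of $X$, a finite subcover, and the equivariance chain $\phi(\varphi,x)(w)=(\phi(\varphi,x)\cdot w)(\varepsilon)=\phi((\varphi,x)\cdot w)(\varepsilon)=\Phi(((\varphi,x)\cdot w)|_F)$. That part is sound and is essentially the paper's proof.

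The ``hard part'' you flag is where you depart from the paper, and it is the weak link. First, whether it is needed at all depends on a reading of the definition: the paper only defines $(\varphi,x)\cdot w$ for $w\in\supp(\varphi)$, so the sliding-block identity is (arguably) required to hold only there, and the paper's proof treats it that way. Second, and more importantly: even if one adopts your extended reading where $((\varphi,x)\cdot w)|_F$ is the all-$\varnothing$ pattern for $w\notin\supp(\varphi)$, your proposed resolution is only a sketch (``I plan to argue'') and the contradiction you outline — testing against configurations that ``differ arbitrarily far outside $\supp(\varphi)$'' — does not obviously make sense, since elements of $X$ are identically $\varnothing$ off their model's support, and the local rule $\Phi$ plays no role in the issue. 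The correct resolution is purely structural and much shorter: you already have $\supp(\varphi)\subseteq\supp(\psi)$ from equivariance. If that inclusion were strict, take a minimal $w=us\in\supp(\psi)\setminus\supp(\varphi)$ with $u\in\supp(\varphi)$. Since supports are infinite subtrees (as the paper notes), $u$ has some child $us'\in\supp(\varphi)\subseteq\supp(\psi)$; being in both supports forces $\init(s')=\varphi(u)$ and $\init(s')=\psi(u)$, so $\varphi(u)=\psi(u)$. But then $\init(s)=\psi(u)=\varphi(u)$ would give $us\in\supp(\varphi)$, a contradiction. So $\supp(\psi)=\supp(\varphi)$, and you may safely set $\Phi$ of any pattern that is $\varnothing$ at $\varepsilon$ to be $\varnothing$. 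Replacing your sketch with this observation closes the gap; without it the write-up is incomplete at exactly the step you advertise as the crux.
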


\begin{proof}
For simplicity, we denote $\mathcal{A} = (M\times A)\cup \{\varnothing\}$, and $\mathcal{B} = (M\times B)\cup \{\varnothing\}$. Suppose $\phi\colon X\to Y$ is a morphism. Then, because the projection map $\pi:\mathcal{B}[\Gamma]\to \mathcal{B}$ defined by $\pi(\varphi, x) = (\varphi(\varepsilon), x(\varepsilon))$ is continuous for the prodiscrete topology, the composition $\pi\circ\phi$ is also continuous. Therefore, for each $(\varphi,x)$ there exists finite subset $P_{(\varphi,x)}$ that determines the value $\pi\circ\phi(\varphi,x)$, that is, $(\varphi',x')|_{P_{(\varphi,x)}} = (\varphi,x)|_{P_{(\varphi,x)}}$ implies $\pi\circ\phi(\varphi',x') = \pi\circ\phi(\varphi,x)$. Consider the open sets
\[U(\varphi,x, P_{(\varphi,x)}) = \{(\varphi',x') : (\varphi', x')|_{P_{(\varphi,x)}} = (\varphi,x)|_{P_{(\varphi,x)}}\},\]
for all $(\varphi,x)\in X$. Evidently, every pair $(\varphi, x)$ is contained in $U(\varphi,x,P_{(\varphi,x)})$. Therefore, the union of all these open sets covers $A[\Gamma]$. As $X$ is compact, we can extract a finite subcover $\{U(\varphi_i, x_i, P_i)\}_{i=1}^n$ with $P_i = P_{(\varphi_i,x_i)}$ . Let $F$ be the union of all the $P_i$. Then, if $(\varphi, x)$ and $(\varphi', x')$ coincide on $F$, their images coincide, that is, $\pi\circ\phi(\varphi, x) = \pi\circ\phi(\varphi',x')$. We can therefore define a local function from the patterns $\{(\varphi, x)|_F : (\varphi, x)\in X\}$ to $\mathcal{B}$ and extend it arbitrarily to a map $\Phi\colon \mathcal{A}^F\to \mathcal{B}$. Then, because $\phi$ is a morphism,
\[\phi(\varphi, x)(w) = (\phi(\varphi, x)\cdot w)(\varepsilon) = \phi((\varphi, x)\cdot w)(\varepsilon) = \Phi\bigl(((\varphi, x)\cdot w)|_{F}\bigr).\]

Conversely, suppose $\Phi\colon\mathcal{A}^{F}\to\mathcal{B}$ is a local map such that $\phi\colon X\to Y$ is given by $\phi(\varphi, x)(w) = \Phi\bigl(((\varphi, x)\cdot w)|_{F}\bigr)$. Then, for $(\varphi, x)\in X$ and $v,w\in S^*$, 
$$(\phi(\varphi, x)\cdot v)(w) = \phi(\varphi,x)(vw) = \Phi\bigl(((\varphi,x)\cdot vw)|_{F}\bigr) = \phi((\varphi,x)\cdot v)(w).$$
It remains to show that $\phi$ is continuous. Let $p$ be a pattern on $\mathcal{B}$. By definition it can be decomposed as a finite intersection of cylinders of the form $[\beta]_w = \{(\varphi,x)\in Y : (\varphi(w),x(w))=\beta\}$, with $\beta\in \mathcal{B}$. For each of these cylinders,
$$\phi^{-1}([\beta]_w) = \{(\varphi, x)\in X : \Phi\bigl(((\varphi, x)\cdot w)|_F\bigr) = \beta\},$$
which is an open set, thus $\phi^{-1}([p])$ is also open. As cylinders form a basis of the topology, it follows that $\phi$ is continuous.
\end{proof}

\begin{prop}
    Every SFT is topologically conjugate to a nearest neighbor SFT.
\end{prop}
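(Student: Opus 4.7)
The plan is to apply the classical ``higher block'' recoding adapted to blueprints. Given $X = X[\Gamma, \FF]$ with $\FF$ a finite set of forbidden patterns, let $N$ be such that $\supp(p) \subseteq F_N \coloneqq \{w \in S^* : |w| \leq N\}$ for every $p \in \FF$. The idea is to enlarge the alphabet so that each vertex records the entire window of radius $N$ of the original configuration; this turns every forbidden pattern from $\FF$ into a single-site constraint, while geometric compatibility between overlapping windows becomes a nearest-neighbor constraint.

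Concretely, I would set $B = ((M \times A) \cup \{\varnothing\})^{F_N}$ and define $\phi \colon X \to B[\Gamma]$ by $\phi(\varphi, x)(w) = (\varphi(w), y_w)$ for $w \in \supp(\varphi)$ and $\varnothing$ otherwise, where $y_w(u) = (\varphi(wu), x(wu))$ when $wu \in \supp(\varphi)$ and $y_w(u) = \varnothing$ otherwise. By Theorem~\ref{thm:CHL}, $\phi$ is a morphism since it is a sliding-block code with window $F_N$. Its inverse on the image is the single-site map $(\varphi, y) \mapsto (\varphi, x)$ extracting $x(w)$ as the $A$-component of $y_w(\varepsilon)$, also a sliding-block code. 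Hence $\phi$ is a conjugacy onto its image $Y \coloneqq \phi(X)$.

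Next I would describe $Y$ by nearest-neighbor forbidden patterns. Three finite families of constraints cut out $Y$ inside $B[\Gamma]$: (i) self-consistency, that the $M$-component of $y_w(\varepsilon)$ equals $\varphi(w)$; (ii) overlap compatibility, $y_w(su) = y_{ws}(u)$ for every $s \in S$ and $u \in F_{N-1}$; and (iii) no $p \in \FF$ matches $y_w$ on its support. Family (ii) is directly a finite set of pair patterns with support $\{\varepsilon, s\}$, while (i) and (iii) are single-site.

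The main remaining step is to rewrite the single-site constraints (i) and (iii) as strict pair patterns of support $\{\varepsilon, s\}$. For any state $m \in M$ admitting at least one outgoing generator $s$, a single-site constraint at a vertex of state $m$ expands into a finite list of pair patterns by letting the $s$-coordinate range over all of $(M \times B) \cup \{\varnothing\}$. For dead-end states the single-site constraint can be absorbed upstream by trimming $B$ to the windows that satisfy it, a choice compatible with $\phi$. This exhibits $Y$ as a nearest-neighbor $\Gamma$-SFT topologically conjugate to $X$.
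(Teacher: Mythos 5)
Your proposal takes essentially the same route as the paper: a higher-block recoding with the window alphabet $B$, the same map $\phi$, and the same three families of constraints (state consistency of the recorded window, overlap compatibility along each generator, and avoidance of the original forbidden patterns). The paper handles the two single-site families slightly differently from what you sketch: it trims $B$ at the outset to exclude windows that match some $p \in \mathcal{F}$ (and that have $\alpha(\varepsilon)=\varnothing$), and it expresses state consistency entirely as pair patterns via two conditions, one reading the state at $\varepsilon$ and one at $s$, so that a sink vertex with an incoming edge is still policed from upstream. Your alternative of expanding single-site constraints into the family $\{q : q(\varepsilon)=(m,b),\, q(s)=\cdot\}$ is fine for states $m$ with an outgoing generator and amounts to the same thing; one small slip is writing that the $s$-coordinate may range over $(M\times B)\cup\{\varnothing\}$, but a nearest-neighbor pattern with support exactly $\{\varepsilon,s\}$ cannot have $q(s)=\varnothing$, so that value should be dropped (harmless since it could never match anyway). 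Your remark about ``trimming $B$'' for dead-end states is the right move for the $\mathcal{F}$-avoidance constraint but would not by itself enforce state consistency at an isolated sink root, since the alphabet is shared across all states; this degenerate corner is treated no more carefully in the paper than in your sketch, so it is not a point of divergence. Net, same argument, minor bookkeeping differences.
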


The proof of this proposition goes along the same lines as the proof of this result for subshifts over groups.

\begin{proof}
Let $X$ be a $\Gamma$-SFT defined by a set of forbidden patterns $\Fo$. Let \[N=\max_{p\in\Fo}\max_{w\in\supp(p)}|w|.\] Let $F = \{w \in S^* : |w|\leq N\}$. We define the alphabet $$B = \{\alpha\in ((M\times A)\cup \{\varnothing\})^{F} : \alpha(\varepsilon)\neq \varnothing \mbox{ and } \forall p\in\Fo, \exists w \in \supp(p), \alpha(w) \neq p(w)\}.$$

For $\alpha \in B$ and $w \in \supp(\alpha)$ we use the notation $\alpha(w) = (\alpha_M(w),\alpha_A(w))\in M \times A$.

We define $\mathcal{G}$ as the set of nearest neighbor patterns $q$ of support $\{\varepsilon,s\}$ over $(M\times B)$, with $s\in S$, such that if we write $q(\varepsilon) = (m,\alpha)$ and $q(s)=(m',\alpha')$ then either:
\begin{enumerate}
    \item We have $\alpha_M(\epsilon)\neq m$.
    \item We have $\alpha'_M(\epsilon) \neq m'$.
    \item There exists $w\in S^*$ with $|w|\leq N-1$ and $\alpha(sw)\neq \alpha'(w)$.
\end{enumerate}

In other words, these are all patterns where either the states are not consistent with what is encoded by their alphabet coordinates, or such that the overlap between their alphabet coordinates does not match.

 Let $Y$ be the nearest neighbor $\Gamma$-SFT defined by $\mathcal{G}$. Consider the map $\phi \colon X \to ((M\times B)\cup\{\varnothing\})^{S^*}$ given by \[  \phi(\varphi,x)(w) = \begin{cases} (\varphi(w), ((\varphi,x)\cdot w)|_{F}) & \mbox{if } w \in \supp(\varphi)\\
 (\varnothing,\varnothing) & \mbox{otherwise}.
 \end{cases}\]
 
 The map $\phi$ is a morphism by~\Cref{thm:CHL}. It is also clear that $\phi$ is injective and preserves the first coordinate. Moreover, it is clear by the definition that $\phi(X) \subset B[\Gamma]$. Finally, a direct argument shows that no forbidden patterns from $\mathcal{G}$ can occur in $\phi(\varphi,x)$ for any $(\varphi,x) \in X$ and thus we conclude that $\phi(X)\subset Y$.

 It only remains to show that for every $(\varphi,y) \in Y$ there is $(\varphi,x) \in X$ such that $\phi(\varphi,x)=(\varphi,y)$.  Let us fix $(\varphi,y) \in Y$, for $w\in \supp(\varphi)$ denote $y(w)=(m_w,\alpha_w)$. We define $x\in (A\cup \{\varnothing\})^{S^*}$ as follows \[ x(w) = \begin{cases}
   (\alpha_w)_A(\varepsilon)   & \mbox{if } w \in \supp(\varphi).\\
     \varnothing & \mbox{otherwise}.
 \end{cases}  \]

 It is clear that $x$ satisfies conditions (s1) and (s2) of~\Cref{def:phi_subshift} and thus $(\varphi,x)\in A[\Gamma]$. Let $(\varphi,z)=\phi(\varphi,x)$ and take $w \in \supp(\varphi)$, then by definition we have $z(w) = ((\varphi,x)\cdot w)|_{F}$, in other words, for every $u \in F$, \[z(w)(u) = (\varphi(wu),  x(wu)) = (\varphi(wu),  (\alpha_{wu})_A(\varepsilon)). \]

 As no forbidden patterns from $\mathcal{G}$ occur in $y$, one obtains that $m_w = \varphi(w)$ for all $w$, and also one inductively deduces that \[ (\alpha_{w})_A(u) = (\alpha_{wu})_A(\varepsilon). \]
 From where one obtains that $z = y$. 

 We finally check that $(\varphi,x) \in X$. Suppose that $(\varphi,x)\notin X$, then there exists $p \in \FF$ and $w \in \supp(\varphi)$ with $(\varphi,x)\cdot w \in [p]$. In particular, if we take $\alpha = ((\varphi,x)\cdot w)|_F$, we would have that $\alpha(\varepsilon)=(\varphi(w),x(w))\neq \varnothing$ and for all $w \in \supp(p)$, $\alpha(w)=p(w)$, from where we get that $y_w = ((\varphi,x)\cdot w)|_F \notin M \times B$, a contradiction with the assumption that $y \in Y$.\end{proof}

 \begin{remark}
     In the definition of morphism, one might go further and consider maps between subshifts $X\subseteq A[\Gamma]$ and $Y\subseteq B[\Gamma']$ on different blueprints $\Gamma$ and $\Gamma'$ with generators $S$ and $S'$ respectively. This generalized morphism would be determined by maps $\nu \colon S^* \to S'^*$ and 
     $\phi\colon X \to Y$ which satisfy that \[ \phi((\varphi,x)\cdot w) = \phi((\varphi,x))\cdot \nu(w) \mbox{ for every } (\varphi,x) \in X \mbox{ and } w\in S^*.   \]
     With this definition, one would be able to obtain any nearest neighbor $\Gamma$-SFT as a factor of one on a blueprint $\Gamma'$ that has trivial alphabet by setting $M' = M \times A$ as the set of states of $\Gamma'$ and consider for every pair $(s,a)\in S\times A$ a generator $s'$ with $\init(s')=(m,a)$ and such that $\ter(s')$ consists on all pairs $(m',a')$ with $m'\in \ter(s)$ and which do not create a forbidden pattern. We will not make use of this definition anywhere in the article.
 \end{remark}

\subsection{Computability for subshifts on blueprints}

Because we want to study the domino problem on blueprints, as well as their Medvedev degrees, in this section we give some preliminary notions from computability theory that are necessary for the rest of the article. We refer the reader to~\cite{sipser1996introduction,soare2016turing} for an in depth introduction. We say a language $L\subseteq A^*$ is
    \begin{itemize}
        \item \define{recursively enumerable} if there exists a Turing machine that accepts $w\in A^*$ if and only if $w\in L$,
        \item  \define{decidable} if there exists a Turing machine that halts on all inputs, accepts $w\in A^*$ when $w\in L$, and rejects when $w\notin L$.
    \end{itemize}
Similarly, we can define a notion of computability for functions. We say a function $f\colon A^*\to B^*$ is \define{total computable} if there exists a Turing machine that halts on all inputs $w\in A^*$ leaving $f(w)$ on the tape. To compare the decidability of different decision problems, we use the following notion of reduction.
\begin{defn}
$L$ \define{many-one reduces} to $L'\subseteq B^*$, denoted $L\leq_m L'$, if there exists a total computable function $f\colon A^*\to B^*$ such that $w\in L$ if and only if $f(w)\in L'$ for every $w$. We say $L$ is \define{many-one equivalent} to $L'$ if both $L\leq_m L'$ and $L'\leq_m L$.
\end{defn}

We remark that if $L \leq_m L'$ and $L'$ is recursively enumerable (resp. decidable), then so is $L$.\\

In this work, we want to codify the space of patterns as a formal language. Because patterns have finite support, given a pattern $p\colon S^*\to (M\times A)\cup\{\varnothing\}$, we can describe it as a finite set of tuples $(p(w), w)$ with $w\in\supp(p)$. Then, a set of forbidden patterns $\Fo$ corresponds to a union of sets of the form $\{(m_i,a_i,w_i)\}_{i\in I}$ where $I$ is finite, $(m_i,a_i) \in M \times A$, and $w_i\in S^*$.

\begin{defn}
    Let $\Gamma$ be a blueprint. We say a $\Gamma$-subshift $X$ is \define{effective}, if there exists a recursively enumerable set of forbidden patterns $\mathcal{F}$ such that $X = X[\Gamma,\mathcal{F}]$. 
\end{defn}

Lastly, we say an object $u$ can be \define{effectively constructed} from another object $t$, if there is a total computable map which having on input a description of $t$ produces a description of $u$. We will use this terminology later on to say that there is an algorithm that taking on input a description of a finite set of patterns $\FF$ and a positive integer $N$, produces a new finite set of patterns $\FF'$.

%
\section{Quasi-isometries between finitely presented blueprints}
\label{sec:quasi-isometries}

We now move on to our main result on the invariance of subshift properties by quasi-isometries. With this objective in mind and to make use of quasi-isometries, we must first understand the geometry of blueprints and their models. We do this through the notion of quasi-metric spaces.

\subsection{Quasi-metrics and Quasi-isometries}

A quasi-metric space is a tuple $(X,\rho)$ where $X$ is a set and $\rho$ is a quasi-metric, that is, a map $\rho \colon X \times X \to \R_{\geq 0}$ which satisfies all the assumptions of a metric excepting symmetry. In a quasi-metric space $(X,\rho)$ we think of $\rho(x,y)$ as the distance from $x$ to $y$, which can be different from the distance $\rho(y,x)$ from $y$ to $x$. For more information on quasi-metrics we refer the reader to~\cite{kelly1963bitopological,wilson1931quasi}. 

\begin{example}
    Let $(V,E)$ be a strongly connected directed graph. A natural quasi-metric $\rho$ on $V$ is given by the shortest directed path between the vertices, namely \[\rho(x,y) = \min\{ n \in \NN: \mbox{ there is } (v_i)_{i=0}^n \mbox{ with } v_0 = x, v_n = y \mbox{ and } (v_i,v_{i+1})\in E \}.\]
\end{example}

To make use of quasi-metrics for blueprints, we will say a blueprint $\Gamma$ is \define{strongly connected} if for every model $\varphi\in \mathcal{M}(\Gamma)$, its model graph $G(\Gamma, \varphi)$ is strongly connected, that is, if there is a path between every pair of vertices.\\

Let $(X, \rho_X)$ and $(Y, \rho_Y)$ be two quasi-metric spaces. We say a function $f\colon X\to Y$ is a \define{quasi-isometry} if there exists constants $C, D\geq 0$ and $\lambda\geq1$ such that
\begin{enumerate}
    \item $f$ is a \define{quasi-isometric embedding}: for all $x,y\in X$
    \[\frac{1}{\lambda}\rho_X(x,y) - C \leq \rho_Y(f(x), f(y))\leq \lambda \rho_X(x,y) + C,\]
    \item $f$ is \define{relatively dense}: for all $z\in Y$ there exists $x\in X$ such that 
    $$\max\{\rho_Y(z, f(x)),\rho_Y(f(x),z)\}\leq D.$$
\end{enumerate}

If there exists a quasi-isometry between $(X, \rho_X)$ and $(Y, \rho_Y)$, we say they are quasi-isometric quasi-metric spaces.

\begin{remark}
\label{rem:Nto1}
Consider two strongly connected directed graphs $G_1=(V_1,E_1)$ and $G_2=(V_2,E_2)$ with their natural quasi-metrics $\rho_1$ and $\rho_2$ respectively, and suppose that both $G_1$ and $G_2$ have uniformly bounded degree. Let $f\colon V_1\to V_2$ be a quasi-isometry given by constants $C,D,\lambda$ as above. If $u,v \in V_1$ are such that $f(u)=f(v)$, then it follows that both $\rho_1(u,v)$ and $\rho_1(v,u)$ are bounded by $\lambda C$. In particular, as the degree of $G_1$ is uniformly bounded, it follows that there is $M \in \NN$ such that $f$ is at most $M$-to-$1$. Similarly, if $x,y \in V_1$ are such that $\rho_1(x,y) = 1$, then $\rho_2(f(x),f(y)) \leq C+\lambda$. It follows that in this bounded degree case we may always choose a single large enough positive integer $N$ such that the image of adjacent vertices lie at distance $N$, and the map is at most $N$-to-$1$.
\end{remark}

\begin{defn}
    Two strongly connected blueprints $\Gamma_1,\Gamma_2$ are \define{quasi-isometric} if for all $\varphi_1\in\M(\Gamma_1)$ and $\varphi_2\in\M(\Gamma_2)$, the model graph $G(\Gamma_1, \varphi_1)$ is quasi-isometric to $G(\Gamma_2, \varphi_2)$.
\end{defn}

\begin{remark} If $\Gamma_1$ and $\Gamma_2$ are quasi-isometric, then within each blueprint all model graphs are quasi-isometric to each other, i.e. the model space of each blueprint is contained in a unique quasi-isometry class.
\end{remark}

\begin{example} Every model graph of the hyperbolic tiling blueprint from Example~\ref{ex:hyperbolic_tilings} is quasi-isometric to the hyperbolic plane, and therefore every two model graphs are quasi-isometric. 
\end{example}


\subsection{Encoding quasi-isometries through subshifts}

As stated above, our goal is to construct a subshift that encodes bounded-to-1 quasi-isometries between two finitely presented blueprints, that additionally allows us to pass subshifts from one blueprint to the other in a way that preserves some aspects of the subshift's dynamics. This is a generalization of Cohen's construction~\cite{cohen2017large} and takes elements from adaptations of that original construction~\cite{barbieri2021groups,barbieri2024medvedev}. \\

Consider two finitely presented  strongly connected blueprints $\Gamma_1$ and $\Gamma_2$, an alphabet $A$, and a set of forbidden patterns $\Fo$ for $\Gamma_2$. The objective is to construct a $\Gamma_1$ subshift $\QI(\Fo, N)$, depending on both $\Fo$ and a positive integer $N$, such that each configuration encodes a configuration of $X[\Gamma_2, \Fo]$ through a quasi-isometry from a model of $\Gamma_2$ to a model of $\Gamma_1$ that is at most $N$-to-1. Recall from Remark~\ref{rem:Nto1}, that every quasi-isometry between strongly connected uniformly bounded directed graphs is of this form for some $N$. 

The alphabet of this subshift, which will be later denoted by $B$, is made up of $N$-tuples $B_1\times\dots\times B_N$ each encoding the information of one of the possible $N$ pre-images of a point under some quasi-isometry. The information contained in each letter $b\in B_i$ for every $i\in\{1,\dots, N\}$ is either the symbol $*$ representing the fact that there is no pre-image being encoded, or the following information:
\begin{itemize}
    \item a state from $M_2$,
    \item a letter from $A$,
    \item a function from $S_2$ to $S_1^{\leq2N}$ which tells us where to move in $\Gamma_1$ if we want to move by a given generator in $\Gamma_2$ (notice that the bound $2N$ is given by the quasi-isometry),
    \item a function from $S_2$ to $\{1,\dots, N\}$ which tells us to which of the $N$ pre-images we arrive if we move by a given generator.
\end{itemize}
This way, a letter $b\in B$ encodes the image of the neighborhoods of each of the pre-images that fall on the point. The forbidden patterns of $\QI(\Fo,N)$ are given by conditions \textbf{C1} to \textbf{C5} below, which are constructed to make sure that configurations are actually coding quasi-isometries, and that the relations of $\Gamma_2$ are respected. It is here that the hypothesis of finite presentability is key to make sure $\QI(\Fo, N)$ is an SFT when $\Fo$ is finite, as we must code each relation of $\Gamma_2$ as a forbidden pattern.\\

With all the information above, we want to be able to take a configuration $(\varphi_1, x)\in \QI(\Fo, N)$, and starting from $\varepsilon$ extract a configuration $(\varphi_2, x)\in X[\Gamma_2, \Fo]$ by following the encoding of the shift. But, $\QI(\Fo, N)$ contains configurations where no pre-image is coded at $\varepsilon$. Nevertheless, because of the relative density of quasi-isometries, we know there must be a point coding a pre-image at distance at most $N$ of the origin. We therefore introduce a set $\QI'(\Fo,N)\subseteq \QI(\Fo,N)\times \{1,\dots,N\}$ of all configurations $(\varphi, x)$ and indices $i\in \{1,\dots,N\}$ such that $(\varphi(\varepsilon), x(\varepsilon))_i\neq *$, i.e. that code some pre-image at the origin. Furthermore, we define a function $\theta\colon\QI(\Fo, N)\to (S_1)^{\leq N}\times \{1,\dots,N\}$ that given a configuration $(\varphi, x)$, gives a word $w$ and an index $i$ such that $((\varphi,x)\cdot w, i)\in\QI'(\Fo, N)$.

We also introduce a function $\gamma\colon \QI'(\Fo,N)\to X[\Gamma_2, \Fo]$ that recovers the pre-image of the encoded configuration from a configuration of $\QI'(\Fo, N)$. Further still, we link the dynamics of the two subshifts through the map $\mu\colon \QI'(\Fo,N)\times (S_2)^*\to (S_1)^*\times \{1,\dots,N\}$ which tells us by which word of $(S_1)^*$ we must shift a configuration $(\varphi, x)$ (such that $(\varphi, x,i)\in\QI'(\Fo,N)$ for some $i$) and which index we must use to obtain the value of the $\gamma(\varphi, x,i)$ at a word from $(S_2)^*$ in its support.\\

We summarize the properties of these three maps in the following definition.
\begin{defn}
\label{def:coding}
    Consider two finitely presented  strongly connected blueprints $\Gamma_1$ and $\Gamma_2$, a set of forbidden patterns $\Fo$ for $\Gamma_2$, and a positive integer $N$. We say a $\Gamma_1$-subshift $X$ \define{codes $\Fo$ through quasi-isometries for $N$}, if there exist  $Q\subseteq X\times\{1, ..., N\}$ and computable maps $\theta\colon X \to (S_1)^{\leq N} \times \{1,...,N\}$, $\mu\colon Q\times (S_2)^*\to (S_1)^* \times \{1,\dots,N\}$, and $\gamma\colon Q\to X[\Gamma_2, \Fo]$ such that
\begin{enumerate}
    \item For all $(\varphi, x)\in X$, if $\theta(\varphi, x)=(w,i)$, then $(\varphi\cdot w, x\cdot w, i)\in Q$
    
    \item For all $(\varphi, x, i)\in Q$, and all $u \in \supp(\gamma(\varphi, x, i))$, if we let $(v,j) = \mu(\varphi,x,i,u)$, then
        \[\gamma(\varphi, x, i)\cdot u = \gamma(\varphi\cdot v, x\cdot v, j).\]

        \item For all $(\varphi, x, i)\in Q$ and $(w,j)\in (S_1)^*\times \{1,...,N\}$ such that $(\varphi\cdot w, x\cdot w, j)\in Q$, there exists $u\in (S_2)^*$ such that $\mu(\varphi,x,i, u)=(w,j)$.
    \end{enumerate}
\end{defn}

Following the aforementioned notion, we obtain the following result.

\begin{theorem}
\label{thm:QI}
Let $\Gamma_1,\Gamma_2$ be finitely presented and strongly connected blueprints. For each positive integer $N$ and set $\FF$ of forbidden patterns for $\Gamma_2$, there is a set of forbidden patterns $\Fo'=\Fo'(\FF,N)$ which defines a $\Gamma_1$-subshift $\QI(\FF,N)\subset B[\Gamma_1]$ such that the following properties hold:

\begin{enumerate}
    \item $\FF'$ can be effectively constructed from $N$ and $\FF$. Furthermore, $\Fo'$ is finite (resp. effective) if and only if $\FF$ is finite (resp. effective).
    \item If there exist models $\varphi_1\in\M(\Gamma_1)$, $\varphi_2\in\M(\Gamma_2)$ such that $G(\Gamma_2,\varphi_2)$ is quasi-isometric to $G(\Gamma_1, \varphi_1)$, then there exists $N \geq 1$ such that for any $\FF$ such that $X[\Gamma_2,\varphi_2,\Fo]\neq\varnothing$, then there exists $x\in B[\Gamma_1,\varphi_1]$ such that $(\varphi_1,x) \in \QI(\Fo,N)$. 
    \item If $\QI(\Fo,N)$ is non-empty for some $N \geq 1$ and $\Fo$, then it codes $\Fo$ through quasi-isometries for $N$. Additionally, there exists a quotient $\Gamma'_2$ of $\Gamma_2$, and models $\varphi_1\in\M(\Gamma_1)$, $\varphi_2\in\M(\Gamma'_2)\subseteq\M(\Gamma_2)$ such that $G(\Gamma_2',\varphi_2)$ is quasi-isometric to $G(\Gamma_1, \varphi_1)$, and $X[\Gamma_2,\varphi_2,\Fo]\neq\varnothing$.
    \end{enumerate}
\end{theorem}

Let us write $\Gamma_1 = (M_1, S_1, R_1)$, $\Gamma_2 = (M_2, S_2, R_2)$, and $I = \{1,\dots,N\}$. Fix an alphabet $A$ and a set of forbidden patterns $\FF$ for $\Gamma_2$ over $A$. Consider the alphabet 

    \[  \widehat{B} = B_1 \times \dots \times B_N, \]
   
where, $B_i = \{\ast\}\cup \left( M_2 \times A \times \partial P \times \partial I \right)$ for every $ i \in I$, with

     \[ \partial P = \left( (S_1)^{\leq 2N} \cup \{ \diamond \}  \right)^{S_2} \mbox{ and } \partial I = \left(I \cup \{ \diamond \}  \right)^{S_2}.  \]


Denote elements of $\widehat{B}$ by $b = (b_1,\dots,b_N)$. If $b_i \neq \ast$, we write $M_2(b_{i})$, $A(b_{i})$, $\partial P (b_{i})$ and $\partial I(b_{i})$ for its projection to each of its coordinates. The alphabet of $\QI(\Fo,N)$ is the set $B$ of all elements $(b_1,\dots,b_n)\in \widehat{B}$ which satisfy condition \textbf{C0} below.\\
 

\noindent
\textbf{Condition C0:} (state consistency)\\
For every $i \in I$, $s \in S_2$ and $b_i\in B_i$ with $b_i \neq \ast$ we have that:
\[\init(s) = M_2(b_i) \iff \partial P(b_{i})(s)\neq \diamond \iff \partial I(b_{i})(s) \neq \diamond\]

We define the $\Gamma_1$-subshift $\QI(\Fo,N)\subset B[\Gamma_1]$ as the space of configurations $(\varphi,q)$ which satisfy conditions \textbf{C1}--\textbf{C5} given below.\\

\noindent
\textbf{Condition C1:} (density condition) \\
There exists $w,w' \in (S_1)^{\leq N}$ and $i \in I$ such that $w\in \supp(x)$, $x(w)_i \neq \ast$ and $\underline{ww'}_{\Gamma_1}=\underline{\varepsilon}_{\Gamma_1}$.\\

This condition codes the relative density condition of a quasi-isometry, that is, for every $\varphi\in \mathcal{M}(\Gamma_1)$ there exists $w\in (S_1)^{\leq N}$ and an index $i \in I$ which is in the image of a quasi-isometry ($x(w)_i\neq \ast$), and furthermore there is a short path from it to the starting point ($\exists w'\in (S_1)^{\leq N}: \underline{ww'}_{\Gamma_1}=\underline{\varepsilon}_{\Gamma_1}$).\\

\noindent
\textbf{Condition C2:} (partial action)\\
Suppose $\varepsilon\in \supp(x)$ and $i \in I$ is such that $x(\varepsilon)_i \neq \ast$. For every $s \in S_2$ with $\init(s) = M_2(x(\varepsilon)_{i})$ denote $u = \partial P(x(\varepsilon)_{i})(s)$ and $j = \partial I(x(\varepsilon)_{i})(s)$. We impose that $u \in \supp(x)$, $x(u)_{j} \neq \ast$ and $M_2(x(u)_{j}) \in \ter(s)$.\\

With the previous condition, we define a partial action from $S_2^*$ on pairs $(\varphi, x, i)$ where $(\varphi, x)\in B[\Gamma_1]$ satisfies condition \textbf{C2} and $i\in I$ is such that $x(\varepsilon)_i\neq\ast$. For $s\in S_2$ such that $\init(s) = M_2(x(\varepsilon)_i)$, this action is defined as
$$(\varphi, x, i) \circ s = \left(\varphi\cdot \partial P(x(\varepsilon)_{i})(s),\ x\cdot \partial P(x(\varepsilon)_{i})(s),\ \partial I(x(\varepsilon)_{i})(s)\right),$$
By iteration, this induces a partial action of $S_2^*$. For ${\xi}=(\varphi, x,i)$ as above and $u \in S_2^*$ such that $\xi \circ u$ is defined and $(\varphi, x, i)\circ u = (\varphi\cdot w_u, x\cdot w_u, j_u)$, we shall use the notation \begin{equation}\label{eqn:movind}
    (\mov_{\xi}(u),\ind_{\xi}(u)) = (w_u,j_u).
\end{equation}

\noindent
\textbf{Condition C3:} (reachability) \\
Let $\xi=(\varphi,x,i)$ and $v\in (S_1)^{\leq 2N +1}$. If both $\varepsilon,v\in \supp(x)$ and there are $i, j\in I$ with $x(\varepsilon)_{i}\neq\ast$, $x(v)_{j}\neq\ast$, then there exists $w\in (S_2)^{\leq N(3N+1)}$ such that $(\varphi, x, i) \circ w$ is defined and 
\[ (\underline{\mov_{\xi}(w)}_{\Gamma_1},\ind_{\xi}(w)) = (\underline{v}_{\Gamma_1},j). \]

\noindent
\textbf{Condition C4:} (relations)\\
Let $\xi=(\varphi,x,i)$ such that $x(\varepsilon)_i\neq \ast$. For every relation $(u,v) \in R_2$ if both $(\varphi,x,i)\circ u$ and $(\varphi,x,i)\circ v$ are defined, then 
\[\underline{\mov_{\xi}(u)}_{\Gamma_1} = \underline{\mov_{\xi}(v)}_{\Gamma_1} \mbox{ and } \ind_{\xi}(u) = \ind_{\xi}(v).\]

Finally, Let $\xi=(\varphi,x,i)$ such that $x(\varepsilon)_i\neq \ast$. We define $\gamma(\xi) = (\widehat{\varphi}, \widehat{x})\in A[\Gamma_2]$ as
\begin{align}\label{eq:induced_configurations_QI}
\begin{split}
    \widehat{\varphi}(u) & = \begin{cases}
    M_2(x(\mov_{\xi}(u))_{\ind_{\xi}(u)}) & \mbox{if } (\varphi, x, i)\circ u \mbox{ is defined},\\
    \varnothing  & \mbox{otherwise},
\end{cases}\\
\widehat{x}(u) & = \begin{cases}
    A(x(\mov_{\xi}(u))_{\ind_{\xi}(u)}) & \mbox{if } (\varphi, x, i)\circ u \mbox{ is defined},\\
    \varnothing & \mbox{otherwise},
\end{cases}
\end{split}
\end{align}
 for every $u \in S_2^*$. \\

\noindent
\textbf{Condition C5:} (avoidance of forbidden patterns)\\
Let $\xi=(\varphi,x,i)$ such that $x(\varepsilon)_i\neq \ast$. For every $W\Subset (S_2)^*$, if we consider $\gamma(\xi)=(\widehat{\varphi},\widehat{x})$ as above, then \[\gamma(\xi)|_{W} \notin \Fo.\]

These five conditions provide the forbidden patterns $\FF'=\Fo'(\Fo,N)$ that define our subshift $\QI(\Fo,N)$. Next we will define $\QI'(\Fo,N)$ and the maps $\theta,\mu$ and $\gamma$.

\begin{defn}(The set $\QI'(\Fo,N)$)\label{def:set}
    We take $\QI'(\Fo,N)\subset \QI(\Fo,N)\times I$ as the set of all configurations that code a pre-image at the origin on index $i$, that is \[ \QI'(\Fo,N) = \{ (\varphi,x,i) \in \QI(\Fo,N)\times I :\varepsilon \in \supp(x) \mbox{ and } x(\varepsilon)_i \neq \ast \}. \]
\end{defn}

For the rest of this section, we will fix a lexicographical order in $(S_1)^*$.

\begin{defn}(The map $\theta$)
\label{def:theta}
    We let $\theta\colon \QI(\Fo,N) \to (S_1)^{\leq N} \times I$ be given by $\theta(\varphi,x) = (u,i)$ if and only if $u \in (S_1)^{\leq N}$ is the lexicographically smallest word, and $i \in I$ the smallest index such that $u \in \supp(x)$ and $x(u)_i \neq \ast$.
\end{defn}

We remark that $\theta$ is well defined by condition \textbf{C1}.

\begin{defn}(The map $\mu$)
\label{def:mu}
    We let $\mu\colon \QI'(\Fo,N)\times (S_2)^* \to (S_1)^{\leq N}\times I$ be given by \[ \mu(\xi,u) = \begin{cases}
        (\mov_{\xi}(u),\ind_{\xi}(u)) & \mbox{if } \xi \circ u \mbox{ is defined},\\
        (\varepsilon,1) & \mbox{otherwise.}
    \end{cases}  \] Where $(\mov_{\xi}(u),\ind_{\xi}(u))$ are as in~\Cref{eqn:movind}.
    
\end{defn}

\begin{defn}(The map $\gamma$)
\label{def:gamma}
    We let $\gamma\colon \QI'(\Fo,N) \to A[\Gamma_2]$ be given by $\gamma(\varphi,x,i) = (\widehat{\varphi},\widehat{x})$, where $(\widehat{\varphi},\widehat{x})$ are the configurations given in Equation~\eqref{eq:induced_configurations_QI}.
\end{defn}

It is direct from the construction that $\theta$, $\mu$ and $\gamma$ are computable maps.

\begin{lemma}\label{lem:movement_is_well_defined}
    Let ${\xi} = (\varphi,x,i) \in \QI'(\Fo,N)$ and $u,v\in (S_2)^*$ be two $\Gamma_2$-equivalent words. Then, $$\underline{\mov_{\xi}(u)}_{\Gamma_1} = \underline{\mov_{\xi}(v)}_{\Gamma_1} \mbox{ and } \ind_{\xi}(u) = \ind_{\xi}(v).$$
\end{lemma}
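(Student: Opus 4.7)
The plan is to reduce to a single similarity step and then combine condition \textbf{C4} with the consistency condition (s2) of the coloring $q$.

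First, since $\Gamma_2$-equivalence is the transitive symmetric closure of $\Gamma_2$-similarity, I would proceed by induction on the length of a similarity chain connecting $u$ to $v$. The base case $u=v$ is trivial. For the inductive step, by symmetry it suffices to treat the case where $u = xwy$ and $v = xw'y$ for some $x,y \in (S_2)^*$ and some $(w,w') \in R_2$. The central tool is the straightforward additivity property of $\mov$ and $\ind$ that follows from iterating~\Cref{eqn:movind}: whenever $\xi \circ ab$ is defined,
\[
\underline{\mov_{\xi}(ab)}_{\Gamma_1} = \underline{\mov_{\xi}(a) \cdot \mov_{\xi \circ a}(b)}_{\Gamma_1}
\quad\text{and}\quad
\ind_{\xi}(ab) = \ind_{\xi \circ a}(b).
\]

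Setting $\xi' = \xi \circ x$, this additivity reduces the desired equalities to showing
\[
\underline{\mov_{\xi'}(w)\cdot \mov_{\xi'\circ w}(y)}_{\Gamma_1} = \underline{\mov_{\xi'}(w')\cdot \mov_{\xi'\circ w'}(y)}_{\Gamma_1}
\quad\text{and}\quad
\ind_{\xi'\circ w}(y) = \ind_{\xi'\circ w'}(y).
\]
Condition \textbf{C4} applied to $\xi'$ and the relation $(w,w') \in R_2$ yields $\underline{\mov_{\xi'}(w)}_{\Gamma_1} = \underline{\mov_{\xi'}(w')}_{\Gamma_1}$ and $\ind_{\xi'}(w) = \ind_{\xi'}(w')$. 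It remains to transfer this agreement through the suffix $y$. Since $(\varphi,q)$ lies in the $\Gamma_1$-subshift $\QI(\Fo,N)\subset B[\Gamma_1]$, condition (s2) forces $q$ to take the same value on $\Gamma_1$-equivalent words of its support; hence the two positions $\mov_{\xi'}(w)$ and $\mov_{\xi'}(w')$ carry the same letter of $\widehat{B}$, and in particular the same entry at the (common) index $\ind_{\xi'}(w)=\ind_{\xi'}(w')$. Therefore $\xi'\circ w$ and $\xi'\circ w'$ induce the same starting data for the partial action, so applying $y$ from each produces identical movements (up to $\Gamma_1$-equivalence) and identical indices.

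The main subtlety is bookkeeping of definedness: we need $\xi \circ v$ to be defined whenever $\xi \circ u$ is. This is handled once one observes that consistency of the relation forces $\init(w)=\init(w')$ so the first letter of $w'$ can be applied at $\xi'$, and then condition \textbf{C2} together with the equality of states at $\xi'\circ w$ and $\xi'\circ w'$ (supplied by (s2) after applying \textbf{C4} step-wise along $w'$) propagates definedness through the rest of $w'$ and then through $y$. Once definedness is secured, the two-paragraph argument above closes the lemma.
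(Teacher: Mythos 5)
Your proposal follows the same route as the paper's proof: reduce to a single $\Gamma_2$-similarity step, apply condition \textbf{C4} at $\xi\circ x$, and then transfer the resulting agreement through the suffix $y$. You are in fact a bit more explicit than the paper on the last step; where the paper compresses it to ``Using that $\varphi$ is a $\Gamma_1$-model,'' you correctly identify that the operative fact is condition (s2) on $q$ (the letter of $\widehat{B}$ at $\Gamma_1$-equivalent words of $\supp(\varphi)$ is the same), which, together with the common index from \textbf{C4}, makes the partial action on $y$ produce literally the same movement words and indices from both positions. The definedness bookkeeping you flag is real but is handled implicitly by the lemma (as in the paper, $\mov_\xi$ and $\ind_\xi$ are only invoked where the partial action is defined), so your remark is a harmless clarification rather than a gap.
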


\begin{proof}
    We start by taking $u,v \in (S_2)^*$ two $\Gamma_2$-similar words, that is, words such that there exist $w,u',v',z\in(S_2)^*$ such that $u = wu'z$, $v = wv'z$, and $(u', v')\in R_2$. Because $(\varphi, x)\cdot\mov_{\xi}(w)\in\QI(\Fo,N)$ satisfies \textbf{C4}, and $(u',v')\in R_2$, we have that 
    \[\underline{\mov_{{\xi}\circ w}(u')}_{\Gamma_1} = \underline{\mov_{{\xi}\circ w}(v')}_{\Gamma_1} \mbox{ and } \ind_{{\xi}\circ w}(u') = \ind_{{\xi}\circ w}(v').\]
    Notice that $\ind_{{\xi}\circ w}(u') = \ind_{\xi}(wu')$ and $\ind_{{\xi}\circ w}(v') = \ind_{\xi}(wv')$, as well as, $\underline{\mov_{\xi}(wu')}_{\Gamma_1} = \underline{\mov_{\xi}(w)\mov_{{\xi}\circ w}(u')}_{\Gamma_1}$ and $\underline{\mov_{\xi}(wv')}_{\Gamma_1} = \underline{\mov_{\xi}(w)\mov_{{\xi}\circ w}(v')}_{\Gamma_1}$. Therefore,
    \[\underline{\mov_{\xi}(wu')}_{\Gamma_1} = \underline{\mov_{\xi}(wv')}_{\Gamma_1} \mbox{ and } \ind_{\xi}(wu') = \ind_{\xi}(wv').\]
    Using that $\varphi$ is a $\Gamma_1$-model we conclude that
    \[\underline{\mov_{\xi}(wu'z)}_{\Gamma_1} = \underline{\mov_{\xi}(wv'z)}_{\Gamma_1} \mbox{ and } \ind_{\xi}(wu'z) = \ind_{\xi}(wv'z).\]
    Finally, if $u,v \in (S_2)^*$ are $\Gamma_2$-equivalent, there exists a sequence of words $u_1, ..., u_{n-1}$ such that $u_k$ is $\Gamma_2$-similar to $u_{k+1}$ for all $k\in\{0,...,n-1\}$ where $u_0 = u$ and $u_n = v$. The previous argument implies that $\underline{\mov_{\xi}(u_k)}_{\Gamma_1} = \underline{\mov_{\xi}(u_{k+1})}_{\Gamma_1}$ and ${\ind_{\xi}(u_k)} = {\ind_{\xi}(u_{k+1})}$ for all $k\in\{0,...,n-1\}$ and thus $\underline{\mov_{\xi}(u)}_{\Gamma_1} = \underline{\mov_{\xi}(v)}_{\Gamma_1}$ and $\ind_{\xi}(u) = \ind_{\xi}(v)$ as required.
\end{proof}

\begin{lemma}
\label{lem:propiedades_gamma}
    For all $\xi = (\varphi, x, i)\in\QI'(\Fo,N)$ we have that $\gamma(\xi) = (\widehat{\varphi},\widehat{x}) \in X[\Gamma_2,\FF]$. Furthermore, $\supp(\widehat{\varphi}) = \{ u\in (S_2)^* : \xi \circ u \mbox{ is defined}\}$, and for all such $u$,
    $$\gamma(\xi \circ u) = (\widehat{\varphi},\widehat{x})\cdot u.$$
\end{lemma}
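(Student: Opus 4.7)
The plan is to verify the three claims in order: the support identity, the conjugacy $\gamma(\xi\circ u)=(\widehat\varphi,\widehat x)\cdot u$, and finally membership in $X[\Gamma_2,\FF]$. Throughout, I will write $\gamma(\xi)=(\widehat\varphi,\widehat x)$ and systematically translate between the partial action $\xi\circ u$ on $\QI'(\Fo,N)$ and the defining formulas~\eqref{eq:induced_configurations_QI}.

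First I would dispatch the support claim. By the case split in~\eqref{eq:induced_configurations_QI}, $\widehat\varphi(u)\in M_2$ exactly when $\xi\circ u$ is defined, and $\widehat\varphi(u)=\varnothing$ otherwise; the same dichotomy is built into the definition of $\widehat x$, which immediately gives $\supp(\widehat x)=\supp(\widehat\varphi)$, i.e.\ condition (s1). For the conjugacy, fix $u\in\supp(\widehat\varphi)$ and write $\gamma(\xi\circ u)=(\widehat\varphi',\widehat x')$. An induction on $|v|$ using the iterative definition of the partial action shows that $(\xi\circ u)\circ v$ is defined iff $\xi\circ uv$ is defined, and in that case the two coincide, so $\mov_{\xi\circ u}(v)$ agrees with $\mov_{\xi}(uv)$ modulo the relation giving the shift by $\mov_\xi(u)$, and $\ind_{\xi\circ u}(v)=\ind_\xi(uv)$. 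Plugging into~\eqref{eq:induced_configurations_QI} shows $\widehat\varphi'(v)=\widehat\varphi(uv)$ and $\widehat x'(v)=\widehat x(uv)$, which is precisely $(\widehat\varphi,\widehat x)\cdot u$.

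Next I would check that $\widehat\varphi$ is a $\Gamma_2$-model. For consistency, take $w\in(S_2)^*$ and $s\in S_2$. If $\init(s)=\widehat\varphi(w)$, then $\xi\circ w$ is defined and $M_2(q(\mov_\xi(w))_{\ind_\xi(w)})=\init(s)$; by Condition \textbf{C0} the symbols $\partial P$ and $\partial I$ are not $\diamond$ on $s$, so applying Condition \textbf{C2} to the shifted configuration witnesses that $\xi\circ ws$ is defined and that the resulting state lies in $\ter(s)$, i.e.\ $\widehat\varphi(ws)\in\ter(s)$. If instead $\init(s)\neq\widehat\varphi(w)$, either $\xi\circ w$ is undefined (so $\xi\circ ws$ is as well) or Condition \textbf{C0} forces $\partial P(q(\cdot)_\cdot)(s)=\diamond$, making $\xi\circ ws$ undefined; either way $\widehat\varphi(ws)=\varnothing$. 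For the model axiom, two $\Gamma_2$-equivalent words $u,v\in\supp(\widehat\varphi)$ satisfy $\underline{\mov_\xi(u)}_{\Gamma_1}=\underline{\mov_\xi(v)}_{\Gamma_1}$ and $\ind_\xi(u)=\ind_\xi(v)$ by~\Cref{lem:movement_is_well_defined}, and since $\varphi$ is itself a $\Gamma_1$-model, the values $q(\mov_\xi(u))_{\ind_\xi(u)}$ and $q(\mov_\xi(v))_{\ind_\xi(v)}$ agree; projecting to $M_2$ (resp.\ $A$) gives $\widehat\varphi(u)=\widehat\varphi(v)$ and simultaneously condition (s2) for $\widehat x$.

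Finally I would verify (s3). Fix $p\in\FF$ and $w\in\supp(\widehat\varphi)$. The configuration $\xi\circ w$ lies in $\QI'(\Fo,N)$, so Condition \textbf{C5} applied with $W=\supp(p)$ gives $\gamma(\xi\circ w)|_{\supp(p)}\notin\FF$; in particular $\gamma(\xi\circ w)|_{\supp(p)}\neq p$, so there exists $u\in\supp(p)$ with $\gamma(\xi\circ w)(u)\neq p(u)$. By the conjugacy established above, $\gamma(\xi\circ w)(u)=(\widehat\varphi(wu),\widehat x(wu))$, yielding (s3). The only step that requires real care is the verification of consistency of $\widehat\varphi$, since one must correctly chain Conditions \textbf{C0} and \textbf{C2} through the partial action; the rest is essentially a direct translation via~\Cref{lem:movement_is_well_defined} and the definitions.
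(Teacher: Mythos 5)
Your proposal is correct and relies on the same key ingredients as the paper's proof: the defining case split in the formula for $\gamma$, conditions \textbf{C0}, \textbf{C2}, \textbf{C5}, and~\Cref{lem:movement_is_well_defined}. The only organizational difference is cosmetic: the paper establishes the support identity and $\Gamma_2$-consistency of $\widehat\varphi$ together in a single induction and then proves the conjugacy $\gamma(\xi\circ u)=(\widehat\varphi,\widehat x)\cdot u$ afterwards, whereas you treat the support identity as immediate from the case split, prove the conjugacy first, and then verify consistency; since these steps are independent, both orderings are fine and the arguments match in substance.
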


\begin{proof}
We begin by showing the identity  $\supp(\widehat{\varphi}) = \{ u\in (S_2)^* : \xi \circ u \mbox{ is defined}\}$. Because we are considering $(\varphi,x,i)\in\QI'(\Fo, N)$, we have $\widehat{\varphi}(\varepsilon) = M_2(x(\varepsilon)_i) \neq \varnothing$. Let $m \geq 0$ and suppose inductively that \[ W_m = \{w \in \supp(\widehat{\varphi}): |w|\leq m\} = \{ u\in (S_2)^{\leq m} : \xi \circ u \mbox{ is defined}\}.  \]

Next, take $w \in W_m$ with $|w|=m$ and note that $\widehat{\varphi}(w) = M_2(x(\mov_{\xi}(w))_{\ind_{\xi}(w)})$. By condition \textbf{C2} on the pair $(\varphi \cdot \mov_{\xi}(w), x \cdot \mov_{\xi}(w))$ it follows that $(\varphi \cdot \mov_{\xi}(w), x \cdot \mov_{\xi}(w), \ind_{\xi}(w))\circ s$ is defined exactly for those $s \in S_2$ such that $\init(s) = \widehat{\varphi}(w)$ and that $$\widehat{\varphi}(ws) = M_2(x(\mov_{\xi}(ws))_{\ind_{\xi}(ws)}) \in \ter(s).$$
On the other hand,  for those $s' \in S_2$ with $\init(s') \neq \widehat{\varphi}(w)$, by definition we have $\widehat{\varphi}(ws') =\varnothing$. This shows that the inductive hypothesis holds for $m+1$ and thus shows the required identity.

From the identity, it follows directly that $\widehat{\varphi}$ is $\Gamma_2$-consistent. Now, take $u,v \in (S_2)^*$ to be $\Gamma_2$-equivalent. By Lemma~\ref{lem:movement_is_well_defined}, $\underline{\mov_{\xi}(u)}_{\Gamma_1} = \underline{\mov_{\xi}(v)}_{\Gamma_1}$ and $\ind_{\xi}(u) = \ind_{\xi}(v)$. Thus,
$$\widehat{\varphi}(u) = M_2(x(\mov_{\xi}(u))_{\ind_{\xi}(u)}) = M_2(x(\mov_{\xi}(v))_{\ind_{\xi}(v)}) = \widehat{\varphi}(v).$$

We conclude that $\widehat{\varphi}$ is a $\Gamma_2$-model.

We now proceed to check conditions (s1) to (s3) from Definition~\ref{def:phi_subshift}, to show $\widehat{x}\in X[\Gamma_2, \widehat{\varphi},\Fo]$. For (s1), notice that for $w\in (S_2)^*$, $\widehat{\varphi}(w)\neq \varnothing$ if and only if $\widehat{x}(w)\neq \varnothing$, therefore $\supp(\widehat{x})=\supp(\widehat{\varphi})$. For condition (s2), take two $\Gamma_2$-equivalent words $u,v\in (S_2)^*$. As was the case for $\widehat{\varphi}$, by Lemma~\ref{lem:movement_is_well_defined}, $\underline{\mov_{\xi}(u)}_{\Gamma_1} = \underline{\mov_{\xi}(v)}_{\Gamma_1}$ and $\ind_{\xi}(u) = \ind_{\xi}(v)$. Thus,
$$\widehat{x}(u) = A(x(\mov_{\xi}(u))_{\ind_{\xi}(u)}) = A(x(\mov_{\xi}(v))_{\ind_{\xi}(v)}) = \widehat{x}(v).$$

Finally, to prove (s3), we need the following. Take $u\in \supp(\widehat{\varphi})$. If we denote $(\widehat{\varphi}',\widehat{x}') = \gamma(\xi\circ u)$, we have
\begin{align*}
    \widehat{\varphi}'(w) &= M_2(x\cdot\mov_{\xi}(u)(\mov_{{\xi}\circ u}(w))_{\ind_{{\xi}\circ u}(w)}),\\
    &= M_2(x(\mov_{\xi}(u)\mov_{{\xi}\circ u}(w))_{\ind_{{\xi}\circ u}(w)}).
\end{align*}

As we saw before, $\underline{\mov_{\xi}(uw)}_{\Gamma_1} = \underline{\mov_{\xi}(u)\mov_{{\xi}\circ u}(w)}_{\Gamma_1}$ and $\ind_{\xi}(uw) = \ind_{{\xi}\circ u}(w)$. Therefore,
\[\widehat{\varphi}'(w) = M_2(x(\mov_{\xi}(uw))_{\ind_{\xi}(uw)}) = \widehat{\varphi}(uw).\]

An analogous procedure shows, $\widehat{x}'(w) = \widehat{x}(uw)$. This shows $\gamma(\xi \circ u) = (\widehat{\varphi},\widehat{x})\cdot u$. With this formula at hand, it follows by  condition \textbf{C5} that for every $u\in \supp(\widehat{\varphi})$ and and $W\Subset (S_2)^*$ we have \[\bigl((\widehat{\varphi},\widehat{x})\cdot u\bigr)|_{W} = (\widehat{\varphi}',\widehat{x}')|_W \notin\Fo.   \]

Thus condition (s3) holds.
\end{proof}

%

\begin{lemma}
\label{lem:ida}
    Suppose there exist models $\varphi_1\in\M(\Gamma_1)$ and $\varphi_2\in\M(\Gamma_1)$, $X[\Gamma_2, \varphi_2, \Fo]$ is non-empty and $G(\Gamma_1, \varphi_1)$ is quasi-isometric to $G(\Gamma_2,\varphi_2)$. Then there exists a positive integer $N$ and $x \in B[\Gamma_1, {\varphi_1}]$ such that $(\varphi_1,x) \in \QI(\Fo,N)$.
\end{lemma}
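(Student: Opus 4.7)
The plan is to construct an explicit element $(\varphi_1,q)\in\QI(\Fo,N)$ from the given data: a configuration $(\varphi_2,x)\in X[\Gamma_2,\varphi_2,\Fo]$ and a quasi-isometry $f\colon G(\Gamma_2,\varphi_2)\to G(\Gamma_1,\varphi_1)$ with constants $(\lambda,C,D)$.

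First I would calibrate $N$. Because both model graphs have out-degree at most $|S_i|$ and are strongly connected, by~\Cref{rem:Nto1} the quasi-isometry $f$ may be assumed to be at most $N$-to-$1$ and to send every edge $(u,us)$ of $G(\Gamma_2,\varphi_2)$ to a pair of vertices at distance at most $N$ in $G(\Gamma_1,\varphi_1)$, while also being $N$-relatively dense. I would take $N$ additionally large enough to accommodate a ``coarse inverse'' of $f$ with controlled constants, so that for any $v\in G(\Gamma_1,\varphi_1)$ lying in the image of $f$ and at distance at most $2N+1$ from $\underline{\varepsilon}_{\Gamma_1}$, and any fixed preimage of $\underline{\varepsilon}_{\Gamma_1}$, there is a directed path in $G(\Gamma_2,\varphi_2)$ of length at most $N(3N+1)$ from that preimage to some preimage of $v$ (here strong connectivity of $G(\Gamma_2,\varphi_2)$ is essential).

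Next I would define $q$. For each equivalence class $[v]$ of $G(\Gamma_1,\varphi_1)$ enumerate $f^{-1}([v])=\{u_1^{[v]},\dots,u_{k_{[v]}}^{[v]}\}$ with $k_{[v]}\leq N$ (empty if $[v]\notin f(G(\Gamma_2,\varphi_2))$). For $w\in\supp(\varphi_1)$ I set $q(w)=(b_1,\dots,b_N)$ with $b_i=\ast$ whenever $i>k_{[w]}$, and otherwise
\[
b_i=\bigl(\varphi_2(u_i^{[w]}),\ x(u_i^{[w]}),\ p_i,\ \iota_i\bigr),
\]
where for each $s\in S_2$ with $\init(s)=\varphi_2(u_i^{[w]})$, $p_i(s)\in (S_1)^{\leq 2N}$ is a chosen word representing the vertex $[f(u_i^{[w]}s)]$ from $w$, and $\iota_i(s)\in I$ is the position of $u_i^{[w]}s$ in the enumeration at $[f(u_i^{[w]}s)]$; for all other $s$, $p_i(s)=\iota_i(s)=\diamond$. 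For $w\notin\supp(\varphi_1)$, set $q(w)=\varnothing$. By construction $q$ is constant on $\Gamma_1$-equivalence classes, so $(\varphi_1,q)\in B[\Gamma_1]$ and condition \textbf{C0} is satisfied.

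Finally I would check \textbf{C1}--\textbf{C5}. Condition \textbf{C1} follows from $N$-relative density together with strong connectivity of $G(\Gamma_1,\varphi_1)$. Condition \textbf{C2} is exactly how the pair $(p_i,\iota_i)$ was constructed: $u_i^{[w]}s\in\supp(\varphi_2)$ since $\varphi_2$ is a $\Gamma_2$-model, so its image under $f$ is at distance at most $N$, and its $\Gamma_2$-state lies in $\ter(s)$. Condition \textbf{C4} is automatic because $(u,v)\in R_2$ makes $u$ and $v$ represent the same vertex of $G(\Gamma_2,\varphi_2)$, which is then sent by $f$ to a single vertex of $G(\Gamma_1,\varphi_1)$ with a single index in our enumeration. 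Condition \textbf{C5} holds because for any $\xi=(\varphi_1,q,i)$ with $q(\varepsilon)_i\neq\ast$ one verifies that $\gamma(\xi)=(\varphi_2,x)\cdot u_i^{[\varepsilon]}$, and $X[\Gamma_2,\varphi_2,\Fo]$ is invariant under the partial $S_2^*$-action, so no forbidden pattern appears.

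The main obstacle is \textbf{C3}: it is what forces $N$ to be large, and it is the only condition that consumes quantitative information beyond the straightforward degree bounds and relative density. Establishing it amounts to showing that the constructed $q$ ``remembers'' enough of the geometry of $G(\Gamma_2,\varphi_2)$ that short paths in $G(\Gamma_1,\varphi_1)$ between enumerated preimages are witnessed by short paths in $G(\Gamma_2,\varphi_2)$, which I would handle by invoking a standard coarse-inverse of $f$ and absorbing its additive and multiplicative constants into the final choice of $N$.
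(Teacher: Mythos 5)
Your proposal follows essentially the same route as the paper: fix $N$ bounding the quasi-isometry constants and multiplicities, upgrade $f$ to an injective map into $G(\Gamma_1,\varphi_1)\times\{1,\dots,N\}$ by enumerating preimages, encode the states, colors, displacement words and indices into $q$, and verify \textbf{C0}--\textbf{C5}. The one step you leave sketched, \textbf{C3}, needs no coarse inverse: the lower quasi-isometric-embedding inequality gives $\rho_2(\underline{w}_{\Gamma_2},\underline{u}_{\Gamma_2})\leq N(\rho_1(f(\underline{w}_{\Gamma_2}),f(\underline{u}_{\Gamma_2}))+N)\leq N(3N+1)$ directly, and strong connectivity of $G(\Gamma_2,\varphi_2)$ turns this into the required directed word, exactly the quantitative statement you announce in your calibration paragraph.
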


\begin{proof}
     Let $f\colon G(\Gamma_2,\varphi_2)\to G(\Gamma_1, \varphi_1)$ be a quasi-isometry and let $N$ be a positive integer which both bounds the constants in the quasi-isometry $f$ and the number of pre-images of elements of $G(\Gamma_1, \varphi_1)$ and let $y\in X[\Gamma_2, \varphi_2, \Fo]$. We shall construct $x \in B[\Gamma_1, {\varphi_1}]$ such that $(\varphi_1,x) \in \QI(\Fo,N)$.
    As $f$ is $N$-to-$1$, if we let $I = \{1,\dots,N\}$, it follows (using choice) that there exists an injective map $\widehat{f}\colon G(\Gamma_2,\varphi_2) \to G(\Gamma_1,\varphi_1)\times I$ with the property that for every $h\in G(\Gamma_2, \varphi_2)$ then $\widehat{f}(h) = (f(h),i)$ for some $i \in I$.

    Because, $f$ is a quasi-isometry, for all $u\in\supp(\varphi_2)$ and $s\in S_2$ such that $\init(s) = \varphi_2(u)$ we have that $\rho_1(f(\underline{u}_{\Gamma_2}), f(\underline{us}_{\Gamma_2})) \leq 2N$. Therefore, there exists a word $w(u,s)\in (S_1)^{\leq 2N}$ such that \[\underline{f(\underline{u}_{\Gamma_2})w(u,s)}_{\Gamma_1} = f(\underline{us}_{\Gamma_2}).\]

    Consider $v \in (S_1)^*$. If $v \notin \supp(\varphi_1)$ we set $x(v)=\varnothing$. Otherwise, we have 
    $v\in \supp(\varphi_1)$ and take $i \in I$. If $(v,i)$ is such that $(\underline{v}_{\Gamma_1},i)$ is not in the image of $\widehat{f}$, we set $x(v)_i = \ast$. On the other hand, if $(\underline{v}_{\Gamma_1},i) = \widehat{f}(\underline{u}_{\Gamma_2})$ for some $u \in \supp(\varphi_2)$, we set $M_2(x(v)_i) = \varphi_2(u)$, $A(x(v)_i) = y(u)$, 
    $$\partial P(x(v)_i)(s) = \begin{cases}
        w(u,s) & \mbox{ if } \init(s) = \varphi_2(u) \\
        \diamond & \mbox{ otherwise},
    \end{cases}$$
    and
    $$\partial I(x(v)_i)(s) = \begin{cases}
        j & \mbox{ if } \init(s) = \varphi_2(u) \\
        \diamond & \mbox{ otherwise},
    \end{cases}$$
    where $j \in J$ is the index such that $\widehat{f}(\underline{us}_{\Gamma_2}) = (f(\underline{us}_{\Gamma_2}), j)$. By construction, it is clear that $x(v)\in B$ for every $v\in \supp(\varphi_1)$. Therefore, $(\varphi_1, x)$ satisfies \textbf{C0} and it is clear that $(\varphi_1, x) \in B[\Gamma_1]$. Let us look at the rest of the conditions.\\

 \noindent
 \textbf{C1}: Because $f$ is a quasi-isometry, by the relative density condition, there exists $u\in (S_2)^*$ such that 
 $$\max\{\rho_1(\underline{\varepsilon}_{\Gamma_1}, f(\underline{u}_{\Gamma_2})),\rho_1(f(\underline{u_{\Gamma_2}}),\underline{\varepsilon}_{\Gamma_1})\}\leq N.$$
 Then, there exists $i\in I$ and $w,w'\in (S_1)^{\leq N}$ such that $\widehat{f}(\underline{u}_{\Gamma_2}) = (\underline{w}_{\Gamma_1},i)$ and $\underline{ww'}_{\Gamma_1} = \varepsilon$. This implies $x(w)\in B$ and $x(w)_i\neq\ast$.\\

 \noindent
 \textbf{C2}: Let $v\in \supp(q)$ and $i\in I$ such that $x(v)_i\neq \ast$. By construction, there exists $u\in (S_2)^*$ such that $\widehat{f}(\underline{u}_{\Gamma_2}) = (\underline{v}_{\Gamma_1},i)$. Then, for $s\in S_2$ such that $\init(s) = \varphi_2(u)$ we have that $x(vw(u,s))\in B$, $x(vw(u,s))_j \neq \ast$, and $M_2(x(vw(u,s))_j) = \varphi_2(us)\in\ter(s)$, where $ j =\partial I (x(v)_i)(s)$. Hence condition \textbf{C2} holds. \\

 Take $\xi = (\varphi_1,x,i)$ and recall that now that condition \textbf{C2} holds, we have access to the partial action
 $$\xi \circ s = \left(\varphi_1\cdot \partial P(x(\varepsilon)_{i})(s),\ x\cdot \partial P(x(\varepsilon)_{i})(s),\ \partial I(x(\varepsilon)_{i})(s)\right).$$
 
 It follows from our definition of $q$ that this is defined on all $u \in \supp(x)$ and we can write
 \[ \xi\circ u = (\varphi_1 \cdot \mov_{\xi}(u), x\cdot \mov_{\xi}(u), \ind_{\xi}(u)). \]

 \noindent
 \textbf{C3}: Take $v\in (S_1)^{\leq 2N+1}$ such that there exists $i,j\in I$ with $x(\varepsilon)_i,x(v)_j\neq\ast$. Define $\xi = (\varphi_1,x,i)$. By the definition of $x$, there exist $w, u\in (S_2)^*$ such that $\widehat{f}(\underline{w}_{\Gamma_2}) = (\underline{\varepsilon}_{\Gamma_1},i)$ and $\widehat{f}(\underline{u}_{\Gamma_2})=(\underline{v}_{\Gamma_1},j)$. Then, as $f$ is a quasi-isometry, 
 $$\rho_2(\underline{w}_{\Gamma_2},\underline{u}_{\Gamma_2})\leq N(\rho_1(f(\underline{w}_{\Gamma_2}), f(\underline{u}_{\Gamma_2})) + N) = N(\rho_1(\underline{\varepsilon}_{\Gamma_1}, \underline{v}_{\Gamma_1}) + N) \leq N(3N+1).$$
In other words, there exists $w'\in (S_2)^{\leq N(3N+1)}$ such that $\underline{ww'}_{\Gamma_2}= \underline{u}_{\Gamma_2}$. Finally, a simple computation shows $\underline{\mov_{\xi}(w')}_{\Gamma_1} = \underline{v}_{\Gamma_1}$ and $\ind_{\xi}(w') = j$.\\ 

 
 \noindent
 \textbf{C4}: Let $i \in I$, $\xi=(\varphi_1,x,i)$, a relation $(u,v)\in R_2$ and suppose that both $\xi\circ u$ and $\xi\circ v$ are defined. First, there exists $w\in (S_2)^*$ such that $\widehat{f}(\underline{w}_{\Gamma_2}) = (\underline{\varepsilon}_{\Gamma_1}, i)$. As $(u,v) \in R_2$, we have $\underline{u}_{\Gamma_2} = \underline{v}_{\Gamma_2}$ and therefore, $$(\underline{\mov_{\xi}(u)}_{\Gamma_1},\ind_{\xi}(u)) = \widehat{f}(\underline{wu}_{\Gamma_2}) = \widehat{f}(\underline{wv}_{\Gamma_2}) = (\underline{\mov_{\xi}(v)}_{\Gamma_1},\ind_{\xi}(v)).$$

  \noindent
 \textbf{C5}: Let $\xi = (\varphi_1,x,i)$ for some $i \in I$ and suppose that $x(\varepsilon)_i \neq \ast$. Let $(\widehat{\varphi}, \widehat{x})$ be the configuration defined in Equation~\eqref{eq:induced_configurations_QI}. Take $w\in (S_2)^*$ such that $\widehat{f}(\underline{w}_{\Gamma_2}) = (\underline{\varepsilon}_{\Gamma_1}, i)$. We claim $(\widehat{\varphi},\widehat{x}) = (\varphi_2,y)\cdot w$, from where it will follow that no forbidden pattern from $\FF$ may occur and condition \textbf{C5} is satisfied. Indeed, let $u \in (S_2)^*$ such that $(\varphi_1, x, i)\circ u$ is defined. By the previous arguments, we have $\widehat{f}(\underline{wu}_{\Gamma_2})=(\mov_{\xi}(u),\ind_{\xi}(u))$. Therefore, 
 \begin{align*}
     \widehat{\varphi}(u) & = M_2(x(\mov_{\xi}(u))_{\ind_{\xi}(u)}) = \varphi_2(wu),\\
     \widehat{x}(u) & = A(x(\mov_{\xi}(u))_{\ind_{\xi}(u)}) = y(wu).
 \end{align*}
As $(\varphi_1,x)$ satisfies conditions \textbf{C1-C5}, we conclude that $(\varphi_1,x) \in \QI(\Fo,N)$.
\end{proof}

\begin{lemma} (All positions can be reached)
\label{lem:interpolacion}
    Let ${\xi} = (\varphi,x,i)\in \QI'(\Fo,N)$. Let $v \in \supp(x)$ and $j \in I$ such that $x(v)_j \neq \ast$. There exists $u \in (S_2)^*$ such that \[ \underline{\mov_{\xi}(u)}_{\Gamma_1} = \underline{v}_{\Gamma_1} \mbox{ and } \ind_{\xi}(u) = j.  \]
Furthermore, we have $|u| \leq N(3N+1)(|v|+1)$.
\end{lemma}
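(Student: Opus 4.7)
The plan is to thread a path in $G(\Gamma_1,\varphi)$ from $\underline{\varepsilon}_{\Gamma_1}$ to $\underline{v}_{\Gamma_1}$ through a sequence of marked anchor positions, and then apply \textbf{C3} iteratively on each short segment between consecutive anchors. Write $v = s_1 \cdots s_n$ as a word in $(S_1)^*$ with $n = |v|$ and set $p_k = s_1 \cdots s_k$ for $k = 0,\dots,n$; since $\varphi$ is $\Gamma_1$-consistent, each $p_k \in \supp(q)$. If $n = 0$, then $v = \varepsilon$ and \textbf{C3} applied to $\xi$ with target word $\varepsilon$ and target index $j$ directly yields $u \in (S_2)^{\leq N(3N+1)}$ with the claimed properties, so assume $n \geq 1$.

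For each intermediate $k \in \{1,\dots,n-1\}$, apply condition \textbf{C1} to the shifted configuration $(\varphi,q)\cdot p_k$ to obtain words $w_k, w_k' \in (S_1)^{\leq N}$ and an index $i_k \in I$ with $q(p_k w_k)_{i_k} \neq \ast$ and $\underline{w_k w_k'}_{\Gamma_1} = \underline{\varepsilon}_{\Gamma_1}$. Set $w_0 = w_0' = w_n = \varepsilon$, $i_0 = i$, and $i_n = j$, since $\xi \in \QI'(\Fo,N)$ and $q(v)_j \neq \ast$ already provide marks at the endpoints. Define anchors $a_k = p_k w_k$ for $k = 0,\dots,n$. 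The sequence $a_0 = \varepsilon, a_1, \dots, a_n = v$ then has the property that consecutive anchors are connected in $G(\Gamma_1,\varphi)$ by the word $w_k' s_{k+1} w_{k+1}$ of length at most $N + 1 + N = 2N+1$, since $\underline{a_k w_k' s_{k+1} w_{k+1}}_{\Gamma_1} = \underline{p_k s_{k+1} w_{k+1}}_{\Gamma_1} = \underline{a_{k+1}}_{\Gamma_1}$.

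Now iterate \textbf{C3}: set $\xi_0 = \xi$ and, assuming inductively that $\xi_k \in \QI'(\Fo,N)$ is a shift of $\xi$ whose origin corresponds to anchor $a_k$ with marked index $i_k$, apply \textbf{C3} to $\xi_k$ (which lies in $\QI(\Fo,N)$ by shift-invariance of the subshift) with target word $w_k' s_{k+1} w_{k+1} \in (S_1)^{\leq 2N+1}$ and target index $i_{k+1}$. This produces $u_{k+1} \in (S_2)^{\leq N(3N+1)}$ such that $\xi_{k+1} := \xi_k \circ u_{k+1}$ has origin corresponding to anchor $a_{k+1}$ with index $i_{k+1}$. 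Concatenating $u = u_1 u_2 \cdots u_n$ yields $\underline{\mov_\xi(u)}_{\Gamma_1} = \underline{a_n}_{\Gamma_1} = \underline{v}_{\Gamma_1}$, $\ind_\xi(u) = i_n = j$, and the length bound $|u| \leq n \cdot N(3N+1) \leq N(3N+1)(|v|+1)$. The only point requiring care is the verification at each step that the preconditions of \textbf{C3} are met, namely that both the origin of $\xi_k$ and the target position are marked with their claimed indices; this is guaranteed by the construction of the anchors and the inductive hypothesis.
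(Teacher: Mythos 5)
Your proof is correct and follows essentially the same path as the paper: decompose $v$ into unit steps, use \textbf{C1} to find marked anchors near each intermediate position, and iterate \textbf{C3} to hop between consecutive anchors, with the $N(3N+1)|v|$ bound coming from one application of \textbf{C3} per segment. You are slightly more careful than the paper at the endpoints, explicitly taking $w_0 = w_n = \varepsilon$ with $i_0 = i$ and $i_n = j$ instead of applying \textbf{C1} to every $p_k$ indiscriminately; the paper relies implicitly on this choice so that the composed $\mov_\xi(u)$ lands on $\underline{v}_{\Gamma_1}$ with index $j$, and your version makes that step airtight.
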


\begin{proof}
    Suppose first that $v=\varepsilon$. In this case condition \textbf{C3} ensures that there exists $u \in (S_2)^{\leq N(3N+1)}$ such that $\underline{\mov_{{\xi}}(u)}_{\Gamma_1}= \underline{\varepsilon}_{\Gamma_1}$ and $\ind_{\xi}(u) = j$, thus the statement holds.

    Now suppose $v \neq \varepsilon$, let $n = |v| >0$ and consider a sequence $v_0,v_1,\dots,v_n$ such that $v_0 = \varepsilon$, $v_n = v$ and for every $k \in \{0,\dots,n-1\}$ we have $v_{k+1} = v_{k}s_k$ for some $s_k \in S_1$. By condition \textbf{C1}, for each $v_k$ we can find $w_k,w'_k\in (S_1)^{\leq N}$ and $j_k$ such that $q(v_kw_k)_{j_k}\neq \ast$ and $\underline{v_kw_kw'_k}_{\Gamma_1}=\underline{v_k}_{\Gamma_1}$. 

    Let $t_k = v_kw_k$. If we let $(\varphi^k,x^k)=(\varphi,x)\cdot t_k$ then as $|w'_ks_kw_{k+1}| \leq 2N+1$ and $\underline{t_{k+1}}_{\Gamma_1}= \underline{t_kw'_ks_kw_{k+1}}_{\Gamma_1}$, it follows by condition \textbf{C3} that there exists $u_k \in (S_2)^{\leq N(3N+1)}$ such that, if denote ${\xi}_k = (\varphi^k,x^k, j_k)$, then $\underline{\mov_{{\xi}_k}(u_k)}_{\Gamma_1} = \underline{w'_ks_kw_{k+1}}_{\Gamma_1}$ and $\ind_{{\xi}_k}(u_k) = j_{k+1}$. 
    
    Let $u = u_0\dots u_{n-1}$. We have that $|u| \leq N(3N+1)|v|$ and composing everything, we obtain \[\underline{\mov_{\xi}(u)}_{\Gamma_1} = \underline{\mov_{\xi}(u_0)\mov_{{\xi}_1}(u_1) \dots \mov_{{\xi}_{n-1}}(u_{n-1})}_{\Gamma_1} = \underline{v}_{\Gamma_1}, \mbox{ and } \ind_{\xi}(u) = j.    \]
    As $\min\{ N(3N+1)|v|, N(3N+1)\} \leq  N(3N+1)(|v|+1)$, we obtain the required bound.
\end{proof}

\begin{lemma}
\label{lem:vuelta}
    If there exists $N\geq 1$ such that $\QI(\Fo,N)$ is nonempty, then there there exist models $\varphi_1\in\M(\Gamma_1)$, $\varphi_2\in\M(\Gamma_2)$, and a quotient $\Gamma_2'$ of $\Gamma_2$ such that $X[\Gamma_2, \varphi_2,\Fo]$ is non-empty and $G(\Gamma_1, \varphi_1)$ is quasi-isometric to $G(\Gamma'_2,\varphi_2)$.
\end{lemma}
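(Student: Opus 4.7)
The plan is to invert the construction of Lemma~\ref{lem:ida}: starting from any $(\varphi_1, q) \in \QI(\Fo, N)$, extract a $\Gamma_2$-model via the map $\gamma$ and identify a quotient $\Gamma_2'$ of $\Gamma_2$ whose model graph is quasi-isometric to $G(\Gamma_1, \varphi_1)$. First, by condition~\textbf{C1} applied at the origin, there exist $w \in (S_1)^{\leq N}$ and $i \in I$ with $q(w)_i \neq \ast$; shifting by $w$ and relabeling, I may assume $\xi := (\varphi_1, q, i) \in \QI'(\Fo, N)$, since $G(\Gamma_1, \varphi_1)$ is graph-isomorphic to $G(\Gamma_1, \varphi_1 \cdot w)$. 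Setting $(\widehat{\varphi}, \widehat{x}) := \gamma(\xi)$, Lemma~\ref{lem:propiedades_gamma} yields $\widehat{\varphi} \in \M(\Gamma_2)$ and $\widehat{x} \in X[\Gamma_2, \widehat{\varphi}, \Fo]$, proving non-emptiness of the target subshift with $\varphi_2 := \widehat{\varphi}$.

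For the quotient, I would take $R_2' := R_2 \cup \tilde{R}$, where $\tilde{R}$ consists of all pairs $(u, v) \in \supp(\widehat{\varphi}) \times \supp(\widehat{\varphi})$ such that $\xi \circ u = \xi \circ v$ as elements of $\QI'(\Fo, N)$, and set $\Gamma_2' := (M_2, S_2, R_2')$; this is a quotient of $\Gamma_2$ because $R_2 \subseteq R_2'$. The candidate quasi-isometry $f\colon G(\Gamma_2', \widehat{\varphi}) \to G(\Gamma_1, \varphi_1)$ is given by $f(\underline{u}_{\Gamma_2'}) := \underline{\mov_\xi(u)}_{\Gamma_1}$, well-defined on $\Gamma_2'$-classes by construction of $\tilde{R}$. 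Its upper bound $\rho_1(f(\underline{u}), f(\underline{v})) \leq 2N \cdot \rho_{\Gamma_2'}(\underline{u}, \underline{v})$ comes from the length bound $|\mov_\xi(s)| \leq 2N$ baked into condition~\textbf{C0}; for the lower bound, I apply Lemma~\ref{lem:interpolacion} at the shifted origin $\xi \circ u$ with target realizing the directed distance in $G(\Gamma_1, \varphi_1)$ to produce $y \in (S_2)^*$ with $|y| \leq N(3N+1)(\rho_1(f(\underline{u}), f(\underline{v})) + 1)$ and $\xi \circ uy = \xi \circ v$, whence $uy \sim_{\Gamma_2'} v$. Relative density of $f$ is exactly condition~\textbf{C1}.

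The main obstacle is verifying that $\widehat{\varphi}$ is actually a $\Gamma_2'$-model: along any chain of $R_2'$-similarities relating two words in $\supp(\widehat{\varphi})$, the value of $\widehat{\varphi}$ must be preserved. For $R_2$-steps, Lemma~\ref{lem:movement_is_well_defined} handles this directly. The delicate case is $\tilde{R}$-steps of the form $(xwy, xw'y)$ with $(w, w') \in \tilde{R}$ and $xwy, xw'y \in \supp(\widehat{\varphi})$: the equality $\xi \circ w = \xi \circ w'$ does not immediately yield $(\xi \circ x) \circ w = (\xi \circ x) \circ w'$, because the partial action based at $\xi \circ x$ may produce different positions. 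Closing this gap will demand a careful argument, perhaps by restricting $\tilde{R}$ to pairs that remain identified under all admissible left-concatenations, or by showing that any chain of similarities between words in $\supp(\widehat{\varphi})$ can be rechosen to pass only through words in $\supp(\widehat{\varphi})$ and use trivial left contexts for each $\tilde{R}$-step.
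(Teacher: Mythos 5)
Your approach matches the paper's proof line for line: shift via $\theta$ into $\QI'(\Fo,N)$, extract $\varphi_2 = \widehat{\varphi}$ from $\gamma(\xi)$ (this part is handled by Lemma~\ref{lem:propiedades_gamma}), define $\Gamma_2'$ by adjoining to $R_2$ the relation $R'$ of pairs of words with matching $(\mov_\xi,\ind_\xi)$, and show that $f(\underline{u}_{\Gamma_2'}) = \underline{\mov_\xi(u)}_{\Gamma_1}$ is a quasi-isometry using the $2N$ bound on the entries of $\partial P$, Lemma~\ref{lem:interpolacion} for the lower bound, and condition \textbf{C1} for relative density. That is precisely the paper's route.

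The subtlety you flag is the correct one and it is genuine. For $\widehat{\varphi}$ to lie in $\M(\Gamma_2')$, and for $f$ to be well-defined, one must know that $\Gamma_2'$-equivalent words $u,v\in\supp(\widehat{\varphi})$ have matching $(\mov_\xi,\ind_\xi)$. Blueprint equivalence is generated by applying relations in arbitrary contexts, so one must verify that if $(w,w')\in R'$ and $xwy,xw'y\in\supp(\widehat{\varphi})$, then $(\mov_\xi(xwy),\ind_\xi(xwy))$ and $(\mov_\xi(xw'y),\ind_\xi(xw'y))$ coincide up to $\Gamma_1$-equivalence. Membership $(w,w')\in R'$ only records that $\xi\circ w$ and $\xi\circ w'$ agree, which says nothing about $(\xi\circ x)\circ w$ versus $(\xi\circ x)\circ w'$ at the shifted base point $\xi\circ x$. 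Lemma~\ref{lem:movement_is_well_defined} handles the $R_2$ case but not this one. The paper compresses this check into "it follows that $\widehat{\varphi}\in\mathcal{M}(\Gamma_2')$" and "$f$ is well defined by the definition of $\Gamma_2'$" without carrying it out; you honestly leave it open. As it stands your argument is therefore incomplete — and a fully rigorous write-up would need to either prove that $R'$ is saturated under conjugation on $\supp(\widehat{\varphi})$, or replace $R'$ by a conjugation-closed sub-relation and re-verify that the quasi-isometry estimates (especially the lower bound via Lemma~\ref{lem:interpolacion}) still go through.
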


\begin{proof}

Consider $N\geq 1$ such that $\QI(\Fo,N)$ is nonempty, and let $(\varphi,x)\in \QI(\Fo,N)$. By condition \textbf{C1} if we let $(u_0,i) = \theta(\varphi,x)$, then $x(u_0)_i \neq \ast$. Without loss of generality we replace $(\varphi, x)$ by $(\varphi\cdot u_0, x\cdot u_0)$ to have $x(\varepsilon)_i\neq\ast$, and thus ${\xi}=(\varphi,x,i) \in \QI'(\Fo,N)$. Let $(\widehat{\varphi},\widehat{x})$ as in Equation~\eqref{eq:induced_configurations_QI}. By Lemma~\ref{lem:propiedades_gamma} we have that $(\widehat{\varphi},\widehat{x}) \in X[\Gamma_2, \Fo]$ and thus $\widehat{x}\in X[\Gamma_2, \widehat{\varphi},\Fo]$. \\

It remains to show that there exists a quotient $\Gamma_2'$ of $\Gamma_2$ such that $G(\Gamma_2',\widehat{\varphi})$ is quasi-isometric to $G(\Gamma_1,\varphi)$. Consider the set of relations $R = R_2 \cup R'$ over the generators $S_2$ given by 
$$(u,v)\in R' \ \iff \ \underline{\mov_{\xi}(u)}_{\Gamma_1} = \underline{\mov_{\xi}(v)}_{\Gamma_1} \ \text{ and } \ind_{\xi}(u)=\ind_{\xi}(v),$$
for $u,v\in (S_2)^*$. We define the blueprint $\Gamma_2' = (M_2, S_2, R)$ which by definition is a quotient of $\Gamma_2$. We denote the quasi-metric on $\Gamma'_2$ by $\rho_2'$.

By Lemma~\ref{lem:movement_is_well_defined}, we have that if $u,v\in \supp(\widehat{\varphi})$ are words such that $\underline{u}_{\Gamma_2} = \underline{v}_{\Gamma_2}$, then $(u,v)\in R'$. By Lemma~\ref{lem:propiedades_gamma} and Definition~\ref{def:gamma} we have that $\hat{\varphi}$ is $\Gamma_2'$, so not only $\widehat{\varphi} \in \mathcal{M}(\Gamma_2)$, but  $\widehat{\varphi} \in \mathcal{M}(\Gamma_2')$. \\

Consider the map  $f\colon G(\Gamma'_2,\widehat{\varphi}) \to G(\Gamma_1,\varphi)$ given by $f(\underline{u}_{\Gamma'_2}) = \underline{\mov_{\xi}(u)}_{\Gamma_1}$. Notice that $f$ is well defined by the definition of $\Gamma_2'$. We will show that $f$ is a quasi-isometry.\\

\noindent
\textbf{$f$ is a quasi-isometric embedding:} \\
Take $u\in\supp(\widehat{\varphi})$ and $s\in S_2$ such that $i(s) = \widehat{\varphi}(u)$. We have 
\begin{align*}
    \rho_1(f(\underline{u}_{\Gamma'_2}), f(\underline{us}_{\Gamma'_2})) &= \rho_1\!\left(\underline{\mov_{\xi}(u)}_{\Gamma_1}, \underline{\mov_{\xi}(us)}_{\Gamma_1}\right),\\
    &\leq\rho_1\!\left(\underline{\mov_{\xi}(u)}_{\Gamma_1}, \underline{\mov_{\xi}(u)\mov_{{\xi}\circ u}(s)}_{\Gamma_1}\right).
\end{align*}
Because $\mov_{{\xi}\circ u}(s)\in (S_1)^{\leq 2N}$, we get
$$\rho_1(f(\underline{u}_{\Gamma'_2}), f(\underline{us}_{\Gamma'_2}))\leq 2N.$$
Now, for $w = s_1 ... s_n\in (S_2)^*$ such that $uw\in\supp(\widehat{\varphi})$ and $w$ is a geodesic connecting $\underline{u}_{\Gamma'_2}$ and $\underline{uw}_{\Gamma'_2}$ in $\Gamma_2'$:
\begin{align*}
    \rho_1(f(\underline{u}_{\Gamma'_2}), f(\underline{uw}_{\Gamma'_2})) &\leq \sum_{i=1}^{n-1}\rho_1\!\left(f(\underline{us_1 ... s_i}_{\Gamma'_2}), f(\underline{us_1 ... s_{i+1}}_{\Gamma'_2})\right)\\
    &\leq 2N\rho_2'(\underline{u}_{\Gamma'_2},\underline{uw}_{\Gamma'_2}).
\end{align*}

For the second inequality of the quasi-isometric embedding, take $u, v\in \supp(\widehat{\varphi})$. Denote $j = \ind_{\xi}(u)$ and $k = \ind_{\xi}(v)$. Because, $G(\Gamma_1, \varphi)$ is strongly connected, there exists $w\in (S_1)^*$ such that \[\underline{f(\underline{u}_{\Gamma'_2})w}_{\Gamma_1} = f(\underline{v}_{\Gamma'_2})\]and $|w| = \rho_1(f(\underline{u}_{\Gamma'_2}), f(\underline{v}_{\Gamma'_2}))$. Then, if we denote $(\varphi', x', j) = (\varphi, x, i)\circ u$, we have that $x'(\varepsilon)_j\neq \ast$ and $x'(w)_k \neq\ast$. By Lemma~\ref{lem:interpolacion}, there exists $t\in (S_2)^*$ such that $\underline{\mov_{{\xi}\circ u}(t)}_{\Gamma_1} = \underline{w}_{\Gamma_1}$, $\ind_{{\xi}\circ u}(t) = k$ and $|t|\leq N(3N+1)(|w|+1)$. Furthermore,
we have that $\underline{\mov_{\xi}(ut)}_{\Gamma_1} = \underline{\mov_{\xi}(u)\mov_{{\xi}\circ u}(t)}_{\Gamma_1}$, and $\ind_{\xi}(ut)=\ind_{{\xi}\circ u}(t)$. This implies, $\underline{\mov_{\xi}(ut)}_{\Gamma_1} = \underline{\mov_{\xi}(v)}_{\Gamma_1}$ and $\ind_{\xi}(ut)=\ind_{\xi}(v)$, meaning $(ut,v)\in R$. Thus,
$$\rho_2'(\underline{u}_{\Gamma'_2},\underline{v}_{\Gamma'_2})\leq |t| \leq N(3N+1)(\rho_1(f(\underline{u}_{\Gamma'_2}), f(\underline{v}_{\Gamma'_2}))+1).$$

\noindent
\textbf{$f$ is relatively dense:} 

Let $v \in \supp(\varphi)$. By condition \textbf{C1} there are $w,w'\in (S_1)^{\leq N}$ and $j \in I$ such that $x(vw)_j \neq \ast$ and $\underline{ww'}_{\Gamma_1}=\underline{\varepsilon}_{\Gamma_1}$. By Lemma~\ref{lem:interpolacion}
there exists $u \in (S_2)^*$ such that $\underline{vw}_{\Gamma_1} = \underline{\mov_{\xi}(u)}_{\Gamma_1}$ and $j = \ind_{\xi}(u)$, thus $f(\underline{u}_{\Gamma'_2}) = \underline{vw}_{\Gamma_1}$. It follows that \[ \rho_1(\underline{v}_{\Gamma_1},f(\underline{u}_{\Gamma'_2})) \leq |w| \leq N \mbox{ and } \rho_1(f(\underline{u}_{\Gamma'_2}),\underline{v}_{\Gamma_1}) \leq |w'| \leq N.  \]
Therefore $f$ is relatively dense with constant $N$.
\end{proof}

We finally move on to the proof of the main result.

\begin{proof}[Proof of Theorem~\ref{thm:QI}]
We first show a set of forbidden patterns $\FF'$ for $\QI(\Fo,N)$ can be effectively constructed from a description of $\Fo$. Notice that to encode conditions \textbf{C1}-\textbf{C4} only finitely many patterns are required (for \textbf{C4} we use the fact that $\Gamma_2$ is finitely presented) and that the only place where $\Fo$ intervenes is in condition \textbf{C5}. We say a finite sequence of pairs $\beta = ((b_k, i_k)\in B\times I)_{k=0}^{\ell}$ is consistent with a 3-tuple $(w,m,a)\in (S_2)^*\times M_2 \times A$ with $w = s_1 \dots s_{{\ell}}$ if the following hold
\begin{enumerate}
    \item For all $k \in \{0,\dots,{\ell}\}$, $(b_k)_{i_k}\neq \ast$,
    \item For all $k \in \{0,\dots,{\ell}-1\}$ we have \[\init(s_{k+1}) = M_2((b_{k})_{i_{k}}), \quad i_{k+1} = \partial I((b_k)_{i_k})(s_k), \mbox{ and }M_2((b_{k+1})_{i_{k+1}}))\in\ter(s_{k+1}).\]
    \item $M_2(b_{\ell}) = m$ and $A(b_{\ell})=a.$
\end{enumerate}
When $\beta$ is consistent with $(w,m,a)$, we define $W_0(\beta)=\varepsilon$ and for $k \in \{1,\dots,{\ell}\}$,
\[W_k(\beta) = W_{k-1}(\beta)\cdot \partial P((b_{k-1})_{i_{k-1}})(s_{k})\in (S_1)^{\leq kN}.\]

For a consistent $\beta$ we let $T_{\beta}$ denote the set of all maps $T \colon S_1^* \to (M_1\times B)\cup \varnothing$ with support contained in $(S_1)^{\leq |w|N}$ with the property that \[ T(W_k(\beta)) = (m_k,b_k) \mbox{ for all } k \in \{0,\dots,|w|\} \mbox{ for some } m_k \in M_1   \]

Denote by $\mathcal{B}_{(w,m,a)}$ the set of all sequences of pairs $\beta$ consistent with $(w,m,a)$ and let \[T_{(w,m,a)} = \bigcup_{\beta \in \mathcal{B}_{(w,m,a)}} T_{\beta}.\]

Note that $T_{(w,m,a)}$ represents all patterns for which the action $\circ\ w$ is locally defined and which carry the pair $(m,a)$ after following $w$. Finally, for a pattern $p\in\Fo$ we consider the set of patterns \[  T_p = \bigcap_{w \in \supp(p)}T_{(w,p(w))}.\]

It follows that $T_p$ is computable from a description of $p$ and that it encodes condition \textbf{C5} for a fixed pattern $p$. From here, it follows that the set of forbidden patterns for \textbf{C5}, that is, $T = \bigcup_{p \in \FF}T_p$, can be effectively constructed from $\Fo$. Therefore we may take $\FF'$ as the union of $T$ with the finitely many forbidden patterns that encode \textbf{C1}-\textbf{C4}. It follows that $\FF'$ can be effectively computed from $\FF$ and $N$. In particular, if $\FF$ is effective, then so is $\FF'$.

Finally, suppose $\Fo$ is finite. By the analysis above for every $p \in \Fo$ we have that \[|T_p|\leq  (|B||M_1|+1)^{N\max\{|w|: w \in \supp(p)\}},\] from where it follows that $T$ is a finite set of patterns. As the set of patterns required to implement \textbf{C1-C4} is finite, we deduce that $\FF'$ is finite.

The second point of the statement is given by Lemma~\ref{lem:ida}.

For the third point, suppose $\QI(\Fo,N)\neq \varnothing$. To see $\QI(\Fo,N)$ codes $\Fo$ through quasi-isometry for $N\geq 1$, we set $Q = \QI'(\Fo, N)$ from~\Cref{def:set}, and the maps $\theta$, $\mu$ and $\gamma$ as in Definitions~\ref{def:theta},~\ref{def:mu} and~\ref{def:gamma} respectively. Then, point 1 of Definition~\ref{def:coding} comes from the definition of $\theta$, point 2 from Lemma~\ref{lem:propiedades_gamma}, and point 3 from Lemma~\ref{lem:interpolacion}. The computability of the three maps is direct from the definition.

Finally, the last part of the statement is proven in Lemma~\ref{lem:vuelta}. 
\end{proof}

\section{QI-rigidity}
\label{sec:qi_rigidity}

\subsection{Domino problems for blueprints and models}

We extend the definition of the classical domino problem to blueprints and models.

\begin{defn}
 Let $\Gamma$ be a blueprint. We say the $\Gamma$-\define{domino problem} is decidable, if there exists an algorithm which given a description of an alphabet $A$ and a finite set of nearest neighbor forbidden patterns $\Fo$ for $\Gamma$ over $A$, decides whether the $\Gamma$-SFT $X[\Gamma,\Fo]$ is non-empty.
\end{defn}

\begin{defn}
 Let $\Gamma$ be a blueprint and $\varphi \in \M(\Gamma)$. We say the $\varphi$-\define{domino problem} is decidable, if there exists an algorithm which given a description of an alphabet $A$ and a finite set of nearest neighbor forbidden patterns $\Fo$ for $\Gamma$ over $A$, decides whether the $\varphi$-SFT $X[\Gamma,\varphi,\Fo]$ is non-empty.
\end{defn}

Our first result is a generalization of Cohen's theorem to finitely presented blueprints.
\begin{theorem}
\label{thm:domino}
        Let $\Gamma_1$, $\Gamma_2$ be two finitely presented strongly connected blueprints that are quasi-isometric. Then, the $\Gamma_1$-domino problem is many-one equivalent to the $\Gamma_2$-domino problem.
\end{theorem}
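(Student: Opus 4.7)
The approach is to use Theorem~\ref{thm:QI} as a black box: each decidability assertion will be transferred by constructing the auxiliary $\Gamma_1$-SFTs $\QI(\Fo, N)$ and feeding them to the $\Gamma_1$-domino problem. By symmetry it suffices to show that decidability of the $\Gamma_1$-domino problem implies decidability of the $\Gamma_2$-domino problem. So let $\Fo$ be a finite set of nearest neighbor forbidden patterns on $\Gamma_2$; the goal is to decide whether $X[\Gamma_2, \Fo]$ is non-empty.

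Combining the two parts of Theorem~\ref{thm:QI} with the quasi-isometry hypothesis yields
\[
X[\Gamma_2, \Fo] \neq \varnothing \iff \exists N \in \N: \QI(\Fo, N) \neq \varnothing.
\]
For the forward implication, if $(\varphi_2, x) \in X[\Gamma_2, \Fo]$, then by hypothesis some $\varphi_1 \in \M(\Gamma_1)$ has $G(\Gamma_1, \varphi_1)$ quasi-isometric to $G(\Gamma_2, \varphi_2)$, and the first part of the theorem produces an $N$ with $\QI(\Fo, N) \neq \varnothing$. For the reverse, the second part produces a model $\varphi_2$ of a quotient $\Gamma_2'$ of $\Gamma_2$ witnessing $X[\Gamma_2, \varphi_2, \Fo] \neq \varnothing$, and since every $\Gamma_2'$-model is a $\Gamma_2$-model, this yields a configuration in $X[\Gamma_2, \Fo]$.

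From this equivalence, non-emptiness of $X[\Gamma_2, \Fo]$ is semi-decidable: enumerate $N = 1, 2, \dots$; for each, effectively construct the finite forbidden pattern set of $\QI(\Fo, N)$ (guaranteed by Theorem~\ref{thm:QI}), convert to nearest neighbor form via the earlier SFT conjugacy result, and apply the decidable $\Gamma_1$-domino problem to test non-emptiness. To obtain full decidability I run this in parallel with a semi-decision procedure for emptiness: since $A[\Gamma_2]$ is compact and $\Gamma_2$ is finitely presented, $X[\Gamma_2, \Fo]$ is empty iff at some finite radius $n$ no locally $\Gamma_2$-consistent partial assignment of size $n$ avoids $\Fo$, which is effectively checkable by exhausting finitely many candidate patterns and verifying the finitely many local constraints coming from the generators, states, and the finite relation set $R_2$. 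Exactly one of the two procedures halts, giving decidability.

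The main difficulty is precisely the non-uniformity of the integer $N$ in Theorem~\ref{thm:QI}: it depends on the quasi-isometry constants and the bounded-to-one degree for the specific pair $(\varphi_1, \varphi_2)$ arising from a non-empty $X[\Gamma_2, \varphi_2, \Fo]$, and need not be controlled by $\Fo$. This prevents a direct one-shot reduction as in Cohen's group argument, where the Cayley graph is canonical and a single quasi-isometry fixes $N$ once and for all, and is what forces the parallel strategy above where the lack of a uniform $N$ is compensated by the compactness-based semi-decision for emptiness.
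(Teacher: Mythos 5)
Your proposal is correct, and it takes a genuinely different and more careful route than the paper's. The paper's proof of Theorem~\ref{thm:domino} performs a single one-shot reduction: it invokes Theorem~\ref{thm:QI} to assert the existence of one $N$ for which $\QI(\Fo,N)\neq\varnothing$ if and only if $X[\Gamma_2,\Fo]\neq\varnothing$, and then calls the $\Gamma_1$-domino oracle once on the effectively constructed, nearest-neighborized forbidden patterns of $\QI(\Fo,N)$. This implicitly treats $N$ as fixed and computable from $(\Gamma_1,\Gamma_2)$ alone — i.e. it presupposes a quasi-isometry constant uniform over the pairs of models, something the definition of quasi-isometric blueprints does not actually supply (and for which no lemma is offered; the paper's subsequent minimal variant explicitly routes around this via Lemma~\ref{lem:minimal}, which is not available here). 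You correctly flag this as the crux and resolve it by splitting the computation: you use the forward and backward halves of Theorem~\ref{thm:QI} only to get $X[\Gamma_2,\Fo]\neq\varnothing\iff\exists N\,\QI(\Fo,N)\neq\varnothing$, turn that into a semi-decision for non-emptiness by dovetailing over $N$ and querying the $\Gamma_1$-domino oracle, and pair it with an independent compactness-and-finite-presentability argument showing emptiness of $X[\Gamma_2,\Fo]$ is semi-decidable. The trade-off: the paper's route, if the uniformity of $N$ were established, would give a clean many-one reduction; your route avoids any uniformity assumption at the cost of a slightly more involved argument that additionally relies on $X[\Gamma_2,\Fo]$ being an effectively closed ($\Pi^0_1$) subset of a computably compact space. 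Your sketch of the emptiness semi-algorithm is essentially right, though to be airtight it should be phrased so that "locally $\Gamma_2$-consistent at radius $n$" only enforces equivalences $u\sim_{\Gamma_2} v$ with a derivation entirely contained in $S_2^{\leq n}$ (since $\Gamma_2$-equivalence itself is only r.e., not decidable); compactness then guarantees that this finitely-verified notion suffices to produce a certificate of emptiness.
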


\begin{proof}
    As $\Gamma_1$ and $\Gamma_2$ are quasi-isometric, by Theorem~\ref{thm:QI} there exists a positive integer $N$ such that, for any set of forbidden patterns $\Fo$ for $\Gamma_2$, the $\Gamma_1$-SFT $\QI(\Fo, N)$ is non-empty if and only if $X[\Gamma_2, \Fo]$ is non-empty. Furthermore, a set of defining forbidden patterns for $\QI(\Fo, N)$ can be effectively constructed from $N,\Fo$. This shows that the domino problem on $\Gamma_2$ many-one reduces to the domino problem on $\Gamma_1$. By symmetry, it follows that they are many-one equivalent.
\end{proof}

Notice that~\Cref{thm:domino} applies in the situation where $\Gamma_1$ and $\Gamma_2$ are blueprints that represent two finitely presented groups whose Cayley graphs are quasi-isometric. In particular, our result generalizes Cohen's result that states that the domino problem is a quasi-isometry invariant for finitely presented groups with decidable word problem~\cite[Theorem 1.8]{cohen2017large}.

Asking for all model graphs to be quasi-isometric is quite a strong condition. To get rid of it, we must ask something extra out of the dynamics of the space of models.

\begin{lemma}
\label{lem:minimal}
        Consider a blueprint $\Gamma = (M,S,R)$, and $\Fo$ a set of forbidden patterns for $\Gamma$. If there exists $\varphi\in\M(\Gamma)$ such that $X[\Gamma,\varphi,\Fo]\neq\varnothing$, then for all models $\varphi'\in\overline{\orb(\varphi)}$ we have $X[\Gamma,\varphi',\Fo]\neq\varnothing$.
\end{lemma}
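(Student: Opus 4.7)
The plan is to lift a witness configuration from $\varphi$ to $\varphi'$ using the partial action together with a compactness argument, once we pass to the full $\Gamma$-subshift formalism.

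Fix $x \in X[\Gamma,\varphi,\Fo]$ so that $(\varphi,x) \in X[\Gamma,\Fo]$. Since $\varphi' \in \overline{\orb(\varphi)}$, by definition of the orbit there exists a sequence $(w_n)_{n\in\N}$ with $w_n \in \supp(\varphi)$ such that $\varphi \cdot w_n \to \varphi'$ in the prodiscrete topology on $(M \cup \{\varnothing\})^{S^*}$. Because $w_n \in \supp(\varphi)$, the partial action is defined on the pair, so $(\varphi,x)\cdot w_n = (\varphi \cdot w_n, x \cdot w_n)$ makes sense. By the earlier proposition characterizing subshifts, $X[\Gamma,\Fo]$ is invariant under the partial action of $S^*$, hence $(\varphi \cdot w_n, x \cdot w_n) \in X[\Gamma,\Fo]$ for every $n$.

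Next I would use compactness of $((M\times A)\cup\{\varnothing\})^{S^*}$ (which is compact since $M$ and $A$ are finite and this is a product of finite sets with the prodiscrete topology) to extract a subsequence along which $x\cdot w_n$ converges to some $x'\in (A\cup\{\varnothing\})^{S^*}$. Along this subsequence, the pairs $(\varphi\cdot w_n, x\cdot w_n)$ converge to $(\varphi', x')$. Since $X[\Gamma,\Fo]$ is closed in $A[\Gamma]$ (again by the subshift characterization), the limit pair satisfies $(\varphi',x') \in X[\Gamma,\Fo]$, which is exactly the statement $x' \in X[\Gamma,\varphi',\Fo]$, and in particular the latter is nonempty.

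There is no real obstacle here: the argument is essentially the classical fact that the set of models admitting an extension to a configuration of a given subshift is closed under the dynamics, which transfers verbatim to the blueprint setting because we packaged the dynamics into the partial monoid action on $A[\Gamma]$. The only mild point to double-check is that the partial action is genuinely defined along the entire sequence $w_n$, which is guaranteed by the condition $w_n \in \supp(\varphi)$ inherent to the definition of $\orb(\varphi)$. The name ``minimal'' for the lemma refers to the immediate corollary: if $\Gamma$ is minimal, then $\overline{\orb(\varphi)} = \M(\Gamma)$ and consequently nonemptiness of $X[\Gamma,\varphi,\Fo]$ for one model implies nonemptiness for every model.
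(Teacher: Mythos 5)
Your proof is correct and follows essentially the same route as the paper: both extract a convergent subsequence of $(\varphi\cdot w_n, x\cdot w_n)$ by compactness and conclude the limit lies in $X[\Gamma,\Fo]$. The only cosmetic difference is that you invoke the earlier proposition that $\Gamma$-subshifts are closed and shift-invariant, whereas the paper verifies conditions (s1)--(s3) for the limit directly; these are interchangeable.
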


\begin{proof}
    Let  $x\in X[\Gamma,\varphi,\Fo]$. By definition, it is clear that for every $w \in \supp(x)$ we have $x\cdot w \in X[\Gamma,\varphi \cdot w,\Fo]$.
    
    By assumption, given $\varphi'\in\overline{\orb(\varphi)}$ there exists a sequence $(w_n)_{n \geq 0}$ of elements in $\supp(\varphi)$ such that $\varphi \cdot w_n$ converges to $\varphi'$. Let $x'$ be any limit point of the sequence $(x\cdot w_n)_{n \geq 0}$, thus, passing through a subsequence, we can assume without loss of generality that $(\varphi \cdot w_n,x\cdot w_n)$ converges to $(\varphi',x')$.

    We claim that $x'\in X[\Gamma,\varphi',\FF]$. Indeed, as we are taking the prodiscrete topology, it follows that for each $u\in S^*$ there exists $N(u)\in\N$ such that for all $n\geq N(u)$,
    $$\varphi(w_nu) = (\varphi \cdot w_n)(u) = \varphi'(u) \mbox{ and } x(w_nu) = (x\cdot w_n)(u) = x'(u)$$

    From here it follows easily that conditions (s1), (s2) and (s3) are satisfied by $x'$ and thus $x'\in X[\Gamma,\varphi',\FF]$.\end{proof}

    \begin{corollary}\label{coro:minimalongo}
        Let $\Gamma$ be a minimal blueprint and $\Fo$ a set of forbidden patterns. $X[\Gamma,\Fo]\neq\varnothing$ if and only if for every model $\varphi$ we have $X[\Gamma,\varphi,\Fo]\neq\varnothing$. In particular, the $\varphi$-domino problem and the $\Gamma$-domino problem are many-one equivalent.
    \end{corollary}

    \begin{theorem}\label{thm:domino_model}
        Let $\Gamma_1, \Gamma_2$ be two finitely presented strongly connected blueprints and let $\varphi_2 \in \M(\Gamma_2)$. Suppose that $\Gamma_1$ is minimal and that there exists $\varphi_1 \in \M(\Gamma_1)$ such that $G(\Gamma_1, \varphi_1)$ is quasi-isometric to $G(\Gamma_2,\varphi_2)$. Then the $\Gamma_1$-domino problem many-one reduces to the $\varphi_2$-domino problem.
    \end{theorem}

    \begin{proof}
        From Theorem~\ref{thm:QI} it follows that for any $N \geq 1$ and any finite set of forbidden patterns $\FF$ for $\Gamma_1$ we can compute from $\FF$ and $N$ a finite set of forbidden patterns $\FF'=\FF'(\FF,N)$ for the $\Gamma_2$-subshift $\QI(\Fo,N)$.

        Furthermore, as $G(\Gamma_1, \varphi_1)$ is quasi-isometric to $G(\Gamma_2,\varphi_2)$, there is a fixed $\widehat{N}\geq 1$ such that for any such $\FF$, if $X[\Gamma_1,\varphi_1,\FF]\neq\varnothing$ then there is $x\in B[\Gamma_2,\varphi_2]$ with $(\varphi_2,x)\in \QI(\Fo,\widehat{N})$.

        Fix $\widehat{N}$ as above, and consider the computable map that transforms $\FF$ into $\FF'=\FF'(\FF,\widehat{N})$ that defines $\QI(\Fo,\widehat{N})$. We will show that this is the required many-one reduction.

        Suppose $X[\Gamma_1,\FF]\neq\varnothing$, by minimality of $\Gamma_1$, it follows that $X[\Gamma_1,\varphi_1,\FF]\neq\varnothing$ and thus $(\varphi_2,x)\in X[\Gamma_2,\varphi_2,\FF']\neq \varnothing$.

        Conversely, if $X[\Gamma_2,\varphi_2,\FF']\neq \varnothing$, then $\QI(\Fo,\widehat{N})\neq \varnothing$ and it follows by the last part of Theorem~\ref{thm:QI} that there is a model $\varphi^* \in \M(\Gamma_1)$ such that $X[\Gamma_1,\varphi^*,\FF]\neq \varnothing$ and thus that $X[\Gamma_1,\FF]\neq\varnothing$. \end{proof}

We note that in the previous proof, by minimality of $\Gamma_1$ we can replace the $\Gamma_1$-domino problem by the $\varphi_1$-domino problem, thus we also obtain that the $\varphi_1$-domino problem many-one reduces to the $\varphi_2$-domino problem. However, without the assumption of minimality on $\Gamma_1$, we do not get this reduction because $X[\Gamma_2,\varphi_2,\FF']\neq \varnothing$ just implies that $X[\Gamma_1,\FF]\neq \varnothing$, but it could be that $X[\Gamma_1,\varphi_1,\FF]$ is empty.

\begin{corollary}
    Let $\Gamma_1, \Gamma_2$ be two minimal finitely presented strongly connected blueprints. If there exist $\varphi_1\in\M(\Gamma_1)$ and $\varphi_2\in\M(\Gamma_2)$ such that $G(\Gamma_1, \varphi_1)$ is quasi-isometric to $G(\Gamma_2,\varphi_2)$, then the $\Gamma_1$-domino problem is decidable if and only if the $\Gamma_2$-domino problem is decidable.
\end{corollary}

\subsection{Strong aperiodicity}

Our second application concerns strongly aperiodic SFTs.

\begin{defn}
    We say a $\Gamma$-subshift $X$ is free (or strongly aperiodic) if for any configuration $(\varphi,x)\in X$, $(\varphi, x)\cdot w = (\varphi,x)$ implies $\underline{w}_{\Gamma} = \underline{\varepsilon}_{\Gamma}$ for all $w\in\supp(\varphi)$.
\end{defn}

This generalizes the notion of strong aperiodicity from subshifts on groups. The next result is a generalization of a theorem of Cohen~\cite[Theorem 1.9]{cohen2017large} which states that admitting a strongly aperiodic SFT is an invariant of quasi-isometry for finitely presented groups. We note that the arXiv preprint of~\cite{cohen2017large} requires the additional hypothesis of torsion-freeness, but this is not really needed and is gone in the published version.

\begin{theorem}
\label{thm:SA}
        Let $\Gamma_1$, $\Gamma_2$ be two finitely presented strongly connected blueprints that are quasi-isometric. $\Gamma_1$ admits a strongly aperiodic SFT if and only if $\Gamma_2$ admits a strongly aperiodic SFT.
\end{theorem}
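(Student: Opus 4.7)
The plan is to apply the black-box Theorem~\ref{thm:QI} and transport strong aperiodicity through the coding SFT $\QI(\Fo,N)$. The hypothesis is symmetric in $\Gamma_1$ and $\Gamma_2$, so I only need one direction. Assume $X[\Gamma_2,\Fo]$ is a strongly aperiodic $\Gamma_2$-SFT; in particular some $\varphi_2\in\M(\Gamma_2)$ satisfies $X[\Gamma_2,\varphi_2,\Fo]\neq \varnothing$, and since $\Gamma_1$ and $\Gamma_2$ are quasi-isometric, every model graph of $\Gamma_1$ is quasi-isometric to $G(\Gamma_2,\varphi_2)$. Theorem~\ref{thm:QI} then provides $N\in\N$ for which $\QI(\Fo,N)$ is a non-empty $\Gamma_1$-SFT that codes $\Fo$ through quasi-isometries.

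The whole content of the theorem thus reduces to showing that $\QI(\Fo,N)$ is strongly aperiodic. So I fix $(\varphi,q)\in\QI(\Fo,N)$ and $w\in\supp(\varphi)$ with $(\varphi,q)\cdot w=(\varphi,q)$, and aim to conclude $\underline{w}_{\Gamma_1}=\underline{\varepsilon}_{\Gamma_1}$. The heuristic is that such a $\Gamma_1$-period should push down to a $\Gamma_2$-period of the decoded configuration $\gamma(\varphi,q,i)\in X[\Gamma_2,\Fo]$, and then aperiodicity of $X[\Gamma_2,\Fo]$ forces that $\Gamma_2$-period to be trivial, which via Lemma~\ref{lem:movement_is_well_defined} translates back to $\underline{w}_{\Gamma_1}=\underline{\varepsilon}_{\Gamma_1}$. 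Two subtleties arise: first, $\gamma$ is defined only on $\QI'(\Fo,N)$, which requires $q(\varepsilon)_i\neq *$ for some $i$ and may fail a priori; second, shifting to a base point where this holds changes what it means to ``translate by $w$''.

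Both issues are resolved by a conjugation argument. Using condition \textbf{C1}, pick $u_0,u_0'\in (S_1)^{\leq N}$ and $i\in I$ with $q(u_0)_i\neq *$ and $\underline{u_0u_0'}_{\Gamma_1}=\underline{\varepsilon}_{\Gamma_1}$, and set $\xi=(\varphi\cdot u_0,q\cdot u_0,i)\in\QI'(\Fo,N)$ together with $w''=u_0'wu_0$. A direct computation, using that $\varphi$ and $q$ respect $\Gamma_1$-equivalence and that $u_0u_0'$ is $\Gamma_1$-equivalent to $\varepsilon$, shows $(\varphi\cdot u_0,q\cdot u_0)\cdot w''=(\varphi\cdot u_0,q\cdot u_0)$, so $w''$ is a $\Gamma_1$-period of the shifted configuration. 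Lemma~\ref{lem:interpolacion} then supplies $u\in(S_2)^*$ with $\underline{\mov_{\xi}(u)}_{\Gamma_1}=\underline{w''}_{\Gamma_1}$ and $\ind_{\xi}(u)=i$; since the partial action depends only on $\Gamma_1$-equivalence classes, $\xi\circ u=\xi$, and Lemma~\ref{lem:propiedades_gamma} yields $\gamma(\xi)\cdot u=\gamma(\xi)$. Strong aperiodicity of $X[\Gamma_2,\Fo]$ forces $\underline{u}_{\Gamma_2}=\underline{\varepsilon}_{\Gamma_2}$, whence Lemma~\ref{lem:movement_is_well_defined} gives $\underline{w''}_{\Gamma_1}=\underline{\varepsilon}_{\Gamma_1}$. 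Concatenating $\Gamma_1$-equivalences,
\[\underline{w}_{\Gamma_1}=\underline{u_0u_0'wu_0u_0'}_{\Gamma_1}=\underline{u_0w''u_0'}_{\Gamma_1}=\underline{u_0u_0'}_{\Gamma_1}=\underline{\varepsilon}_{\Gamma_1},\]
which is the desired conclusion.

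The main obstacle is the absence of cancellation in a general blueprint: the naive pullback argument only yields $\underline{w''}_{\Gamma_1}=\underline{\varepsilon}_{\Gamma_1}$ for a conjugate $w''$ of $w$, not for $w$ itself. The conjugation trick works precisely because condition \textbf{C1} (enforced thanks to strong connectedness of $\Gamma_1$) allows us to choose $u_0,u_0'$ whose concatenation is $\Gamma_1$-equivalent to $\varepsilon$, so the outer $u_0,u_0'$ in the final chain of equivalences collapse and we recover the desired equivalence for $w$.
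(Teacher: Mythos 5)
Your proof is correct and follows essentially the same route as the paper's: reduce to showing $\QI(\Fo,N)$ is strongly aperiodic, use C1 (which is what $\theta$ plus strong connectedness gives the paper) to relocate to a point where decoding is possible, conjugate the period $w$ to $w''=u_0'wu_0$, pull it back to a $\Gamma_2$-period of $\gamma(\xi)$ via Lemma~\ref{lem:interpolacion} and Lemma~\ref{lem:propiedades_gamma}, invoke aperiodicity of $X[\Gamma_2,\Fo]$ and Lemma~\ref{lem:movement_is_well_defined}, and undo the conjugation. The only cosmetic difference is that you reach inside the coding construction directly where the paper phrases the same steps through the abstract interface of Definition~\ref{def:coding}.
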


\begin{proof}
Suppose $\Gamma_2$ admits a non-empty strongly aperiodic SFT given by the finite set of forbidden patterns $\Fo$. As $\Gamma_1$ and $\Gamma_2$ are quasi-isometric, by Theorem~\ref{thm:QI} there exists $N\geq 1$ such that the $\Gamma_1$-SFT $\QI(\Fo, N)$ is non-empty. Take $(\varphi,q)\in\QI(\Fo, N)$ and $w\in (S_1)^*$ such that $(\varphi,q)\cdot w = (\varphi, q)$.

Take the maps $\theta, \gamma$ and $\mu$ given by the fact that $\QI(\Fo, N)$ codes $\Fo$ through quasi-isometries for $N$ (Definition~\ref{def:coding}). Denote $\theta(\varphi, q) = (u_0, i)$. Because $G(\Gamma_1, \varphi)$ is strongly connected, there exists $v\in (S_1)^*$ such that $\underline{u_0v}_{\Gamma_1} = \underline{\varepsilon}_{\Gamma_1}$. If we denote $(\varphi',q') = (\varphi, q)\cdot u_0$ and $w'= vwu_0$, we have
\[  (\varphi', q')\cdot w' = (\varphi, q) \cdot u_0vwu_0 = (\varphi,q)\cdot wu_0 = (\varphi,q)\cdot u_0 = (\varphi',q').\]

In particular, $(\varphi'\cdot w',q'\cdot w',i) \in Q$. By point (3) of Definition~\ref{def:coding}, there exists a word $u\in(S_2)^*$ such that $(w',i) = \mu(\varphi',q',i,u)$. It follows that if we denote $(\widehat{\varphi},\widehat{x}) = \gamma(\varphi',q',i)\in X[\Gamma_2,\Fo]$, then $(\widehat{\varphi},\widehat{x})\cdot u = (\widehat{\varphi},\widehat{x})$. Because $X[\Gamma_2,\Fo]$ is a strongly aperiodic SFT, $\underline{u}_{\Gamma_2} = \underline{\varepsilon}_{\Gamma_2}$. This implies $\underline{w'}_{\Gamma_1} = \underline{\varepsilon}_{\Gamma_1}$, which in turn implies $\underline{w}_{\Gamma_1} = \underline{\varepsilon}_{\Gamma_1}$. Therefore, $\QI(\Fo,N)$ is a strongly aperiodic $\Gamma_1$-SFT. By exchanging the roles of $\Gamma_2$ and $\Gamma_1$ in the previous argument, we conclude the equivalence.
\end{proof}

As was the case with the domino problem, we can weaken the quasi-isometry hypothesis provided the blueprints are minimal.

\begin{theorem}\label{thm:SA_v2}
    Let $\Gamma_1, \Gamma_2$ be two minimal finitely presented strongly connected blueprints. If there exist $\varphi_1\in\M(\Gamma_1)$ and $\varphi_2\in\M(\Gamma_2)$ such that $G(\Gamma_1, \varphi_1)$ is quasi-isometric to $G(\Gamma_2,\varphi_2)$, then the $\Gamma_1$ admits a strongly aperiodic SFT if and only if $\Gamma_2$ admits a strongly aperiodic SFT.
\end{theorem}

\begin{proof}
    Suppose $\Gamma_2$ admits a non-empty strongly aperiodic SFT given by the forbidden patterns $\Fo$. In particular, there exists a model $\varphi\in\M(\Gamma_2)$ such that $X[\Gamma_2,\varphi,\Fo]\neq\varnothing$. By Lemma~\ref{lem:minimal}, we also have $X[\Gamma_2,\varphi_2,\Fo]\neq\varnothing$. Now, as $G(\Gamma_1, \varphi_1)$ is quasi-isometric to $G(\Gamma_2,\varphi_2)$, we know from Theorem~\ref{thm:QI} that there exists $N\geq 1$ such that $\Gamma_1$-SFT $\QI(\Fo, N)$ is non-empty. Consider $(\varphi, x)\in\QI(\Fo,N)$, and $w\in (S_1)^*$ such that $(\varphi, x)\cdot w = (\varphi, x)$. Also, consider the maps $\theta, \gamma$ and $\mu$ given by the fact that $\QI(\Fo, N)$ codes $\Fo$ through quasi-isometries for $N$. As we saw in the previous proof, we can suppose without loss of generality that there exists $i\in\{1,\dots,N\}$ such that $(\varphi, x, i), (\varphi\cdot w, x\cdot w, i)\in Q$. By point (3) of Definition~\ref{def:coding}, there exists $u\in(S_2)^*$ such that $(w,i) = \mu(\varphi,x,i,u)$. If we denote $(\widehat{\varphi},\widehat{x}) = \gamma(\varphi,q,i)\in X[\Gamma_2,\Fo]$, then $(\widehat{\varphi},\widehat{x})\cdot u = (\widehat{\varphi},\widehat{x})$. Because $X[\Gamma_2,\Fo]$ is a strongly aperiodic SFT, we have that $\underline{u}_{\Gamma_2} = \underline{\varepsilon}_{\Gamma_2}$. This implies $\underline{w}_{\Gamma_1} = \underline{\varepsilon}_{\Gamma_1}$. Therefore, $\QI(\Fo,N)$ is a strongly aperiodic $\Gamma_1$-SFT. By exchanging the roles of $\Gamma_2$ and $\Gamma_1$ in the previous argument, we conclude the equivalence.
\end{proof}

\subsection{Medvedev degrees}

Our third application concerns Medvedev degrees of subshifts defined over blueprints. In this section we only provide a very brief and functional introduction to Medvedev degrees. A more thorough presentation of the subject can be found in~\cite{barbieri2024medvedev}.

A map $f\colon X\subset \{0,1\}^{\NN}\to \{0,1\}^{\NN}$ is \define{computable} if there exists an algorithm which, given as oracle $x\in X$ (that is, such that for any $n \in \NN$ it has access to the value $x(n)$) and on input $k \in \NN$, outputs $f(x)(k)$ in finite time. Consider two sets $X,Y\subset \{0,1\}^{\NN}$. We say that $Y$ is \define{Medvedev reducible to} $X$ and write $Y \preceq_{\mathfrak{m}} X$ if there exists a computable map $\psi \colon X \to \{0,1\}^{\NN}$ with the property that $\psi(X)\subset Y$. If both $X\preceq_{\mathfrak{m}} Y$ and $Y\preceq_{\mathfrak{m}} X$ are verified, we say $X$ and $Y$ are \define{Medvedev equivalent}. We denote by $\mathfrak{m}(X)$ the equivalence class of sets which are Medvedev equivalent to $X$ and call it the \define{Medvedev degree} of $X$. The collection $\mathfrak{M}$ of Medvedev degrees is a distributive lattice with the order $\preceq_{\mathfrak{m}}$. The minimum of this lattice is denoted by $0_{\mathfrak{M}}$, and consists on all sets that contain a computable point.\\

Intuitively, if one thinks of $X,Y\subset \{0,1\}^{\NN}$ as sets of solutions to some ``problems'' $P_X,P_Y$, the fact that $Y \preceq_{\mathfrak{m}} X$ means that using as a black box a solution of $P_X$ we can compute a solution of $P_Y$. The intuitive reason for $0_{\mathfrak{M}}$ being the minimum in this viewpoint, is that in this case we may always ignore the input of the problem and output a computable point, that is, we may take $\psi$ as the constant map whose unique value is a computable point in $Y$. \\

The definition of Medvedev degrees is naturally extended to spaces which are recursively homeomorphic to $\{0,1\}^{\NN}$ with the canonical computable structure (see~\cite{barbieri2024medvedev}), and thus one can speak about Medvedev reduction and Medvedev degrees of subsets of $A^{S^*}$ for finite sets $A,S$. 

\begin{theorem}\label{thm:medvedev_noassumpt}
    Let $\Gamma_1$, $\Gamma_2$ be two finitely presented strongly connected blueprints that are quasi-isometric. For every $\Gamma_2$-SFT $X$ there exists a $\Gamma_1$-SFT $Y$ such that $X \prec_{\mathfrak{m}} Y$. In particular, if $\mathfrak{m}(X) \neq 0_{\mathfrak{M}}$, then $\mathfrak{m}(Y) \neq 0_{\mathfrak{M}}$.
    \end{theorem}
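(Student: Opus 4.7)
The plan is to set $Y := \QI(\Fo, N)$, where $\Fo$ is a finite set of forbidden patterns defining $X$ and $N \in \N$ is supplied by Theorem~\ref{thm:QI}. When $X = \varnothing$, take $Y$ to be any empty $\Gamma_1$-SFT (for instance one obtained by forbidding every pattern of support $\{\varepsilon\}$), so $X \preceq_{\mathfrak{m}} Y$ holds vacuously. Otherwise, since $X = X[\Gamma_2, \Fo] \neq \varnothing$ there exists $\varphi_2 \in \M(\Gamma_2)$ with $X[\Gamma_2, \varphi_2, \Fo] \neq \varnothing$, and picking any $\varphi_1 \in \M(\Gamma_1)$, the hypothesis that $\Gamma_1$ and $\Gamma_2$ are quasi-isometric provides a quasi-isometry $G(\Gamma_2, \varphi_2) \to G(\Gamma_1, \varphi_1)$. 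The first bullet of Theorem~\ref{thm:QI} then delivers an integer $N$ such that $Y = \QI(\Fo, N)$ is a non-empty $\Gamma_1$-subshift, which is an SFT because $\Fo$ is finite, and which codes $\Fo$ through quasi-isometries for $N$.

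The coding data (Definition~\ref{def:coding}) furnishes a subset $Q \subseteq Y \times \{1, \dots, N\}$ together with computable maps $\theta \colon Y \to (S_1)^{\leq N} \times \{1, \dots, N\}$ and $\gamma \colon Q \to X$. Define $\psi \colon Y \to X$ by
\[\psi(\varphi, q) = \gamma\bigl((\varphi, q) \cdot w,\, i\bigr), \qquad (w, i) := \theta(\varphi, q).\]
Clause (1) of Definition~\ref{def:coding} guarantees that the triple passed to $\gamma$ belongs to $Q$, so $\psi$ is well defined and has image in $X$. The partial shift $((\varphi, q), w) \mapsto (\varphi, q) \cdot w$ is computable from its inputs (it amounts to relabeling coordinates of a prodiscrete configuration), and $\theta$, $\gamma$ are computable by hypothesis, so $\psi$ is a computable map, establishing the Medvedev reduction $X \preceq_{\mathfrak{m}} Y$.

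The second assertion is immediate: if $Y$ contained a computable point $(\varphi, q)$, then $\psi(\varphi, q)$ would be a computable point of $X$, so $\mathfrak{m}(X) \neq 0_{\mathfrak{M}}$ forces $\mathfrak{m}(Y) \neq 0_{\mathfrak{M}}$. There is essentially no serious obstacle beyond invoking Theorem~\ref{thm:QI} correctly; all the heavy lifting, in particular the computability of the coding maps $\theta$ and $\gamma$ and the effective construction of the defining forbidden patterns of $\QI(\Fo, N)$, is already packaged there. The only subtle point worth flagging is ensuring that the computable-equivalence between configurations in $A[\Gamma]$ and elements of $\{0,1\}^{\N}$ used implicitly when speaking of Medvedev degrees is canonical, which follows from the finiteness of the alphabet and generator set.
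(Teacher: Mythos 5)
Your proof is correct and follows essentially the same route as the paper: invoke Theorem~\ref{thm:QI} to obtain $Y=\QI(\Fo,N)$ together with the computable coding maps $\theta$ and $\gamma$, then compose them into a computable reduction $\psi\colon Y\to X$. The one small addition you make—explicitly disposing of the case $X=\varnothing$ by taking $Y$ empty—is a minor refinement that the paper leaves implicit, and it is sound (when $Y$ is empty, $\mathfrak{m}(Y)$ is the top Medvedev degree, so the reduction is trivial).
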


    \begin{proof}
        Write $X=X[\Gamma_2,\FF]$. By~\Cref{thm:QI} there is $N\in \NN$ such that $Y = \QI(\FF,N)$ codes $\FF$ through quasi-isometries for $N$. Furthermore, a set of forbidden patterns that describes $Y$ can be computed from a description of $\FF$. Let $\theta$ and $\gamma$ as in~\Cref{def:coding} and remark that they are computable maps. Given $(\varphi_1,y)\in Y$, first we get $\theta(\varphi_1,y) = (w,i)$ and define $(\varphi_2,x) = \gamma(\varphi \cdot w,x\cdot w,i) \in X[\Gamma_2,\FF]$. As this is the composition of two computable maps, we deduce that $X \preceq_{\mathfrak{m}} Y$.
    \end{proof}
    
    In particular, this theorem implies that for finitely presented blueprints, admitting an SFT with no computable points is an invariant of quasi-isometry.\\

    Next we shall introduce a condition which ensures that we get equality in~\Cref{thm:medvedev_noassumpt}. This will require both a notion of decidable word problem for blueprints, and the existence of a computable map that takes a model from one blueprint, and outputs both the data of a quasi-isometric model of the other blueprint, and of the quasi-isometry itself.

    \begin{defn}\label{def:WP_blueprints}
        Let $\Gamma=(M,S,R)$ be a finitely generated blueprint. We say that $\Gamma$ has decidable word problem, if there's an algorithm that takes two $\Gamma$-consistent words $w,w'\in S^*$ and decides whether they are $\Gamma$-equivalent.
    \end{defn}
        
    \begin{defn}\label{def:mega-super-uniform-qi}
        Let $\Gamma_1=(M_1,S_1,R_1)$ and $\Gamma_2=(M_2,S_2,R_2)$ be two blueprints. We say that $\Gamma_1$ has a \define{computable quasi-isometric image} in $\Gamma_2$ if there exists a constant $N \geq 1$ and a pair of computable maps $\tau \colon \mathcal{M}(\Gamma_1) \to \mathcal{M}(\Gamma_2)$ and $f \colon \mathcal{M}(\Gamma_1) \times (S_1)^* \to (S_2)^*$ with the property that for each $\varphi_1 \in \mathcal{M}(\Gamma_1)$ and $u,v \in \supp(\varphi_1)$ then:
        \begin{enumerate}
            \item if $\underline{u}_{\Gamma_1} = \underline{v}_{\Gamma_1}$, then $\underline{f(\varphi_1,u)}_{\Gamma_2} = \underline{f(\varphi_1,v)}_{\Gamma_2}$. Thus $f(\varphi_1,\cdot)$ induces a well defined map from $G(\Gamma_1,\varphi_1)$ to $G(\Gamma_2,\tau(\varphi_1))$.
            \item $f(\varphi_1,\cdot) \colon G(\Gamma_1,\varphi_1) \to G(\Gamma_2,\tau(\varphi_1))$ is a quasi-isometry which is at most $N$-to-1.
        \end{enumerate}
        If $\Gamma_1$ and $\Gamma_2$ are quasi-isometric, and both $\Gamma_1$ has a computable quasi-isometric image in $\Gamma_2$ and vice-versa, we say that $\Gamma_1$ and $\Gamma_2$ are \define{computably quasi-isometric}.
    \end{defn}

    \begin{remark}
        While the conditions on~\Cref{def:mega-super-uniform-qi} seem very hard to satisfy, they become much simpler if the two blueprints are quasi-isometric and $\Gamma_2$ admits a computable model $\varphi_2$. In this case, the map $\tau$ can be taken constantly equal to $\varphi_2$. For instance, this happens if each blueprint represents a group with decidable word problem and their corresponding Cayley graphs are quasi-isometric.
    \end{remark}

\begin{theorem}\label{thm:medvedev_full}
    Let $\Gamma_1$, $\Gamma_2$ be two finitely presented and strongly connected blueprints, with decidable word problem, which are quasi-isometric, and such that $\Gamma_2$ has a computable quasi-isometric image in $\Gamma_1$. For every $\Gamma_2$-SFT $X$ there exists a $\Gamma_1$-SFT $Y$ such that $\mathfrak{m}(X)=\mathfrak{m}(Y)$.
\end{theorem}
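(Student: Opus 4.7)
The plan is to exploit~\Cref{thm:medvedev_noassumpt} and~\Cref{lem:ida} in opposite directions. Applying~\Cref{thm:medvedev_noassumpt} to $X = X[\Gamma_2,\FF]$ yields some $N \in \NN$ (which I enlarge, if needed, to dominate the constant attached to $f$ in~\Cref{def:mega-super-uniform-qi}) such that $Y := \QI(\FF,N)$ is a $\Gamma_1$-SFT with $X \preceq_{\mathfrak{m}} Y$. It therefore suffices to build a computable map $\psi \colon X \to Y$ witnessing $Y \preceq_{\mathfrak{m}} X$. I would construct $\psi$ by executing the construction of~\Cref{lem:ida} uniformly in an oracle for $(\varphi_2,x) \in X$, using that the maps $\tau$ and $f$ witnessing that $\Gamma_2$ has a computable quasi-isometric image in $\Gamma_1$ are computable, together with the decidable word problem on both $\Gamma_1$ and $\Gamma_2$.

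First, given the oracle, I would compute on demand the companion model $\varphi_1 := \tau(\varphi_2) \in \M(\Gamma_1)$ and the quasi-isometry $F := f(\varphi_2) \colon G(\Gamma_2,\varphi_2) \to G(\Gamma_1,\varphi_1)$, which is at most $N$-to-$1$. Next I would produce an injective lift $\widehat{F} \colon G(\Gamma_2,\varphi_2) \to G(\Gamma_1,\varphi_1) \times \{1,\dots,N\}$ of $F$ computably, as follows: using decidable word problem on $\Gamma_2$, enumerate length-lex minimal representatives $u_0,u_1,u_2,\dots$ of the $\Gamma_2$-equivalence classes of consistent words in $\supp(\varphi_2)$; then for each $k$ compute the $\Gamma_1$-consistent word $v_k := F(u_k)$ and, using decidable word problem on $\Gamma_1$, count the number $n_k$ of indices $j<k$ with $\underline{v_j}_{\Gamma_1} = \underline{v_k}_{\Gamma_1}$, setting $\widehat{F}(\underline{u_k}_{\Gamma_2}) := (\underline{v_k}_{\Gamma_1},\,n_k+1)$. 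Injectivity is automatic, and the indices always lie in $\{1,\dots,N\}$ because $F$ is at most $N$-to-$1$.

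With $\widehat{F}$ in place, I would mirror the recipe of~\Cref{lem:ida} to define $q$. For $v \in (S_1)^*$ with $v \notin \supp(\varphi_1)$, set $q(v) = \varnothing$; otherwise, for each $i \in \{1,\dots,N\}$ search through the $u_k$'s for one with $\widehat{F}(\underline{u_k}_{\Gamma_2}) = (\underline{v}_{\Gamma_1}, i)$, setting $q(v)_i = \ast$ if none exists, and otherwise reading off $M_2(q(v)_i) = \varphi_2(u_k)$, $A(q(v)_i) = x(u_k)$, and, for each $s \in S_2$ with $\init(s) = \varphi_2(u_k)$, the values $\partial P(q(v)_i)(s)$ and $\partial I(q(v)_i)(s)$ from $F(u_k s)$ (expressed relative to $F(u_k)$) and from the index component of $\widehat{F}(\underline{u_k s}_{\Gamma_2})$, exactly as in~\Cref{lem:ida}. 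Conditions \textbf{C0}--\textbf{C5} then hold by the verification already carried out in that lemma, so $\psi(\varphi_2,x) := (\varphi_1,q) \in Y$.

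The main obstacle I anticipate is ensuring that the preimage search above terminates effectively. Since $F$ is a quasi-isometric embedding with constants $\lambda,C$, any two preimages of a common $\Gamma_1$-class lie within $\Gamma_2$-distance $\lambda C$; once one preimage has been located, only finitely many further $u_k$'s need inspection, and the search halts. For this to be effective one needs explicit values of $\lambda$ and $C$, which is natural to regard as part of the data of~\Cref{def:mega-super-uniform-qi}, or to extract algorithmically from it. Granting this, $\psi$ is computable, so $Y \preceq_{\mathfrak{m}} X$; combined with $X \preceq_{\mathfrak{m}} Y$ from~\Cref{thm:medvedev_noassumpt}, this yields $\mathfrak{m}(X) = \mathfrak{m}(Y)$.
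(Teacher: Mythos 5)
Your proposal is correct and follows essentially the same route as the paper: apply Theorem~\ref{thm:medvedev_noassumpt} for $X \preceq_{\mathfrak{m}} Y$, then make the construction of Lemma~\ref{lem:ida} computable using the data $\tau,f$ from Definition~\ref{def:mega-super-uniform-qi} together with decidable word problems, assigning indices lexicographically to obtain the injective lift $\widehat{F}$. Your remark that explicit quasi-isometry constants are needed to guarantee the preimage search halts is a valid point the paper elides, but it does not alter the argument since such constants come with the quasi-isometry and can be hard-coded into the reduction.
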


\begin{proof}
    Write $X=X[\Gamma_2,\FF]$. Take $N$ larger than the constant from~\Cref{def:mega-super-uniform-qi} and consider $Y = \QI(\FF,N)$ as in~\Cref{thm:QI}. We already know by~\Cref{thm:medvedev_noassumpt} that $X \preceq_{\mathfrak{m}} Y$. Conversely, given $(\varphi_2,x)\in X$, we use $\tau$ and $f$ to produce a model $\varphi_1 = \tau(\varphi_2) \in \mathcal{M}(\Gamma_1)$ and map $g=f(\varphi_2)$ which induces a quasi-isometry from $G(\Gamma_2,\varphi_2) \to G(\Gamma_1,\varphi_1)$ that is at most $N$-to-1. The fact that both blueprints have decidable word problem makes the transformation of $g$ to an injective map $\widehat{g}\colon G(\Gamma_2,\varphi_2) \to G(\Gamma_1,\varphi_1) \times \{1,\dots, N\}$ from~\Cref{lem:ida} a computable process (we solve the word problem and assign indices lexicographically). Following the proof of~\Cref{lem:ida}, we obtain a point in $\QI(\FF,N)$ which is computable from a description of $(\varphi_2,x)\in X$. This shows that $Y \preceq_{\mathfrak{m}} X$ and thus we get that $\mathfrak{m}(X)=\mathfrak{m}(Y)$.
\end{proof}

For a blueprint $\Gamma$, denote by $\mathfrak{M}_{\texttt{SFT}}(\Gamma)$ the set of Medvedev degrees of all $\Gamma$-SFTs. The following corollary is a direct consequence of~\Cref{thm:medvedev_full}.

\begin{corollary}
    Let $\Gamma_1$, $\Gamma_2$ be two finitely presented strongly connected blueprints with decidable word problem that are computably quasi-isometric, then $\mathfrak{M}_{\texttt{SFT}}(\Gamma_1)=\mathfrak{M}_{\texttt{SFT}}(\Gamma_2)$.
\end{corollary}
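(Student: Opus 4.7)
The plan is to deduce this corollary directly from \Cref{thm:medvedev_full} by applying it symmetrically in both directions. Since $\Gamma_1$ and $\Gamma_2$ are computably quasi-isometric, by \Cref{def:mega-super-uniform-qi} the blueprint $\Gamma_2$ has a computable quasi-isometric image in $\Gamma_1$ and simultaneously $\Gamma_1$ has a computable quasi-isometric image in $\Gamma_2$. Combined with the standing hypotheses (both finitely presented, strongly connected, with decidable word problem, and quasi-isometric), this means the hypotheses of \Cref{thm:medvedev_full} are met with the roles of $\Gamma_1$ and $\Gamma_2$ interchangeable.

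First I would take any $\Gamma_2$-SFT $X$ and invoke \Cref{thm:medvedev_full} to produce a $\Gamma_1$-SFT $Y$ with $\mathfrak{m}(X)=\mathfrak{m}(Y)$; this shows $\mathfrak{M}_{\texttt{SFT}}(\Gamma_2)\subseteq \mathfrak{M}_{\texttt{SFT}}(\Gamma_1)$. Then, swapping the roles of the two blueprints (which is legitimate because computable quasi-isometry is symmetric by definition), I would apply \Cref{thm:medvedev_full} again to obtain, for every $\Gamma_1$-SFT $X'$, a $\Gamma_2$-SFT $Y'$ with $\mathfrak{m}(X')=\mathfrak{m}(Y')$, giving the reverse inclusion $\mathfrak{M}_{\texttt{SFT}}(\Gamma_1)\subseteq \mathfrak{M}_{\texttt{SFT}}(\Gamma_2)$. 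The two inclusions together yield the claimed equality of sets of Medvedev degrees.

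Since all the non-trivial work has been carried out in \Cref{thm:medvedev_full} (which itself relies on the black-box \Cref{thm:QI} together with the computability of the quasi-isometry data and the decidability of the word problem to make the encoding $\widehat{g}$ of~\Cref{lem:ida} algorithmic), there is no genuine obstacle here: the only thing to verify is that the symmetry assumption in \Cref{def:mega-super-uniform-qi} is precisely what allows us to apply \Cref{thm:medvedev_full} in both directions without any additional work.
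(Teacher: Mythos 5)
Your proposal is correct and matches the paper's intent exactly: the paper states this corollary as a ``direct consequence'' of Theorem~\ref{thm:medvedev_full}, and your symmetric two-way application of that theorem, justified by the symmetry built into Definition~\ref{def:mega-super-uniform-qi}, is precisely the intended argument.
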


\subsection{A hyperbolic example}

By applying the previous theorems to finitely generated co-compact Fuchsian groups, we obtain a new proof of the undecidability of their domino problem (originally proved for surface groups in~\cite{aubrun2019domino}, and later for all hyperbolic groups in~\cite{bartholdi2023domino}) and of the existence of strongly aperiodic SFTs (originally proved for surface groups in~\cite{cohen2017strongly}, and later for all hyperbolic groups~\cite{cohen2022strongly}). The following result relies on known results for tilings of the hyperbolic plane by regular pentagons, namely, the undecidability of its domino problem~\cite{kari2008undecidability}, and the existence of strongly aperiodic hyperbolic Wang tilings~\cite{kari2016piecewise}. Notice that such a tiling can be codyfied using the hyperbolic tiling blueprint (Example~\ref{ex:hyperbolic_tilings}) whose model graphs represent the dual graph of the tiling, and is such that the strongly aperiodic tiling by Wang tiles becomes a strongly aperiodic nearest neighbor SFT on $\mathcal{H}$.

\begin{theorem}
    Finitely generated co-compact Fuchsian groups have undecidable domino problem and admit strongly aperiodic SFTs.
\end{theorem}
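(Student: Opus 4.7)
The plan is to realize both the hyperbolic tiling blueprint $\mathcal{H}$ of Example~\ref{ex:hyperbolic_tilings} and the Fuchsian group $G$ as finitely presented strongly connected blueprints whose model graphs are quasi-isometric to the hyperbolic plane $\mathbb{H}^2$, and then to transfer the two properties from $\mathcal{H}$ to $G$ via Theorems~\ref{thm:domino} and~\ref{thm:SA}.

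First I would establish the properties of $\mathcal{H}$ that the preamble to the statement already sketches. The blueprint $\mathcal{H}$ is finitely presented by construction, and strong connectedness of its model graphs follows from the fact that the $t_i^\pm$ generators together with the $s_i^j$ generators allow one to reach any vertex from any other (the $t_i^\pm$ produce a bi-infinite horocyclic path at each level, and the $s_i^j/p_i$ pairs connect adjacent levels). The model graphs $G(\mathcal{H},\varphi)$ are the dual graphs of the binary tilings of $\mathbb{H}^2$, which by a standard embedding are quasi-isometric to $\mathbb{H}^2$. The quoted results of Kari on hyperbolic Wang tilings (undecidability~\cite{kari2008undecidability} and strong aperiodicity~\cite{kari2016piecewise}) can then be encoded as nearest-neighbor $\mathcal{H}$-SFTs whose forbidden patterns are the ones violating the Wang matching rules; this shows that the $\mathcal{H}$-domino problem is undecidable and that $\mathcal{H}$ admits a strongly aperiodic SFT.

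Next I would realize $G$ as a blueprint. Given a finite presentation $\langle S \mid R\rangle$ of $G$ with $S$ symmetric, by Example~\ref{ex:monoide_y_grupo} we take $\Gamma_G$ to be the single-state blueprint with generating set $S$ and relations $R$ together with the inverse relations. Then $\Gamma_G$ is finitely presented, its unique model has model graph equal to the Cayley graph $\mathrm{Cay}(G,S)$, and in particular $\Gamma_G$ is strongly connected. Under this correspondence, $\Gamma_G$-SFTs and the $\Gamma_G$-domino problem coincide (up to renaming) with the classical notions of $G$-SFT and $G$-domino problem, and strong aperiodicity in the blueprint sense coincides with freeness of the shift action of $G$. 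Since $G$ is finitely generated and acts properly discontinuously and cocompactly by isometries on $\mathbb{H}^2$, the Milnor--\v{S}varc lemma gives that $\mathrm{Cay}(G,S)$ is quasi-isometric to $\mathbb{H}^2$. Combined with the previous paragraph, every model graph of $\Gamma_G$ is quasi-isometric to every model graph of $\mathcal{H}$, so $\Gamma_G$ and $\mathcal{H}$ are quasi-isometric blueprints in the sense of Section~\ref{sec:quasi-isometries}.

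Applying Theorem~\ref{thm:domino} transfers undecidability of the $\mathcal{H}$-domino problem to undecidability of the $\Gamma_G$-domino problem, hence of the $G$-domino problem, and applying Theorem~\ref{thm:SA} transfers a strongly aperiodic $\mathcal{H}$-SFT to a strongly aperiodic $\Gamma_G$-SFT, hence a strongly aperiodic $G$-SFT. The conceptual obstacle is not the QI-transfer itself, which is given as a black box by Theorem~\ref{thm:QI}, but the bookkeeping that matches Kari's hyperbolic Wang tilings and his aperiodic set with configurations on $\mathcal{H}$: one must check that the combinatorial adjacency of Wang tiles corresponds exactly to the local rules encodable as nearest-neighbor patterns over the generators of $\mathcal{H}$, and that an equivariance-free tiling of $\mathbb{H}^2$ produces a configuration whose stabilizer under the partial $S^*$-action is trivial. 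Once these translations are made, the theorem follows directly from the two QI-invariance results.
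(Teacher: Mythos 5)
Your proposal follows essentially the same route as the paper: encode Kari's hyperbolic Wang tile results as an SFT on the hyperbolic tiling blueprint $\mathcal{H}$, observe that every model graph of $\mathcal{H}$ is quasi-isometric to $\mathbb{H}^2$, invoke \v{S}varc--Milnor to relate the Fuchsian group's Cayley graph (viewed as a single-state blueprint via Example~\ref{ex:monoide_y_grupo}) to $\mathbb{H}^2$, and transfer via Theorems~\ref{thm:domino} and~\ref{thm:SA}. You are somewhat more explicit than the paper about strong connectedness of $\mathcal{H}$ and about the dictionary between $G$-SFTs and $\Gamma_G$-SFTs, but the argument is the same.
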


\begin{proof}
    Consider the hyperbolic tiling blueprint $\mathcal{H}$ from Example~\ref{ex:hyperbolic_tilings}. We know from~\cite{kari2008undecidability} that the $\mathcal{H}$-domino problem is undecidable (see also~\cite[Theorem 1]{aubrun2019domino}), and from~\cite{kari2016piecewise} that $\mathcal{H}$ admits strongly aperiodic SFTs. Furthermore, every model graph of $\mathcal{H}$ is quasi-isometric to the hyperbolic plane $\mathbb{H}^2$. Next, by the \v{S}varc–Milnor Lemma we know that every finitely generated co-compact Fuchsian group is quasi-isometric to $\mathbb{H}^2$, and therefore quasi-isometric to every model graph of $\mathcal{H}$. By Theorems~\ref{thm:domino} and~\ref{thm:SA}, these groups have undecidable domino problem and admit strongly aperiodic SFTs.
\end{proof}




%
\section{The domino problem on tilings of the Euclidean space}
\label{sec:geometric}

The goal of this section is to apply our main result on subshifts defined on blueprints to show the undecidability of a variant of the domino problem for geometric tilings of the Euclidean space. The fundamental observation is that the space of geometric tilings given by a finite number of shapes can be modeled by a finitely presented blueprint under the assumption of finite local complexity.

\subsection{Geometric Tilings}

Let us give a brief introduction of geometric tilings. For further information we refer the reader to~\cite{Baake_Grimm_2013,Baake_Grimm_2017,Lagarias1999}. For the remainder of this section, the ambient space is assumed to be $\R^d$ equipped with the Euclidean norm for some fixed $d \geq 1$. We denote by $B_r(x)$ the closed ball of radius $r \geq 0$ centered in $x\in \R^d$, and write $B_r$ as a shorthand for $B_r(0)$.

A \define{tile} $t$ is a subset of $\R^d$ that is homeomorphic to the closed unit ball of $\R^d$. A \define{partial tiling} is a collection of tiles $\{t_i\}_{i\in I}$ whose interiors are pairwise disjoint, and we say it is \define{finite} if the index set $I$ is finite. The \define{support} of a partial tiling $P = \{t_i\}_{i\in I}$ is the union $\bigcup_{i \in I}t_i$. A partial tiling whose support is $\R^d$ is called a \define{tiling}.

Given a partial tiling $P$ and a subset $F\subseteq\R^d$, we denote by $P \sqcap F$ the set of all tiles from $P$ that intersect $F$, that is, \[P\sqcap F = \{ t\in P : t\cap F\neq\varnothing\}.\] We say two partial tilings $P_1$ and $P_2$ \define{match} in $F$ whenever $P_1\sqcap F = P_2 \sqcap F$. A partial tiling $P$ is called \define{locally finite} if for every compact $K\subset \R^d$ we have that $P \sqcap K$ is finite. In this case we call $P \sqcap K$ a \define{cluster}. Furthermore, if $K$ is convex, we call $P \sqcap K$ a \define{patch}.

Given $x\in \R^d$ and a partial tiling $P=\{t_i\}_{i\in I}$, its \define{translation} $P+x$ is the partial tiling given by \[P+x = \{t_i + x\}_{i\in I}.\]

A set of \define{punctured tiles} is a collection of tiles $\PP = \{p_1,\dots,p_n\}$ with the property that $0 \in \operatorname{int}(p_i)$ for $i \in \{1,\dots,n\}$ and such that distinct tiles do not coincide up to translation, that is, if $p_i = v + p_j$ for some $v \in \R^d$ and $i,j \in I$, then $i =j$ and $v = 0$. 

Let $\PP$ be a finite set of punctured tiles. A tiling $T = \{p_i\}_{i \in I}$ is \define{generated} by $\PP$ if for every $i \in I$ there exists a position $\operatorname{pos}(p_i) \in \R^d$ and $p \in \PP$ such that $p_i = \operatorname{pos}(p_i) + p$. Let us remark that as distinct tiles do not match up to translation, these positions and the corresponding punctured tile are uniquely defined for each $i \in I$. A tiling $T = \{p_i\}_{i \in I}$ generated by a set of punctured tiles $\PP$ is called \define{punctured} if there exists $i \in I$ for which $p_i \in \PP$, or equivalently, if $\operatorname{pos}(p_i)=0$ for some $i \in I$.

The space of all tilings of $\R^d$ generated by a set of punctured tiles $\PP$ is denoted by $\Omega(\PP)$ and its subspace of punctured tilings is denoted by $\Omega_{\circ}(\PP)$. We say that $\PP$ (and its tiling space $\Omega(\mathcal{P})$) has \define{finite local complexity} (FLC) if for every $r>0$ the set $\{T\sqcap B_r : T \in \Omega_\circ(\mathcal{P})\}$ is finite. Under the assumption of FLC, the space of punctured tilings $\Omega_\circ(\PP)$ is a compact metric space with the metric given by \[ d(T_1,T_2) = 2^{-\sup\{r \geq 0\ :\ T_1\sqcap B_r = T_2 \sqcap B_r\}} \mbox{ for all } T_1,T_2  \in \Omega_{\circ}(\mathcal{P}). \]

\begin{example}\label{ex:monotile}
The set of hat punctured tiles $\mathcal{P}_{\mathrm{hat}}$ is the collection given by the twelve tiles in $\RR^2$ that can be obtained by reflecting and rotating by multiples of $\pi/6$ the hat tile shown in~\Cref{fig:monotile}.

\begin{figure}[ht!]
    \centering
    \begin{tikzpicture}
        \pic[tile1,rotate=60]{tile};
    \end{tikzpicture}
    \caption{The hat tile}
    \label{fig:monotile}
\end{figure}
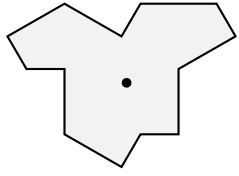

This is a famous example introduced by Smith, Myers, Kaplan and Goodman-Strauss~\cite{SmithMyersKaplanGS2024_monotile}. It has FLC and the remarkable property that its space of tilings is nonempty and contains no elements with translational symmetries. A patch of the monotile is shown in~\Cref{fig:monotiling}.

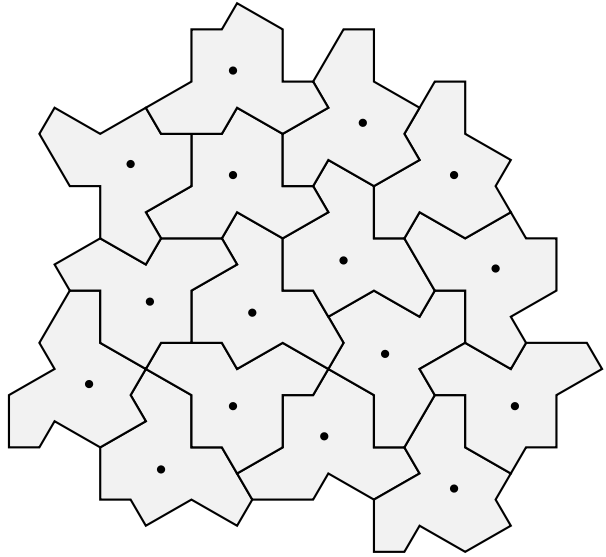
\begin{figure}[ht!]
    \centering
    \begin{tikzpicture}[scale=0.8]
\foreach\i in {0,...,11} \foreach[count=\j]\jj in {A,...,H} 
  \coordinate (\jj\i) at ({\j+0.5*mod(\i,2)},{\i*cos(30)});
\pic[tile1,scale=0.8]                      at (B1) {tile};
\pic[tile1,rotate=240,scale=0.8]           at (D2) {tile};
\pic[tile1,rotate=120,scale=0.8]           at (G2) {tile};
\pic[tile1,rotate=300,xscale=-1,scale=0.8] at (B3) {tile};
\pic[tile1,rotate=300,scale=0.8]           at (E3) {tile};
\pic[tile1,rotate=60,scale=0.8]            at (H3) {tile};
\pic[tile1,rotate=120,scale=0.8]           at (A4) {tile};
\pic[tile1,scale=0.8]                      at (D4) {tile};
\pic[tile1,rotate=60,scale=0.8]            at (B5) {tile};
\pic[tile1,scale=0.8]                      at (E5) {tile};
\pic[tile1,rotate=180,scale=0.8]           at (G6) {tile};
\pic[tile1,rotate=240,scale=0.8]           at (B7) {tile};
\pic[tile1,rotate=180,scale=0.8]           at (A8) {tile};
\pic[tile1,rotate=120,scale=0.8]           at (G8) {tile};
\pic[tile1,rotate=120,scale=0.8]           at (E9) {tile};
\pic[tile1,rotate=240,scale=0.8]           at (B9) {tile};
\end{tikzpicture}
    \caption{A patch generated by the punctured tiles of the monotile.}
    \label{fig:monotiling}
\end{figure}

\end{example}

\subsection{A blueprint for punctured geometric tilings}\label{subsec:blueprint_geometric}

Let $\PP$ be a finite set of punctured tiles with FLC. Our objective is to create a blueprint on which the space of models represents the space $\Omega_{\circ}(\PP)$ of punctured tilings generated by $\PP$.

With that end in mind, define \[ \rho = \inf\{ r >0 : \mbox{ for all } p \in \PP, p \subset B_r\}. \]
In other words, $\rho$ is the radius of the smallest closed ball centered at the origin that contains all punctured tiles. 

Let us fix a positive integer $K$. The set of states $M$ for our blueprint is given by the set of all patches of radius $K\rho$.
$$M = \{T\sqcap B_{K\rho} : T\in \Omega_\circ(\PP)\}.$$
It follows that $M$ is finite by the assumption that $\PP$ has FLC. Next, we define the set of generators $S$ as follows: \[ S = \{ (m,v) \in M \times \R^d: 0 < \|{v}\| \leq 3\rho \ \mbox{ and } \ v = \pos(t) \mbox{ for some } t \in M\}. \]

We note that the bound $3\rho$ might appear unnatural at this time, as clearly $2\rho$ suffices to move to any adjacent tile, however, the choice of $3\rho$ will allow us to construct a word that simulates moving in the $\rho$-neighborhood of a segment. This will be justified by~\Cref{lem:sausage} later on.

Given $s = (m,v) \in S$, we declare its initial vertex as $\init(s) = m$, and define its \define{valuation} as $\val(s)= v$. In addition, let \[ \ter(s) = \{m' \in M \colon  m'+\val(s) \mbox{ matches with } m \mbox{ in } B_{K\rho}(0) \cap B_{K\rho}(\val(s))\}.    \]
In other words, our generators connect two patches if they overlap correctly. We extend the valuation to words in $S^*$ by setting $\val(\varepsilon) = 0$ and for a nonempty word $w = s_1\dots s_n \in S^*$, we set $\val(w) = \sum_{i=1}^n \val(s_i)$.
Finally, fix a positive integer $L$. We define the set of relations as \[ R = \{ (w,w') \in S^*\times S^* \colon |w|+|w'| \leq L, \init(w) = \init(w') \mbox{ and}  \val(w)=\val(w') \}.    \]

Notice again that as there are finitely many pairs of words $w,w' \in S^*$ with $|w|+|w'|\leq L$, the set of relations is finite.

\begin{defn}\label{def:patchblueprint}
    Fix positive integers $K,L$. The \define{patch blueprint} of a set of punctured tiles $\PP$ with FLC is the finitely presented blueprint $\Gamma(\PP,K,L) = (M,S,\init,\ter,R)$ where $M, S, \init,\ter$ and $R$, are given as above.
\end{defn}

\subsection{Correspondence between models and punctured tilings}\label{appendix}
 Fix a set of punctured tiles $\PP$ such that $\Omega_\circ(\PP)$ has finite local complexity and let $\rho$ as in the previous section. The goal of this section is to prove that for large enough constants $K$ and $L$, the space of models $\mathcal{M}(\Gamma(\PP,K,L))$ of the patch blueprint completely captures the structure of the space of punctured tilings $\Omega_{\circ}(\PP)$.

We begin by proving three technical lemmas that will be needed in the proof and also used in the next section. For brevity, we shall write $\Gamma = \Gamma(\PP,K,L)$ for the patch blueprint.

\begin{lemma}\label{lem:valuation_is_well_defined}
    Let $\varphi \in \mathcal{M}(\Gamma)$ and let $w,w'\in \supp(\varphi)$ be $\Gamma$-equivalent words. Then $\val(w)=\val(w')$. 
\end{lemma}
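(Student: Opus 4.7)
The plan is to observe that $\val$ is essentially a monoid homomorphism from $S^*$ to the additive group $(\R^d, +)$, and that the defining relations in $R$ respect this valuation by construction, so the whole equivalence relation generated by $R$-similarity preserves $\val$.

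More concretely, I would first record the additivity of $\val$: by the definition $\val(\varepsilon)=0$ and $\val(s_1\dots s_n)=\sum_i \val(s_i)$, a trivial induction on the length gives
\[
\val(xy) \;=\; \val(x)+\val(y) \qquad \text{for all } x,y\in S^*.
\]
Next I would verify the statement at the level of a single relation: if $(u,v)\in R$, then by the very definition of $R$ (the set of relations in $\Gamma(\PP,K,L)$) we have $\val(u)=\val(v)$. Combined with the additivity above, this yields that whenever $w = xuy$ and $w' = xvy$ with $(u,v)\in R$, then $\val(w) = \val(x)+\val(u)+\val(y) = \val(x)+\val(v)+\val(y) = \val(w')$. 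In other words, $\Gamma$-similarity preserves the valuation.

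Finally, since $\Gamma$-equivalence is the equivalence relation generated by $\Gamma$-similarity, two $\Gamma$-equivalent consistent words $w, w'$ are linked by a finite chain $w = w_0 \sim w_1 \sim \dots \sim w_n = w'$ of $\Gamma$-similar consistent words. A straightforward induction on $n$ together with the previous paragraph shows $\val(w_0) = \val(w_n)$, giving the claim. Note that the hypothesis $w, w' \in \supp(\varphi)$ is not actually exploited beyond guaranteeing that $w$ and $w'$ are $\Gamma$-consistent, which is needed only to make the notion of $\Gamma$-equivalence well defined for them; the argument itself only uses the homomorphism property of $\val$ and the fact that the relations in $R$ equate valuations. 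Consequently there is no real obstacle: the lemma is essentially immediate from the definitions of $\val$ and of $R$.
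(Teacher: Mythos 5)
Your proof is correct and follows essentially the same argument as the paper: observe that $\val$ is additive, that each relation $(u,v)\in R$ satisfies $\val(u)=\val(v)$ by definition, conclude that $\Gamma$-similarity preserves $\val$, and finish by induction along a chain of similarities. The extra remark that the hypothesis $w,w'\in\supp(\varphi)$ is not really used is a fair observation but does not change the substance.
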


\begin{proof}
    Suppose first that $w, w'$ are $\Gamma$-similar, that is, such that there exist $x,y,u,v\in S^*$ such that $w = xuy$, $w' = xvy$ and $(u,v)\in R$. By definition of $R$ we get that $\val(u)=\val(v)$. We deduce that
    $$\val(w) = \val(x) + \val(u) + \val(y) = \val(x) + \val(v) + \val(y) = \val(w').$$
    By induction, it follows that for any pair of $\Gamma$-equivalent words $w,w'$ we have $\val(w) = \val(w')$.
\end{proof}

\begin{lemma}[Interpolation]
\label{lem:sausage}
    Let $T$ be a partial tiling by translations of punctured tiles in $\PP$ whose support contains a convex set $C\subset \RR^d$. For any distinct $x,y \in C$ if we take $n = \lceil  \rho^{-1}\norm{y-x}\rceil$, there exists a sequence of tiles $t_0,\dots, t_{n}  \in T$ such that:
    \begin{enumerate}
        \item For every $k \in \{0,\dots,n\}$ we have $\norm{\pos(t_k)- (x +\frac{k}{n}(y-x))}\leq \rho$,
        \item For every $k \in \{1,\dots,n\}$ we have $\norm{\pos(t_k)-\pos(t_{k-1})}\leq 3\rho$.
        \end{enumerate}
\end{lemma}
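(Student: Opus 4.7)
The plan is to construct the tiles $t_0, \dots, t_n$ directly by sampling the segment from $x$ to $y$ at $n+1$ equally spaced points and picking, at each such point, a tile of $T$ that contains it.

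More precisely, for each $k \in \{0,\dots,n\}$ set $z_k = x + \frac{k}{n}(y-x)$. Since $x,y\in C$ and $C$ is convex, every $z_k$ lies in $C$ and hence in the support of $T$. Choose $t_k \in T$ to be any tile containing $z_k$ (if several do, pick one arbitrarily; this only happens when $z_k$ lies on a boundary). By the definition of $\rho$, every punctured tile $p\in\PP$ satisfies $p\subset B_\rho$, so after translation the tile $t_k = \pos(t_k) + p$ for some $p\in\PP$ satisfies $t_k \subset B_\rho(\pos(t_k))$. Since $z_k \in t_k$, this yields $\norm{\pos(t_k) - z_k}\leq \rho$, which is condition (1).

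For condition (2), note that by the choice $n = \lceil \rho^{-1}\norm{y-x}\rceil$ we have $\norm{z_k - z_{k-1}} = \norm{y-x}/n \leq \rho$ for every $k\in\{1,\dots,n\}$. Combining with condition (1) and the triangle inequality,
\[\norm{\pos(t_k) - \pos(t_{k-1})} \leq \norm{\pos(t_k) - z_k} + \norm{z_k - z_{k-1}} + \norm{z_{k-1} - \pos(t_{k-1})} \leq 3\rho,\]
which is the required bound. There is no real obstacle here: the statement is purely geometric and follows from the definition of $\rho$, the convexity of $C$, and the choice of $n$. The only mild subtlety is the potential non-uniqueness of $t_k$ when $z_k$ sits on the boundary between tiles, which is harmless since any choice works.
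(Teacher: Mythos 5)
Your proof is correct and follows essentially the same approach as the paper's: sample the segment at $n+1$ equally spaced points, pick a tile containing each sample, and close the estimate with the triangle inequality. The paper's proof is verbatim the same argument with $z_k$ called $x_k$.
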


\begin{proof}
    For each $k \in \{0,\dots,n\}$ let $x_k = x + \frac{k}{n}(y-x)$. As $C$ is convex, we have that $x_k \in C$ and as $C$ is contained in $\supp(T)$ we deduce that for each such $k$ there exists a tile $t_k$ such that $\norm{ \pos(t_k) - x_k } \leq \rho$. 

    We claim the collection of tiles $t_0,\dots,t_k$ satisfies the above requirements. The fist one is obvious from our construction. For the second one, let $k \in \{1,\dots,n\}$ and note that \begin{align*}
        \norm{\pos(t_k)-\pos(t_{k-1})} & \leq   \norm{\pos(t_k)-x_k} + \norm{x_k - x_{k-1}} +  \norm{\pos(t_{k-1})-x_{k-1}}\\
         & \leq 2\rho  +\frac{1}{n}\norm{y-x}\\
         & \leq 2\rho  +\lceil  \rho^{-1}\norm{y-x}\rceil^{-1}\norm{y-x}\\
         & \leq 3\rho.\qedhere
    \end{align*}
\end{proof}

\begin{lemma}[Visibility]\label{lem:approach}
    Let $T$ be a partial tiling by translations of punctured tiles in $\PP$ whose support contains a convex set $C\subset \RR^d$. Let $t,t'\in T$ such that $B_{2\rho}(\pos(t))\cup B_{2\rho}(\pos(t')) \subset C$. For every $x \in B_{K\rho}(\pos(t))\cap B_{K\rho}(\pos(t'))$, there exists a sequence of tiles $t = t_0,t_1,\dots t_n = t'$ in $T$ such that 
    \begin{enumerate}
        \item $n \leq  2+\lceil \rho^{-1}\norm{\pos(t)-\pos(t')}\rceil$,
        \item For every $k \in \{1,\dots,n\}$ we have $\pos(t_k)\in C$ and $\norm{\pos(t_{k})-\pos(t_{k-1})}\leq 3\rho$,
        \item $x\in \bigcap_{k=0}^n B_{K\rho}(\pos(t_{k}))$.
    \end{enumerate}
\end{lemma}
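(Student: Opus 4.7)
The plan is to reduce the lemma to an application of the Interpolation Lemma (Lemma~\ref{lem:sausage}). A naive application to the straight segment $[\pos(t),\pos(t')]$ produces intermediate tiles whose positions sit within $\rho$ of a segment inside $B_{K\rho}(x)$ (by convexity of balls), hence at distance up to $(K+1)\rho$ from $x$. This misses condition~(3) by exactly the $\rho$-slack of the interpolation lemma. I will fix this by applying Lemma~\ref{lem:sausage} not to $[\pos(t),\pos(t')]$ itself, but to a slightly contracted segment pulled toward $x$ until it sits inside $B_{(K-1)\rho}(x)$, so that the $\rho$-fuzz of the interpolation lemma lands inside $B_{K\rho}(x)$.

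\medskip

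Concretely, I set $p=\pos(t)$, $q=\pos(t')$ and define
$$\lambda = \min\!\left(1,\ \frac{(K-1)\rho}{\|p-x\|},\ \frac{(K-1)\rho}{\|q-x\|}\right),$$
with the convention $(K-1)\rho/0 = +\infty$ covering the degenerate case $p = x$ or $q = x$. Setting $p^* = x+\lambda(p-x)$ and $q^* = x+\lambda(q-x)$, a short calculation will establish three properties: (i) both $p^*$ and $q^*$ lie in $B_{(K-1)\rho}(x)$ by the choice of $\lambda$; (ii) $\|p-p^*\| = (1-\lambda)\|p-x\|\leq \rho$ and analogously for $q-q^*$, using $\|p-x\|,\|q-x\|\leq K\rho$ to bound the case $\lambda<1$; (iii) $\|p^*-q^*\| = \lambda\|p-q\| \leq \|p-q\|$, because a contraction toward a single point scales distances uniformly. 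Property (i) is the key geometric gain.

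\medskip

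Next I verify that $[p^*,q^*]$ lies in $C$ together with its $\rho$-neighborhood: indeed $p^* \in B_\rho(p)\subset B_{2\rho}(p)\subset C$, analogously $q^*\in C$, so by convexity $[p^*,q^*]\subset C$; and the Minkowski sum $[p^*,q^*]+B_\rho(0)$ is contained in the convex hull of $B_{2\rho}(p)\cup B_{2\rho}(q)$, hence in $C$. Applying Lemma~\ref{lem:sausage} to $C$ with endpoints $p^*,q^*$ (assuming $p^*\neq q^*$, else take $n\leq 1$ directly) then yields tiles $\tau_0,\dots,\tau_m\in T$ with $m=\lceil\rho^{-1}\|p^*-q^*\|\rceil\leq\lceil\rho^{-1}\|p-q\|\rceil$, positions within $\rho$ of $[p^*,q^*]$, and consecutive positions within $3\rho$.

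\medskip

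The final sequence is $t_0 = t$, $t_k = \tau_{k-1}$ for $k=1,\dots,m+1$, and $t_{m+2} = t'$, of length $n = m+2 \leq 2+\lceil\rho^{-1}\|p-q\|\rceil$, giving condition~(1). Condition~(2) holds because each $\pos(\tau_{k-1})$ lies in the $\rho$-neighborhood of $[p^*,q^*]\subset C$, and the boundary transitions satisfy $\|p-\pos(\tau_0)\|\leq \|p-p^*\|+\rho\leq 2\rho$, and analogously at $t'$. Condition~(3) holds at the endpoints by hypothesis, and at interior indices because $\pos(\tau_{k-1})$ lies within $\rho$ of a segment inside $B_{(K-1)\rho}(x)$, so $\|\pos(\tau_{k-1})-x\|\leq K\rho$. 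The degenerate case $p=q$ is handled separately by taking $n\leq 1$. The hardest part of the argument, and the entire motivation for introducing the contraction, is controlling the $\rho$-error of Lemma~\ref{lem:sausage} against the tight $K\rho$ bound; everything else is straightforward bookkeeping.
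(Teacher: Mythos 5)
Your proof is correct and follows the same overall strategy as the paper: contract the segment $[\pos(t),\pos(t')]$ toward $x$ so that the $\rho$-slack of the Interpolation Lemma lands inside $B_{K\rho}(x)$, apply that lemma to the contracted segment, and prepend/append the original endpoints. The difference is the contraction recipe. The paper moves each endpoint a fixed distance $\rho$ along the ray toward $x$ (obtaining points $y$ and $z$, then applying Lemma~\ref{lem:sausage} inside the set $C'=\{x'\in C: B_\rho(x')\subset C\}\cap B_{(K-1)\rho}(x)$), whereas you rescale the whole segment about $x$ by a single factor $\lambda$ chosen to bring both endpoints into $B_{(K-1)\rho}(x)$. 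Your choice buys two things. The length comparison $\|p^*-q^*\|=\lambda\|\pos(t)-\pos(t')\|\leq\|\pos(t)-\pos(t')\|$ needed for condition (1) is immediate, whereas the paper's corresponding step $\|z-y\|\leq\|\pos(t)-\pos(t')\|$ requires a calculation. More substantively, the additive shift overshoots $x$ whenever $\|x-\pos(t)\|<\rho$ or $\|x-\pos(t')\|<\rho$, and when $\|x-\pos(t)\|+\|x-\pos(t')\|<\rho$ one can check that the paper's inequality $\|z-y\|\leq\|\pos(t)-\pos(t')\|$ actually fails (the lemma still holds there --- just take $n=1$ directly --- but the paper's construction does not literally yield bound (1); this regime is realizable for thin tiles whose circumradius $\rho$ exceeds twice the minimal tile separation). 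Your multiplicative rescaling leaves the segment alone ($\lambda=1$) whenever both endpoints already lie in $B_{(K-1)\rho}(x)$, which covers that degenerate configuration, and so sidesteps the issue entirely.
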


\begin{proof}
    Let $t,t'$ and $x$ as in the statement. If $\norm{\pos(t)-\pos(t')}\leq 3\rho$ we take $t_0 =t$, $t_1 = t'$ and there is nothing to prove. Otherwise, we define the points 
    $$y = \pos(t) + \rho\frac{x-\pos(t)}{\norm{x-\pos(t)}}, \ \textnormal{and} \ z = \pos(t') + \rho\frac{x-\pos(t')}{\norm{x-\pos(t')}}.$$
    
    Let $C' =  \{x' \in C : B_{\rho}(x')\subset C\} \cap B_{(K-1)\rho}(x)$ and notice that $y,z \in C'$. Applying~\Cref{lem:sausage} to the partial tiling $T$, the convex set $C'$, and to $y,z\in C'$, we obtain that if we let $m = \lceil  \rho^{-1}\norm{z-y}\rceil$, there exists a sequence of tiles $t_1,\dots,t_{m+1}$ which belong to $T$ and satisfy that
    \begin{enumerate}
        \item For every $k \in \{1,\dots,m+1\}$ we have $\norm{\pos(t_k)- (y +\frac{k-1}{m}(z-y))}\leq \rho$,
        \item For each $k \in \{2,\dots,m+1\}$, $\norm{\pos(t_k)-\pos(t_{k-1})}\leq 3\rho$.
    \end{enumerate}
    
    Fix $n = m+2$, $t_0 =t$ and $t_n = t'$. We claim that the collection $t_0,t_1,\dots,t_n$ satisfies the three conditions. First, an elementary computation shows that $\norm{z-y}\leq \norm{\pos(t)-\pos(t')}$ and thus \[n = 2 + \lceil  \rho^{-1}\norm{z-y}\rceil \leq 2 +\lceil\rho^{-1}\norm{\pos(t)-\pos(t')}\rceil.\]

    For the second condition, we already have $\norm{\pos(t_k)-\pos(t_{k-1})}\leq 3\rho$ for $k \in \{2,\dots,n-1\}$. For the border cases the bound follows from the following computation:\[ \norm{\pos(t_1)-\pos(t_0)} \leq \norm{\pos(t_1)-y}+\norm{y-\pos(t)} \leq 2\rho.  \]
    \[ \norm{\pos(t_{n})-\pos(t_{n-1})} \leq \norm{\pos(t')-z}+\norm{z-\pos(t_{n-1})} \leq 2\rho.  \]
    Finally, as for every $k \in \{1,\dots,n-1\}$ we have $\norm{\pos(t_k)- (y +\frac{k-1}{m}(z-y))}\leq \rho$, and $(y +\frac{k-1}{m}(z-y))\in C'$, we conclude that $\pos(t_k)\in C \cap  B_{K\rho}(x)$. In particular, as we already know by hypothesis that $x \in B_{K\rho}(\pos(t_{n}))$, we conclude that $x\in \bigcap_{k=0}^n B_{K\rho}(\pos(t_{k}))$.\end{proof}

Next we will make the correspondence between models in $\Gamma$ and tilings $\Omega_\circ(\PP)$ precise. To that end, we define $\Psi \colon \Omega_\circ(\PP) \to (M \cup \{\varnothing \})^{S^*}$ inductively as follows:
\begin{enumerate}
    \item $\Psi(T)(\varepsilon) = T \sqcap B_{K\rho}$.
    \item For every $w \in S^*$ and $s \in S$:
    \begin{enumerate}
        \item If $\Psi(T)(w) \neq \varnothing$, $\init(s) = \Psi(T)(w)$ and $T-\val(ws) \in \Omega_\circ(\PP)$, we let \[\Psi(T)(ws) = (T-\val(ws))\sqcap B_{K\rho}.    \]
        \item Otherwise, we set $\Psi(T)(ws) = \varnothing$.
    \end{enumerate}
\end{enumerate}

The correspondence between $\Omega_{\circ}(\PP)$ and $\mathcal{M}(\Gamma(\PP,K,L))$ can be expressed formally as follows.

\begin{prop}\label{prop:geometric_main}
    For $K \geq 117$ and $L \geq 2K+14$ the map $\Psi$ is an homeomorphism between $\Omega_{\circ}(\PP)$ and $\M(\Gamma(\PP,K,L))$.
\end{prop}

The rest of this section will be completely devoted to prove~\Cref{prop:geometric_main} and none of those results will serve any other purpose. We suggest the reader to first assume this proposition as true and jump to the next section before delving into this proof. We also do not claim that the values of $K$ and $L$ are optimal, they are just the smallest ones that make our arguments work.

Now we begin the proof of~\Cref{prop:geometric_main}. From this point onward we fix parameters $K \geq 117$ and $L \geq 2K+14$. Let $n$ be a positive integer. Clearly if $T,T'\in \Omega_{\circ}(\PP)$ match in $B_{(3n+K)\rho}$ then $\Psi(T)$ and $\Psi(T')$ must coincide on all words $w\in S^*$ of length at most $n$, from where it follows that the map $\Psi$ is continuous. The injectivity of $\Psi$ also follows directly from the recursive definition. 

\begin{lemma}\label{lem:geom_is_model}
    $\Psi(T)\in \M(\Gamma)$ for every $T\in \Omega_\circ(\PP)$.
\end{lemma}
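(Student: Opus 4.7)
The plan is to verify the two defining conditions of a $\Gamma$-model for $\varphi \coloneqq \Psi(T)$, namely $\Gamma$-consistency (\Cref{def:consistent}) and invariance of $\varphi$ on $\Gamma$-equivalence classes (\Cref{def:model}).

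I would start by establishing, by induction on $|w|$, the closed-form identity $\varphi(w) = (T + \val(w)) \sqcap B_{K\rho}$ for every $w \in \supp(\varphi)$, proving $\Gamma$-consistency along the way. The base case is the defining clause $\varphi(\varepsilon) = T \sqcap B_{K\rho}$ together with the fact that $T \in \Omega_\circ(\PP)$. For the inductive step at $ws$, the ``otherwise'' branch of \Cref{def:consistent} is the ``otherwise'' branch of the recursive definition of $\Psi$ verbatim, giving the second bullet of $\Gamma$-consistency at once. When $\init(s) = \varphi(w) \eqqcolon m$, I would write $s = (m, v)$ with $v = \pos(t)$ for some tile $t \in m$; the closed form of the inductive hypothesis realizes $t$ as an actual tile of the appropriate translate of $T$, and the fact that $v$ is the position of such a tile, combined with the additivity of $\val$, forces the recursion's non-empty branch for $\Psi(ws)$ to activate and yields $\varphi(ws) = (T + \val(ws)) \sqcap B_{K\rho} \in M$, closing the induction.

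With the closed form in hand, the remaining first bullet of $\Gamma$-consistency, namely $\varphi(ws) \in \ter(s)$, is immediate: writing $m' = \varphi(ws)$, both $m$ and the translate $m' + v$ appear as intersections of one and the same translate of $T$ with $B_{K\rho}$ and $B_{K\rho}(v)$ respectively, so they coincide on the overlap $B_{K\rho} \cap B_{K\rho}(v)$, which is precisely the defining condition for $\ter(s)$.

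Finally, for invariance on $\Gamma$-equivalence classes, by transitive closure it suffices to treat the $\Gamma$-similar case, that is, $u = xwy$ and $v = xw'y$ with $(w, w') \in R$ and $u, v \in \supp(\varphi)$. The definition of $R$ directly forces $\val(w) = \val(w')$, and hence $\val(u) = \val(v)$; the closed-form identity then gives $\varphi(u) = (T + \val(u)) \sqcap B_{K\rho} = (T + \val(v)) \sqcap B_{K\rho} = \varphi(v)$, as required. The only real subtlety of the whole argument lies in the inductive step for the closed form, where the geometric constraint ``$v = \pos(t)$ for some $t \in m$'' built into the definition of $S$ must be exploited to guarantee that the recursion does not prematurely fall into its ``otherwise'' branch; everything else is bookkeeping.
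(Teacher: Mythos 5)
Your proof is correct and follows essentially the same route as the paper's: establish the closed form $\Psi(T)(w) = (T+\val(w))\sqcap B_{K\rho}$ on the support, deduce $\Gamma$-consistency from the definitions of $S$, $M$ and $\Psi$, and use that $\Gamma$-equivalent words have the same valuation (which the paper obtains by citing Lemma~\ref{lem:valuation_is_well_defined}, and which you re-derive inline from the $\Gamma$-similar case and the fact that $R$ preserves $\val$). You simply spell out more of the bookkeeping that the paper condenses into ``immediate from the definitions.''
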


\begin{proof}
    Let $T = \{t_i\}_{i \in I}  \in \Omega_{\circ}(\PP)$. The fact that $\Psi(T)$ is $\Gamma$-consistent is immediate from the definitions of $\Psi$, $S$ and $M$. Let us argue that if $w,w'\in \supp(\Psi(T))$ are $\Gamma$-equivalent, then $\Psi(T)(w) = \Psi(T)(w')$. Indeed, notice that for any $w\in \supp(\Psi(T))$ then $\Psi(T)(w) = (T-\val(w))\sqcap B_{K\rho}$. By~\Cref{lem:valuation_is_well_defined} it follows that if $w,w'\in \supp(\Psi(T))$ are $\Gamma$-equivalent, then $\val(w)=\val(w')$. In particular  \[ \Psi(T)(w) = (T-\val(w))\sqcap B_{K\rho} = (T-\val(w')) \sqcap B_{K\rho} = \Psi(T)(w'). \]
    From where we deduce that $\Psi(T)$ is a model.\end{proof}

All that remains to prove~\Cref{prop:geometric_main} is to show that $\Psi$ is surjective. For a $\Gamma$-model $\varphi$ and $w \in \supp(\varphi)$ we let $t_w = \val(w) +(\varphi(w)\sqcap \{0\})$. Notice that if $w,w'\in \supp(\varphi)$ are $\Gamma$-equivalent, then by the fact that $\varphi$ is a model we have $\varphi(w)=\varphi(w')$, and by~\Cref{lem:valuation_is_well_defined} we know $\val(w)=\val(w')$, thus $t_w = t_{w'}$. With this in mind, we define the set of tiles \[ T_{\varphi} = \{t_{w} : w \in  \supp(\varphi) \}. \]



    We will show that $T_{\varphi}$ is indeed a punctured tiling, that is $T_{\varphi}\in \Omega_\circ(\PP)$. If we have that, then from the definition of $\Psi$ it would follow that $\Psi(T_{\varphi})=\varphi$ and thus that $\Psi$ is surjective. For the remainder of the section, we fix a $\Gamma$-model $\varphi$ and let $T_{\varphi}$ be as above.

    For an integer $n \geq 0$, we define the set $\mathcal{R}(n)$ of all words whose valuation and those of their prefixes have norm at most $n\rho$, that is  \[ \mathcal{R}(n) = \{ w \in \supp(\varphi): \mbox{ for all prefixes } w' \mbox{ of } w, \norm{\val(w')} \leq n\rho \}. \]
    
    \begin{lemma}\label{lem:induccion-culia-fea}
        For every integer $n\geq 0$, and every pair of words $u,v\in \mathcal{R}(n)$, we have that $\varphi(u) + \val(u)$ matches $\varphi(v) + \val(v)$ in $B_{K\rho}(\val(u))\cap B_{K\rho}(\val(v))$.
    \end{lemma}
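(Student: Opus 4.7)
The plan is to argue by induction on $n$. The base case $n=0$ is immediate: every generator $s \in S$ satisfies $0 < \|\val(s)\| \leq 3\rho$ by definition, so every non-empty word has a length-one prefix of non-zero valuation. Thus $\mathcal{R}(0) = \{\varepsilon\}$ and the matching condition is trivial.

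For the inductive step, I would assume the lemma on $\mathcal{R}(n)$ and take $w, v \in \mathcal{R}(n+1)$. The statement is vacuous unless $F := B_{K\rho}(\val(w)) \cap B_{K\rho}(\val(v))$ is non-empty, which I will assume. Fix $x \in F$ and let $t \in \varphi(w) + \val(w)$, $t' \in \varphi(v) + \val(v)$ be the (unique) tiles covering $x$; the goal becomes showing $t = t'$.

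The main tool will be a double application of the Visibility Lemma (\Cref{lem:approach}): once inside $\varphi(w) + \val(w)$, to construct a sequence of tiles linking $t_w$ (at $\val(w)$) to $t$ (at $\pos(t)$) through $x$, and once inside $\varphi(v) + \val(v)$ for a symmetric chain from $t_v$ to $t'$. Each chain has length at most $K + 2$ and yields a word $u_w$ (resp.~$u_v$) extending $w$ (resp.~$v$) with $\val(wu_w) = \pos(t)$, $\val(vu_v) = \pos(t')$, and tile at the origin of $\varphi(wu_w)$ equal to $t - \pos(t)$ (similarly for $v$). Since every tile along both chains contains $x$ in its $K\rho$-ball, iterating the patch-level consistency encoded in the definition of $\ter$ --- namely, that adjacent patches agree on the intersection of their $K\rho$-balls --- forces both final patches to match on a neighborhood of the tile containing $x$. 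The length bound $|u_w| + |u_v| \leq 2(K+2) < L = 2K+6$ puts us within the range of the defining relations of $R$.

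The hard part will be linking the two walks, since $u_w$ and $u_v$ start from different states $\varphi(w) \neq \varphi(v)$ in general and the relations in $R$ cannot be applied directly to their concatenation. To overcome this I would use the inductive hypothesis to produce auxiliary identifications between intermediate patches: every tile on the two chains contains $x$ in its $K\rho$-ball, so after suitable translation the corresponding patches fall within the hypothesis on $\mathcal{R}(n)$ and match pairwise. Composing these matchings yields a cascade of equalities that bridges the two walks and shows that the origin tiles of $\varphi(wu_w)$ and $\varphi(vu_v)$ represent the same tile of $\RR^d$, which is precisely $t = t'$.
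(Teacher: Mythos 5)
Your base case is correct and your use of the Visibility Lemma to chain tiles through $x$ is in the spirit of the paper's argument, but the inductive step has a genuine gap at exactly the place you flag as ``the hard part.'' You propose to bridge the two walks by ``suitable translation'' so that the intermediate patches ``fall within the hypothesis on $\mathcal{R}(n)$ and match pairwise.'' This does not hold in general: if $\|\val(w)\|$ is close to $(n+1)\rho$ and $x$ lies near the far edge of $B_{K\rho}(\val(w))$, then $\|x\|$ can be as large as $(n+1+K)\rho$, and the tiles along the chains from $t_w$ (resp. $t_v$) to the tile covering $x$ have positions far outside $B_{n\rho}$. Those tiles therefore do \emph{not} correspond to any word in $\mathcal{R}(n)$, so the inductive hypothesis simply cannot be invoked on them. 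Similarly, the length bound $|u_w|+|u_v| \leq 2K+4 < L$ is not enough to put you ``within the range of the defining relations,'' because a relation in $R$ requires the two words to share the \emph{same} initial state, whereas $u_w$ is read starting from $\varphi(w)$ and $u_v$ from $\varphi(v)$; you need an explicit path joining $\val(w)$ to $\val(v)$ read from a common basepoint, and nothing in your outline provides that.

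The missing ingredient is a case split on $\|x\|$. In the \emph{short-range} regime $\|x\| \leq (n+K-12)\rho$, the paper first locates a word $u' \in \mathcal{R}(n)$ whose valuation is simultaneously within $(K-2)\rho$ of $\val(u)$ and of $x$ (this uses the fact that $T_n$ already tiles $B_{(n-1)\rho}$, Claim~\ref{claim:well-tiled-patch}, together with a careful midpoint construction and Claim~\ref{claim:forma_canonica}), and then builds a short bridging word $p_{u'u}$ with $\val$-tracked consistency at $\{x\}$; only afterwards does it apply the inductive hypothesis to the pair $u',v' \in \mathcal{R}(n)$. In the \emph{long-range} regime $\|x\| \geq (n+K-12)\rho$, a Pythagorean estimate using the specific choice $K=117$ shows that $\|\val(u)-\val(v)\| \leq (K-2)\rho$, which allows a direct chain from $u$ to $v$ of controlled length; the inductive hypothesis is used via Claim~\ref{claim:forma_canonica} to produce a $\Gamma$-equivalent word witnessing the relation in $R$. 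Neither the case analysis nor the pullback of the problem into $\mathcal{R}(n)$ appears in your proposal, and without them the bridging step does not go through. I would encourage you to examine why the bound $\|\val(u)-\val(v)\| \leq (K-2)\rho$ fails in general and what makes it true when $x$ is far from the origin — that computation is where the numerical value of $K$ earns its keep.
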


    The case $n=0$ is trivial: by our definition, $\val(s)>0$ for every generator, thus the only word such that every prefix has valuation at most $0$ is the empty word, that is, $\mathcal{R}(0) = \{\varepsilon\}$.

    Let $n \geq 1$ and suppose the inductive hypothesis holds for $n$. We will show that it also holds for $n+1$ through Claims~\ref{claim:short-range} and~\ref{claim:long-range}. before proving those claims, we will need two preliminary results.
    
    Consider the collection of tiles
    $$T_n = \{t_w : w \in \mathcal{R}(n)\}.$$
    That is, the set of all tiles which can be read from the model starting form a word in $\mathcal{R}(n)$. 

    \begin{claim}\label{claim:well-tiled-patch}
       $T_n$ is a partial tiling, and for $n \geq 1$ its support contains $B_{\rho (n-1)}$.
    \end{claim}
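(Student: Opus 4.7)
The statement splits into two independent assertions: that $T_n$ has tiles with pairwise disjoint interiors, and that for $n\geq 1$ its support covers $B_{\rho(n-1)}$. The first uses only the outer inductive hypothesis at $n$; the second is proved by an inner induction on $n$ that piggybacks on the previous step of the outer induction (where the analogous statement for $T_{n-1}$ has already been established).

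\emph{Disjointness.} Suppose $t_w$ and $t_v$ with $w,v\in\mathcal{R}(n)$ share a common interior point $x$. Since every tile of $T_\varphi$ satisfies $t_u\subset B_\rho(\val(u))$, one has $x\in B_\rho(\val(w))\cap B_\rho(\val(v))\subset B_{K\rho}(\val(w))\cap B_{K\rho}(\val(v))$. By the inductive hypothesis of Lemma~\ref{lem:induccion-culia-fea} at $n$, the patches $\varphi(w)+\val(w)$ and $\varphi(v)+\val(v)$ match on this intersection. Since $t_w$ and $t_v$ both meet this region, both lie in the single patch $\varphi(v)+\val(v)$, which is a bona fide patch of a tiling in $\Omega_\circ(\PP)$, and so has tiles with pairwise disjoint interiors. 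Hence $t_w=t_v$.

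\emph{Covering.} Proceed by induction on $n$. The base $n=1$ is immediate because $0\in t_\varepsilon$ and $\varepsilon\in\mathcal{R}(1)$. For $n\geq 2$, let $y\in B_{\rho(n-1)}$. If $\|y\|\leq(n-2)\rho$, the inductive hypothesis (applied at the previous outer step) provides $w\in\mathcal{R}(n-1)\subset\mathcal{R}(n)$ with $y\in t_w$. Otherwise put $y'=\tfrac{(n-2)\rho}{\|y\|}\,y$, so that $y'$ lies on the segment from $0$ to $y$ with $\|y'\|=(n-2)\rho$ and $\|y-y'\|\leq\rho$; by induction there exists $w'\in\mathcal{R}(n-1)$ with $y'\in t_{w'}$. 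Writing $v=\val(w')=\pos(t_{w'})$, one gets $\|y-v\|\leq\|y-y'\|+\|y'-v\|\leq 2\rho\leq K\rho$. Since $\varphi(w')$ covers $B_{K\rho}$, the translated patch $\varphi(w')+v$ contains a unique tile $t$ with $y\in t$. If $\pos(t)=v$ then $t=t_{w'}$ and we are done; otherwise set $s=(\varphi(w'),\pos(t)-v)$ and $w=w's$. One checks $\|\pos(t)-v\|\leq\|\pos(t)-y\|+\|y-v\|\leq\rho+2\rho=3\rho$, so $s\in S$; and $\|\val(w)\|=\|\pos(t)\|\leq\|y\|+\rho\leq n\rho$, so $w\in\mathcal{R}(n)$. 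Finally, the matching built into the definition of $\ter(s)$ forces $\varphi(w)\sqcap\{0\}=t-v-\val(s)$, hence $t_w=t\ni y$.

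\emph{Where the work is.} The two parts are conceptually clean; the real care is in the second, where the inner induction must advance by exactly one tile-step and the triangle inequalities $\|y-y'\|\leq\rho$ and $\|y-v\|\leq 2\rho$ conspire so that the generator's valuation fits inside the prescribed window $\|\val(s)\|\leq 3\rho$ and the extended word stays inside $\mathcal{R}(n)$. This saturates the ``$3\rho$'' bound used to define generators, which is precisely the reason the generating set was taken to include all relative positions up to $3\rho$. The translation of the syntactic step $w\mapsto ws$ into a geometric step from $t_{w'}$ to the adjacent tile $t$ is supplied by the definition of $\ter(s)$; no properties of $K$ beyond $K\geq 3$ are invoked here, the larger bound $K\geq 117$ being reserved for the short- and long-range claims that follow.
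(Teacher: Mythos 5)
Your proof is correct. The disjointness half is identical in spirit to the paper's: invoke the inductive hypothesis of Lemma~\ref{lem:induccion-culia-fea} at stage $n$ to see that the two patches match on the ball-intersection containing a shared interior point, and then use disjointness of tile interiors within a single patch.

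For the covering half, however, you take a genuinely different route, and I think a cleaner one. The paper argues by contradiction: it sets $\eta = \inf\{\|y\| : y \notin \supp(T_n)\}$, picks a near-optimal $x$ and a slightly shrunken $y = (1 - 2\delta/\|x\|)x$ inside $\supp(T_n)$, steps from the tile containing $y$ to one containing $x$ by a single generator, and then --- crucially --- must invoke the fact that the group $\langle \val(s) : s \in S\rangle$ is a \emph{discrete} subgroup of $\RR^d$ to choose $\delta$ small enough that the bound $\|\val(ws)\| \leq n\rho + \delta$ collapses to $\|\val(ws)\| \leq n\rho$, placing $ws$ in $\mathcal{R}(n)$ and contradicting $x \notin \supp(T_n)$. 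Your argument avoids the discreteness trick entirely: you induct directly on $n$ (the base $n=1$ is the origin in $t_\varepsilon$, and the covering of $T_{n-1}$ is available from the previous stage of the outer induction), radially project $y$ to $y'$ on the sphere of radius $(n-2)\rho$, locate a tile $t_{w'}$ covering $y'$, and step once by a generator of valuation $\pos(t)-v$ to a tile $t = t_{w's}$ covering $y$; the triangle inequalities $\|y-y'\|\le\rho$, $\|y'-v\|\le\rho$, $\|\pos(t)-y\|\le\rho$ land $\|\val(s)\|\le 3\rho$ and $\|\val(w's)\|\le n\rho$ exactly as needed, with no $\delta$-slack to close. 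You also correctly handle the $\pos(t)=v$ degenerate case explicitly, which the paper leaves implicit, and you correctly verify that the tile at position $\val(w's)$ in the next patch is indeed $t$ using the matching built into $\ter(s)$. Both arguments produce the same constant; yours is more constructive and dispenses with a standing discreteness hypothesis that the paper does not belabor.
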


    \begin{proof}
        First we check that $T_n$ is a partial tiling. Suppose there are $u,v \in \mathcal{R}(n)$ such that $\operatorname{int}(t_u) \cap \operatorname{int}(t_v) \neq \varnothing$. By the inductive hypothesis we have that $\varphi(u) + \val(u)$ matches $\varphi(v) + \val(v)$ in $B_{K\rho}(\val(u))\cap B_{K\rho}(\val(v))$, in particular if we let $x \in \operatorname{int}(t_u) \cap \operatorname{int}(t_v)$, we must have that $t_u$ matches $t_v$ in $\{x\}$, thus $t_v = t_v$. This shows that distinct tiles in $T_n$ have pairwise disjoint interiors.

        Next we check that $B_{\rho (n-1)}\subset \supp(T_n)$. Let \[ \eta = \inf_{y \in \RR^d\setminus \supp(T_{n})} \norm{y}.  \]
     
     We have that $\eta>0$ as $t_{\varepsilon} = \varphi(\varepsilon) \sqcap \{0\} \in \PP$ contains the origin in its interior. Let $0 <\delta<\frac{\min(\eta,\rho)}{2}$ and take $x \in \R^d\setminus \supp(T_{n})$ which satisfies $\norm{x} - \delta \leq \eta$. Let $y = (1-\frac{2\delta}{\norm{x}})x$, as $2\delta<\eta\leq \norm{x}$, we have that $0 < (1-\frac{2\delta}{\norm{x}})<1$ and thus $\norm{y} = \norm{x}-2\delta$, from which we obtain that $y \in \supp(T_{n})$. It follows that there exists $w \in \mathcal{R}(n)$ such that $y \in t_w$. In particular, $\norm{y-\val(w)}\leq \rho$. We deduce that \[ \norm{x-\val(w)} \leq \norm{x-y}+\norm{y-\val(w)}+ \leq 2\delta+\rho <2\rho.   \]

    As $\varphi(w)$ covers $B_{K\rho}$, it follows that here exists $t \in \varphi(w)$ such that $x-\val(w)\in t$ and thus $\norm{\pos(t)} < 3\rho$. It follows that if we take $s = (\varphi(w),\pos(t))$ then necessarily $ws\in \supp(\varphi)$ and we have $x \in t_{ws} = t+\val(w)$. Moreover, \[\norm{\val(t_{ws})} \leq \norm{x}+\rho \leq \eta + \rho+\delta.\]

    If we suppose $\eta \leq \rho(n-1)$, we would have $\norm{\val(t_{ws})} \leq \rho n + \delta$. Noting that $\val$ takes values on a discrete subgroup $G$ of $\RR^d$ (that is, $G =\langle \val(s): s \in S\rangle$), there is $\delta>0$ small enough such that if $\norm{\val(t_{ws})} \leq \rho n + \delta$ implies that in fact $\norm{\val(t_{ws})} \leq \rho n$. Taking this value of $\delta$ 
     we get that $\norm{\val(t_{ws})} \leq \rho n$, thus $ws \in \mathcal{R}(n)$, which is a contradiction as this would imply that $x \in \supp(T_n)$. We conclude that $\eta > \rho(n-1)$.
    \end{proof}

    Next we show that given $w \in \mathcal{R}(n+1)$ we can replace it for another word for which all strict subwords have valuation at most $n\rho$.

    \begin{claim}\label{claim:forma_canonica}
        Let $w\in \mathcal{R}(n+1)$ be a nonempty word. There exists $u \in \mathcal{R}(n)$ and $s \in S$ such that $us \in \supp(\varphi)$ and $us$ is $\Gamma$-equivalent to $w$. In particular $\varphi(us)+\val(us) = \varphi(w)+\val(w)$.
    \end{claim}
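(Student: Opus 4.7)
The plan is a strong induction on $|w|$. The base case $|w|=1$ is immediate: writing $w=s$, take $u=\varepsilon\in\mathcal{R}(n)$ and note that $us=w$.

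For the inductive step $|w|\ge 2$, I would first locate a word $u_0\in\mathcal{R}(n)$ whose valuation is within $3\rho$ of $v:=\val(w)$. Since $\|v\|\le(n+1)\rho$, the point $y=v$ (if $\|v\|\le(n-1)\rho$) or $y=v\,(1-2\rho/\|v\|)$ (otherwise) satisfies $\|y\|\le(n-1)\rho$ and $\|y-v\|\le 2\rho$. By Claim~\ref{claim:well-tiled-patch}, $y\in\supp(T_n)$, yielding $u_0\in\mathcal{R}(n)$ with $y\in t_{u_0}$ and therefore $\|\val(u_0)-v\|\le 3\rho$. The inductive hypothesis of Lemma~\ref{lem:induccion-culia-fea} at level $n$, applied to $u_0$ and $\varepsilon$, guarantees that the patches $\varphi(u_0)+\val(u_0)$ and $\varphi(\varepsilon)$ agree on $B_{K\rho}(0)\cap B_{K\rho}(\val(u_0))$, and since $\varphi$ is a model one can identify $t_w$ as a tile of $\varphi(u_0)+\val(u_0)$. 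This produces a valid generator $s=(\varphi(u_0),\,v-\val(u_0))\in S$ with $u_0 s\in\supp(\varphi)$, $\val(u_0 s)=v$, and $\varphi(u_0 s)=\varphi(w)$.

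The main obstacle is the verification that $u_0 s\sim_\Gamma w$. By the definition of $R$, this is direct whenever $|u_0 s|+|w|\le L$, but $|w|$ has no a priori bound. My plan is to reduce to this short case by a second, refined induction on the lexicographic pair $(|w|,\,k-j^*)$, where $j^*$ is the largest index with $w^{(j^*)}:=s_1\cdots s_{j^*}\in\mathcal{R}(n)$. At each step, the induction hypothesis applied to the strictly shorter word $w^{(j^*+1)}$ produces an equivalent $u'_{j^*} s'_{j^*}$ with $u'_{j^*}\in\mathcal{R}(n)$; substituting this into $w$ yields a word of the form $u'_{j^*} s'_{j^*} s_{j^*+2}\cdots s_k$ in which the excess beyond $\mathcal{R}(n)$ has strictly decreased, allowing the induction to continue until only a single letter lies outside $\mathcal{R}(n)$. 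At that stage the word has length bounded by a function of $n$, so the hypothesis $L\ge 2K+6$ suffices to bridge it to the specific $u_0 s$ built geometrically via a single application of a relation. The technical subtlety is to ensure the rewriting never reintroduces long prefixes outside $\mathcal{R}(n)$---this is expected to follow from the fact that the replacement $u'_{j^*}$ is built out of tiles lying in $B_{n\rho}$ by the same geometric procedure as $u_0$.
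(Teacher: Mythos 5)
Your overall plan—geometrically produce a candidate $u_0\in\mathcal{R}(n)$ with $\|\val(u_0)-\val(w)\|\le 3\rho$ and then justify $u_0 s\sim_\Gamma w$ by a rewriting induction—is in the right spirit, and the geometric step is fine. The gap is in the descent. Note first that your $j^*$ coincides with the paper's $j-1$, where $j$ is the smallest index with $\|\val(w_j)\|>n\rho$. When you apply the inductive hypothesis to $w^{(j^*+1)}$ and substitute $u'_{j^*}s'_{j^*}\sim_\Gamma w^{(j^*+1)}$ into $w$, the new word is $w'=u'_{j^*}s'_{j^*}s_{j^*+2}\cdots s_k$. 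Since $\val(u'_{j^*}s'_{j^*})=\val(w^{(j^*+1)})$ has norm $>n\rho$, the prefix $u'_{j^*}s'_{j^*}$ is \emph{not} in $\mathcal{R}(n)$, so the new ``$j^*$'' of $w'$ is exactly $|u'_{j^*}|$ and the new excess is $|w'|-|u'_{j^*}| = k-j^*$: it has not decreased. Moreover $|u'_{j^*}|$ is unbounded by the inductive hypothesis, so the total length can grow indefinitely under repeated substitutions; the loop never reaches your ``single letter outside $\mathcal{R}(n)$'' state, and the bound $L\ge 2K+6$ never becomes usable. The closing remark about ``tiles lying in $B_{n\rho}$'' does not repair this, because the obstruction is not that $u'_{j^*}$ leaves $\mathcal{R}(n)$—it doesn't—but that $u'_{j^*}s'_{j^*}$ must leave, so the count of letters past the $\mathcal{R}(n)$-prefix is preserved rather than reduced.

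The paper avoids this by a more delicate local surgery. Instead of rewriting the whole prefix $w_j$, it replaces only the two consecutive generators $s_j s_{j+1}$ by a word $a_0\cdots a_{11}$ of length exactly $12$, constructed via~\Cref{lem:sausage}, chosen so that the intermediate positions $\val(w_{j-1}a_0),\dots,\val(w_{j-1}a_0\cdots a_{10})$ all lie within $B_{n\rho}$ (the detour dips back toward the origin before reaching $\val(w_{j+1})$). This gives $w_{j-1}a_0\cdots a_{10}\in\mathcal{R}(n)$, so the first index of the rewritten word that leaves $\mathcal{R}(n)$ is at least $j+11$, while the length only grows by $10$, and hence the quantity $k-j$ drops by at least $1$. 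Since the pair $(a_0\cdots a_{11}, s_js_{j+1})$ has total length $14\le L$, it is a relation in $R$ and the rewriting is indeed a single $\Gamma$-similarity. You would need to incorporate this ``two-step replacement dipping back into $B_{(n-1)\rho}$'' idea—or something equivalent that provably shrinks a well-founded measure—to close the argument.
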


    \begin{proof}
        Write $w=s_1\dots s_k$ with each $s_i \in S$ and let $w_i = s_1\dots s_i$ and $w_0 = \varepsilon$. If for every $i\in \{1,\dots,k-1\}$ we have $\norm{\val(w_i)}\leq n\rho$, then we can just take $u = w_{k-1}$ and $s = s_k$. Otherwise, let $j$ be the smallest positive integer such that $\norm{\val(w_j)}>n \rho$.

        Take $y,z \in \RR^d$ as follows:
        \begin{align*}
            y & = \begin{cases}
            (n-1)\rho\frac{\val(w_{j-1})}{\norm{\val(w_{j-1})}} & \mbox{if } \norm{\val(w_{j-1})} > 0,\\
            0 & \mbox{otherwise}.
        \end{cases}\\
        z & = \begin{cases}
            (n-1)\rho\frac{\val(w_{j+1})}{\norm{\val(w_{j+1})}} & \mbox{if } \norm{\val(w_{j+1})} > 0,\\
            0 & \mbox{otherwise}.
        \end{cases} 
        \end{align*}
        As $w\in \mathcal{R}(n+1)$, we have that both $\norm{\val(w_{j-1})}$ and $\norm{\val(w_{j+1})}$ are at most $(n+1)\rho$. Moreover, as $\norm{\val(w_j)}>n \rho$, we have that $\norm{\val(w_{j-1})}$ and $\norm{\val(w_{j+1})}$ are at least $(n-3)\rho$ and thus we deduce that both values belong to the interval $\bigl( (n-3)\rho, (n+1)\rho)\bigr)$.
        
        It follows that by construction of $y$ and $z$ we have that both $\norm{y-\val(w_{j-1})} \leq 2\rho$ and $\norm{\val(w_{j+1})-z}\leq 2\rho$. From this and the fact $\norm{\val(w_{j-1})- \val(w_{j+1})}\leq 6\rho$ we obtain that
        \[ \norm{y-z} \leq \norm{y-\val(w_{j-1})} + \norm{\val(w_{j-1})- \val(w_{j+1})} + \norm{ \val(w_{j+1})-z} \leq 10\rho.   \]

         \begin{figure}[h!]
        \centering
        \begin{tikzpicture}
        {\clip (-6,1.5) rectangle (6,6);
            \draw[thick, fill = black!05] (0,0) circle (5);
            \draw[thick, dashed, color=black!80] (0,0) circle (6);
            \draw[thick, dashed, color=black!80] (0,0) circle (4);
             \node[blue] (A) at ({4.5*cos(110)},{4.5*sin(110)}){$\bullet$};
             \node[blue] at ({0.5+4.5*cos(110)},{4.5*sin(110)}){$w_{j-1}$};
             \node[blue] (B) at ({5.3*cos(80)},{5.3*sin(80)}){$\bullet$};
             \node[blue] at ({0.3+5.3*cos(80)},{5.3*sin(80)}){$w_{j}$};
             \node[blue] (C) at ({5.8*cos(50)},{5.8*sin(50)}){$\bullet$};
             \node[blue] at ({0.5+5.8*cos(50)},{5.8*sin(50)}){$w_{j+1}$};
             \node[purple] (BB) at ({4*cos(110)},{4*sin(110)}){$\bullet$};
             \node[purple] at ({4*cos(110)},{4*sin(110)-0.2}){$y$};
             \node[purple] (CC) at ({4*cos(50)},{4*sin(50)}){$\bullet$};
             \node[purple] at ({4*cos(50)},{4*sin(50)-0.2}){$z$};
             \node[black!50!green] (D0) at ({3.7*cos(120)},{3.7*sin(120)}){$\bullet$};
             \node[black!50!green] at ({3.7*cos(120)+0.2},{3.7*sin(120)}){$t_0$};
             \node[black!50!green] (D1) at ({3.7*cos(105)},{3.7*sin(105)}){$\bullet$};
             \node[black!50!green] at ({3.7*cos(105)+0.2},{3.7*sin(105)}){$t_1$};
             \node[black!50!green] (D2) at ({3.1*cos(95)},{3.1*sin(95)}){$\bullet$};
             \node[black!50!green] at ({3.1*cos(95)+0.2},{3.1*sin(95)}){$t_2$};
             \node[black!50!green] (D3) at ({4.1*cos(90)},{4.1*sin(90)}){$\bullet$};
             \node[black!50!green] at ({4.1*cos(90)+0.2},{4.1*sin(90)}){$t_3$};
             \node[black!50!green] (D4) at ({3*cos(65)},{3*sin(65)}){$\bullet$};
             \node[black!50!green] at ({3*cos(65)+0.2},{3*sin(65)}){$t_8$};
             \node[black!50!green] (D5) at ({4.2*cos(60)},{4.2*sin(60)}){$\bullet$};
             \node[black!50!green] at ({4.2*cos(60)+0.2},{4.2*sin(60)}){$t_9$};
             \node[black!50!green] (D6) at ({3.3*cos(45)},{3.3*sin(45)}){$\bullet$};
             \node[black!50!green] at ({3.3*cos(45)+0.2},{3.3*sin(45)}){$t_{10}$};
             \draw[color =black!50, -> ] (A) to (D0);
             \draw[color =black!50, -> ] (D0) to (D1);
             \draw[color =black!50, -> ] (D1) to (D2);
             \draw[color =black!50, -> ] (D2) to (D3);
             \draw[color =black!50, dotted ] (D3) to (D4);
             \draw[color =black!50, -> ] (D4) to (D5);
             \draw[color =black!50, -> ] (D5) to (D6);
             \draw[color =black!50, -> ] (D6) to (C);
             \draw[color =black, -> ] (A) to (B);
             \draw[color =black, -> ] (B) to (C);}
             \draw [fill = white, color=white] (-6,1.5) rectangle (6,2);
             \node at (4.5,1.8) {\scalebox{0.8}{$n\rho$}};
             \node at (5.5,1.8) {\scalebox{0.8}{$(n\text{+}1)\rho$}};
             \node at (3.5,1.8) {\scalebox{0.8}{$(n\text{-}1)\rho$}};
            
        \end{tikzpicture}
        \caption{Sketch of the proof of~\Cref{claim:forma_canonica}}
        \label{fig:proof_forma_canonica}
    \end{figure}
        
        By~\Cref{lem:sausage} applied to $y,z$ and $T_n$, we obtain that there is a sequence $t_0,\dots,t_{10}$ of tiles in $T_n$ which are at consecutive distance at most $3\rho$ and at distance $\rho$ from the interval between $y$ and $z$. For each $\ell \in \{0,\dots,10\}$ take $u_{\ell}\in \mathcal{R}(n)$ such that $t_{\ell}=t_{u_{\ell}}$, see~\Cref{fig:proof_forma_canonica} for a sketch of this construction. Consider now the tuples 
        \begin{align*}
            a_0 &  = (\varphi(w_{j-1}), \val(u_0) -\val(w_{j-1}))\\
            a_i & = (\varphi(u_{i-1}),\val(u_{i-1})-\val(u_i)) \mbox{ for } i \in \{1,\dots,10\}\\
            a_{11} & = (\varphi(u_{10}),\val(w_{j+1})-\val(u_{10})).
        \end{align*}

        Notice that each of the second coordinates gives a vector of length at most $3\rho$. Moreover, by the inductive hypothesis, all of the patches in the first coordinate match pairwise, thus $a_0,\dots,a_{11} \in S$ and  $w_{j-1}a_0\dots a_{11} \in \supp(\varphi)$.
        
        Finally, notice that if we remove the last generator, we have $w_{j-1}a_0\dots a_{10} \in \mathcal{R}(n)$. Moreover, we have \[\val(a_0\dots a_{11}) = \val(w_{j+1})-\val(w_{j-1}) =\val(s_js_{j+1}).\]
        As $|a_0\dots a_{11}| + |s_js_{j+1}|\leq L$, it follows that $(a_0,\dots, a_{11},s_js_{j+1}) \in R$ (recall the definition of $R$ before~\Cref{def:patchblueprint}), in particular, the words $w_{j-1}a_0\dots a_{11}$ and $w_{j+1}$ are $\Gamma$-equivalent. Replacing $w_{j+1}$ by $w_{j-1}a_0\dots a_{11}$, we obtain a $\Gamma$-equivalent word where the value $k-j$ has been reduced by at least $1$. We obtain the desired decomposition iterating this procedure.
    \end{proof}

    The next claim proves~\Cref{lem:induccion-culia-fea} in the case where $x$ is ``close'' to $B_{n\rho}$. The main idea is to show that that, given $u,v \in \mathcal{R}(n+1)$ there are words $u',v'\in \mathcal{R}(n)$ whose value is also within range of $x$ and whose associated patches will match with those of $u,v$ and thus coincide on $\{x\}$. A picture sketching the argument is given in~\Cref{fig:proof_close_range}

    \begin{figure}
        \centering
        \begin{tikzpicture}
        {\clip (-6,1.5) rectangle (6,8);
            \draw[thick, fill = black!05] (0,0) circle (5);
            \draw[thick, dashed, color=black!80] (0,0) circle (5.5);
            \draw[thick, dashed, color=black!80] (0,0) circle (4.5);
            \node[blue] (A) at ({5.4*cos(140)},{5.4*sin(140)}){$\bullet$};
            \node[blue, rotate=40] at ({-0.3+5.4*cos(140)},{0.2+5.4*sin(140)}){$\val(u)$};
            \node[blue] (B) at ({5.2*cos(40)},{5.2*sin(40)}){$\bullet$};
            \node[blue, rotate=-40] at ({0.4+5.2*cos(40)},{0.4+5.2*sin(40)}){$\val(v)$};
            \node[blue] (C) at ({7.8*cos(80)},{7.8*sin(80)}){$\bullet$};
            \node[blue] at ({0.2+7.8*cos(80)},{7.8*sin(80)}){$x$};
            \node[purple] (C2) at ({2.5*cos(80)},{2.5*sin(80)}){$\bullet$};
            \node[purple] at ({0.2+2.5*cos(80)},{-0.2+2.5*sin(80)}){$x'$};

            \draw[color =black, dashed ] (C2) to (A);
            \draw[color =black, dashed ] (C2) to (B);

            \node[purple] (AA) at ({0.5*5.4*cos(140)+0.5*2.5*cos(80)},{0.5*5.4*sin(140)+0.5*2.5*sin(80)}){$\bullet$};
            \node[purple, rotate=0] at ({0.5*5.4*cos(140)+0.5*2.5*cos(80)},{-0.3+0.5*5.4*sin(140)+0.5*2.5*sin(80)}){$\val(u')$};
            
            \node[purple] (BB) at ({0.5*5.2*cos(40)+0.5*2.5*cos(80)},{0.5*5.2*sin(40)+0.5*2.5*sin(80)}){$\bullet$};
            \node[purple, rotate=0] at ({0.5*5.2*cos(40)+0.5*2.5*cos(80)},{-0.3+0.5*5.2*sin(40)+0.5*2.5*sin(80)}){$\val(v')$};
             
             \draw[color =black, - ] (A) to node [midway, above, rotate=40] {\scalebox{0.8}{$\leq K\rho$}} (C) ;
             \draw[color =black, - ] (B) to node [midway, above, rotate=-60] {\scalebox{0.8}{$\leq K\rho$}} (C) node [midway, above] {$\leq K\rho$};
             \draw[color =black, - ] (C) to (C2);

             \draw[color =black, dashed ] (C) to (AA);
            \draw[color =black, dashed ] (C) to (BB);
             
        }
             \draw [fill = white, color=white] (-6,1.5) rectangle (6,2);
             \node at (4.5,1.8) {\scalebox{0.8}{$n\rho$}};
             \node at (5.2,1.8) {\scalebox{0.8}{$(n\text{+}1)\rho$}};
             \node at (3.8,1.8) {\scalebox{0.8}{$(n\text{-}1)\rho$}};
            
        \end{tikzpicture}
        \caption{Sketch of the proof of~\Cref{claim:short-range}}
        \label{fig:proof_close_range}
    \end{figure}
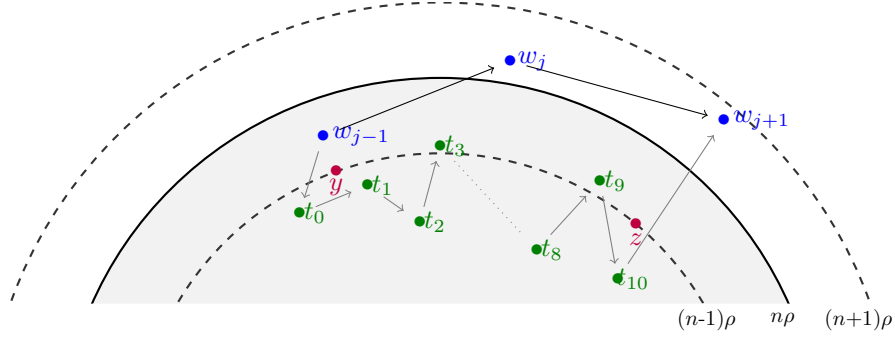

    \begin{claim}\label{claim:short-range}[short range consistency]
        Let $u,v \in \mathcal{R}(n+1)$. For any $x\in B_{K\rho}(\val(u))\cap B_{K\rho}(\val(v))$ with $\norm{x}\leq (n+K-12)\rho$ we have that $\varphi(u)+\val(u)$ matches $\varphi(v)+\val(v)$ on $\{x\}$.
    \end{claim}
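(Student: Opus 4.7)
The plan is to chain the matching of patches through intermediate words in $\mathcal{R}(n)$, using the inductive hypothesis together with the geometric slack provided by the short-range bound on $\norm{x}$. The overall structure will mirror the kind of chaining argument already used in Claim~\ref{claim:forma_canonica}, combined with the Visibility Lemma.

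First, I would apply Claim~\ref{claim:forma_canonica} to $u$ and $v$ individually, producing $u',v'\in\mathcal{R}(n)$ and generators $s_u,s_v\in S$ with $u's_u\equiv_\Gamma u$ and $v's_v\equiv_\Gamma v$. Since $\varphi$ is a $\Gamma$-model, $\varphi(u's_u)=\varphi(u)$ and $\val(u's_u)=\val(u)$, and the generator relation (via $\init(s_u)$, $\ter(s_u)$) immediately yields that $\varphi(u)+\val(u)$ matches $\varphi(u')+\val(u')$ on the lens $B_{K\rho}(\val(u))\cap B_{K\rho}(\val(u'))$, with the analogous statement for $v$. Note that $\norm{\val(u)-\val(u')}\le 3\rho$, so this lens is nearly all of either ball.

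Second, I would exploit the hypothesis $\norm{x}\le(n+K-12)\rho$ to produce a proxy $w\in\mathcal{R}(n)$ with $\val(w)$ close to $x$. Concretely, take $y$ on the segment $[0,x]$ with $\norm{y}=\min(\norm{x},(n-1)\rho)$; this gives $\norm{y-x}\le(K-11)\rho$. By Claim~\ref{claim:well-tiled-patch}, $y\in B_{(n-1)\rho}\subset\supp(T_n)$, so there is a tile $t_y\in T_n$ containing $y$. Writing $t_y=t_w$ for some $w\in\mathcal{R}(n)$, one gets $\norm{\val(w)-x}\le(K-10)\rho$, so in particular $x\in B_{K\rho}(\val(w))$.

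Third, I would construct, via the Visibility Lemma (Lemma~\ref{lem:approach}) applied to the partial tiling $T_n$, a chain of tiles $t_{u'}=t_{w_0},t_{w_1},\ldots,t_{w_m}=t_w$ in $T_n$ with $\norm{\val(w_i)-\val(w_{i-1})}\le 3\rho$ and $x\in\bigcap_{i=0}^m B_{K\rho}(\val(w_i))$, together with an analogous chain from $t_w$ to $t_{v'}$. The inductive hypothesis applied to each consecutive pair $(w_i,w_{i+1})\in\mathcal{R}(n)^2$ ensures that $\varphi(w_i)+\val(w_i)$ and $\varphi(w_{i+1})+\val(w_{i+1})$ agree at $x$, and iterating transfers this agreement from $\varphi(u')+\val(u')$ to $\varphi(v')+\val(v')$ via $\varphi(w)+\val(w)$.

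The main obstacle will be reconciling the generator endpoint steps with the chain, since a priori $x$ need not lie in $B_{K\rho}(\val(u'))$ or $B_{K\rho}(\val(v'))$: the bound gives only $\norm{x-\val(u')}\le(K+3)\rho$. To circumvent this I would argue directly about the tile $t\ni x$ in $\varphi(u)+\val(u)$: since $\pos(t)$ lies within $\rho$ of $x$ and within $(K-9)\rho$ of $\val(w)$, the tile $t$ is contained in $B_{K\rho}(\val(w))$ and thus must coincide with the tile at $x$ in $\varphi(w)+\val(w)$ provided $P_u$ and $P_w$ are already known to be mutually consistent at that location; this consistency is precisely what the chain from the Visibility Lemma delivers, once we note that the first few tiles in the chain from $t_{u'}$ lie inside the generator overlap $B_{K\rho}(\val(u))\cap B_{K\rho}(\val(u'))$ and so serve as a bridge between $P_u$ and the inductive family. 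The $12\rho$ slack in the hypothesis is exactly what is needed to absorb the accumulated $\rho$-scale errors (tile radius, two reduction offsets of $3\rho$, the $(K-11)\rho$ projection distance to $\supp(T_n)$, and the step size of the Visibility chain), and this is why the same argument fails in the long-range regime, where a different technique is required.
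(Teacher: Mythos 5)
Your proposal identifies the right tools (forma canonica, the visibility lemma, and chaining via the inductive hypothesis), but the choice of proxy word $u'$ creates a gap that the suggested workaround does not close.

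The core issue is that a single application of Claim~\ref{claim:forma_canonica} produces $u'\in\mathcal{R}(n)$ with $\norm{\val(u')-\val(u)}\le 3\rho$, hence only $\norm{x-\val(u')}\le(K+3)\rho$. You acknowledge this, but everything downstream breaks because of it: you cannot apply the inductive hypothesis to the pair $(u',w_1)$ at the point $x$ (the inductive hypothesis only constrains the two patches on $B_{K\rho}(\val(u'))\cap B_{K\rho}(\val(w_1))$, and $x$ need not be in $B_{K\rho}(\val(u'))$), and you cannot invoke Lemma~\ref{lem:approach} with endpoint $t_{u'}$ because that lemma requires $x\in B_{K\rho}(\pos(t_{u'}))$. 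The fallback (``argue directly about the tile $t\ni x$, using the first few chain tiles as a bridge through the generator overlap'') does not repair this: the generator overlap only forces agreement of $P_u$ and $P_{u'}$ on $B_{K\rho}(\val(u))\cap B_{K\rho}(\val(u'))$, which still need not contain $x$ (or even $\pos(t)$), and nothing forces the visibility chain to stay inside that overlap on its way to $w$. In effect there is no location where the chain's ``inductive family'' and the generator step are both simultaneously known to carry the same tile at $x$.

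The paper avoids this by constructing $u'$ differently: in the hard case $n\rho<\norm{\val(u)}\le(n+1)\rho$, it retracts $x$ to $x'$ at radius $(n-6)\rho$, takes the midpoint $z$ of $x'$ and $\val(u)$, and uses~\Cref{claim:well-tiled-patch} to find $u'\in\mathcal{R}(n)$ with $z\in t_{u'}$. This yields the crucial twin bounds $\norm{\val(u')-x}\le(K-2)\rho$ \emph{and} $\norm{\val(u')-\val(u)}\le(K-2)\rho$, so that $x$ genuinely lies in $B_{K\rho}(\val(u'))$ and the visibility/induction machinery applies. Matching $P_u$ to $P_{u'}$ at $x$ then also requires more than a single generator step: the paper applies forma canonica twice (getting $w\in\mathcal{R}(n-1)$ and $a,b\in S$ with $u\equiv_\Gamma wab$), builds a detour word $p_{w,u'}p_{u'u}$ of total length at most $2K+4$ via Lemmas~\ref{lem:sausage} and~\ref{lem:approach}, and uses the relation-length bound $L\ge 2K+6$ to conclude that $ab$ and $p_{w,u'}p_{u'u}$ are $\Gamma$-equivalent. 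That interplay with the finite presentation (the role of $L$) is entirely absent from your proposal and is essential to bridge the $\mathcal{R}(n+1)$ word $u$ to the inductive family at the point $x$. Your point (2) --- finding $w\in\mathcal{R}(n)$ near $x$ --- is sound, but it plays only a minor supporting role; the missing idea is the careful placement of $u'$ and the $L$-bounded $\Gamma$-equivalence.
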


    \begin{proof}
        We will first show that there exists a word $u'\in \mathcal{R}(n)$ for which $x\in B_{K\rho}(\val(u'))$ and such that $\varphi(u')+\val(u')$ matches with $\varphi(u)+\val(u)$ on $\{x\}$. Notice that if $\|\val(u)\|\leq n\rho$, by~\Cref{claim:forma_canonica} we may take $u' \in \mathcal{R}(n)$ which is $\Gamma$-equivalent to $u$ and thus satisfies the requirements.
        
        Let us study the case where $n\rho <\norm{\val(u)} \leq (n+1)\rho$. We claim that there exists a word $u'\in \mathcal{R}(n)$ with the property that both $\val(u)$ and $x$ are at distance at most $(K-2)\rho$ of $\val(u')$. Indeed, let \[x' = \begin{cases}
            (n-6)\rho\frac{x}{\norm{x}} & \mbox{if } \norm{x} > 0 \mbox{ and } n \geq 6,\\
            0 & \mbox{otherwise.}
        \end{cases}\] 

    As $x \in B_{K\rho}(\val(u))$ and $\norm{\val(u)}> n\rho$, we obtain that $\norm{x} > (n-K)\rho$. From this and $\norm{x}\leq (n+K-12)\rho$ we obtain that $\norm{x'-x} \leq (K-6)\rho$.

    For $\lambda \in [0,1]$ set $x_{\lambda} = \lambda x'+(1-\lambda) \val(u)$. Elementary computations show that 
    \begin{enumerate}
        \item $\norm{x_{\lambda}} \leq (n+1-6\lambda)\rho$.
        \item $\norm{x_{\lambda} - x} \leq (K-6\lambda)\rho$ 
        \item $\norm{x_{\lambda} - \val(u)} \leq 2\lambda(K-3)\rho.$
    \end{enumerate}
    Setting $z = x_{\lambda}$ for $\lambda = \frac{1}{2}$, we obtain that $\norm{z}\leq (n-2)\rho$, $\norm{z-x}\leq (K-3)\rho$ and $\norm{z-\val(u)} \leq (K-3)\rho$. As $\norm{z}\leq (n-2)\rho$, it follows by~\Cref{claim:well-tiled-patch} that $B_{(n-2)\rho}$ is contained in the support of $T_n$. Thus we deduce that there exists $u' \in \mathcal{R}(n)$ such that $z \in t_{u'}$, in particular $\norm{z-\val(u')}\leq \rho$. This implies $\|\val(u')\|\leq (n-1)\rho$, $\norm{\val(u')-x}\leq (K-2)\rho$ and $\norm{\val(u')-\val(u)} \leq (K-2)\rho$ as required.\\

    We will next show that $\varphi(u')+\val(u')$ matches with $\varphi(u)+\val(u)$ on $\{x\}$. 
    Applying~\Cref{claim:forma_canonica} to $u$ twice, we obtain $w \in \mathcal{R}(n-1)$ and $a,b \in S$ such that $u$ is $\Gamma$-equivalent to $wab$. Using~\Cref{lem:sausage} with $T_n$, $C=B_{(n-1)\rho}$ and positions $\val(w)$ and $\val(u')$, we obtain a sequence of at most \[\lceil \rho^{-1}\norm{\val(w)-\val(u')}\rceil \leq \lceil \rho^{-1}(\norm{\val(u)-\val(u')} + \norm{\val(ab)})\rceil \leq K+4\] tiles connecting them in $T_n$. From these tiles we get a word $p_{w,u'}$ of length at most $K+4$ such that $wp_{w,u'}$ is $\Gamma$-equivalent to $u'$.

    Take $T' = \varphi(u')+\val(u')$ and consider the tiles $t_{u'}$ and $t_u$. It is clear that $t_{u'} \in T'$, to see that $t_u \in T'$, note that as both $u', w \in \mathcal{R}(n)$, by the inductive hypothesis we have that $\varphi(u')+\val(u')$ matches $\varphi(w)+\val(w)$ at $\val(u)$, hence as $(\varphi(w)+\val(w))\sqcap \{\pos(u)\} = t_u$, we deduce that $t_u \in T'$. Furthermore, as $\norm{\val(u')-\val(u)} \leq (K-2)\rho$, we deduce that $B_{2\rho}(\pos(u))\subset \supp(T')$. Finally, notice that $\norm{x-\pos(t_{u})} = \norm{x-\val(u)} \leq K\rho$ and $\norm{x-\pos(t_{u'})} = \norm{x-\val(u')} \leq (K-2)\rho$ 

    By~\Cref{lem:approach}, there exists a path of tiles of length at most $2 + \lceil\rho^{-1}\norm{\val(u)-\val(u')} \rceil \leq K$ between $t_{u'}$ and $t_u$ with the property that for each tile $t$ in this path we have $x \in B_{K\rho}(\pos(t))$. From here, we get a word $p_{u'u}$ of length at most $K$, with the property that for each subword $p'$ we have that $x \in B_{K\rho}(\val(u'p'))$. In particular, as subsequent words must match in their intersection, we deduce that $\varphi(u'p_{u'u})+\val(u'p_{u'u})$ matches $\varphi(u')+\val(u')$ at $\{x\}$.

    Consider the words $ab$ and $p_{w,u'}p_{u'u}$. Clearly they have the same initial state (which is $\varphi(w)$) and $\val(ab) = \pos(u) - \pos(w) = \val(p_{w,u'}p_{u'u})$. As $|ab|+|p_{w,u'}p_{u'u}| \leq 2K+6 \leq L$, we deduce that $ab$ and $p_{w,u'}p_{u'u}$ are $\Gamma$-equivalent. This in turn implies that $u$ and $u'p_{u'u}$ are $\Gamma$-equivalent, and thus $\varphi(u)+\val(u)$ matches $\varphi(u')+\val(u')$ at $\{x\}$.\\

    Applying the same argument to $v$ we obtain that there exists $v'\in \mathcal{R}(n)$ such that $\varphi(v')+\val(v')$ matches with $\varphi(v)+\val(v)$ on $\{x\}$. As both $u',v' \in \mathcal{R}(n)$, the inductive hypothesis shows that $\varphi(v')+\val(v')$ matches with $\varphi(u')+\val(u')$ on $\{x\}$, from where we deduce that $\varphi(v)+\val(v)$ matches with $\varphi(u)+\val(u)$ on $\{x\}$.   \end{proof}

    The final claim deals with the case when $x$ is ``far'' from $B_{K\rho}$. The main observation, is that in this case the hypothesis that $x\in B_{K\rho}(\val(u))\cap B_{K\rho}(\val(v))$ will force $\val(u)$ and $\val(v)$ to be close, thus we will be able to connect them back and forth using a pair of words whose added lengths are at most $2K+14\leq L$ and their valuations are all within distance $K\rho$ of $x$, thus showing what we want.

     \begin{claim}[long range consistency]\label{claim:long-range}
        Let $u,v\in \mathcal{R}(n+1)$. For any $x\in B_{K\rho}(\val(u))\cap B_{K\rho}(\val(v))$ such that $\norm{x}\geq (n+K-12)\rho$ we have that $\varphi(u)+\val(u)$ matches $\varphi(v)+\val(v)$ on $\{x\}$.
    \end{claim}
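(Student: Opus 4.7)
The strategy is to adapt the chain-of-patches idea from Claim~\ref{claim:short-range}, but since $\|x\|>(n+K-12)\rho$ precludes finding an intermediate $u'\in\mathcal{R}(n)$ close to both $\val(u)$ and $x$, I will transport the matching along a walk from $\val(u)$ to $\val(v)$ through a region where $x$ remains visible. The crucial geometric fact is that both $\val(u)$ and $\val(v)$ lie in $B_{K\rho}(x)$, so by convexity the entire line segment $\ell=[\val(u),\val(v)]$ lies in $B_{K\rho}(x)\cap B_{(n+1)\rho}(0)$. I place waypoints $z_0=\val(u),z_1,\dots,z_m=\val(v)$ along $\ell$ with $\|z_{i+1}-z_i\|\leq (K-6)\rho$; since $\|\val(u)-\val(v)\|\leq 2K\rho$, one has $m\leq\lceil 2K/(K-6)\rceil$, a constant depending only on $K$.

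Next, I construct inductively a chain of words $w_0=u,w_1,\dots,w_m$ in $\supp(\varphi)$ with $\val(w_i)\in B_\rho(z_i)$, such that $w_{i+1}$ is obtained from $w_i$ by a bounded-length sequence of generator applications. The patch $P_{w_i}=\varphi(w_i)+\val(w_i)$ has support containing $B_{K\rho}(\val(w_i))$, and the target waypoint $z_{i+1}$ lies at distance at most $(K-6)\rho+\rho$ from $\val(w_i)$, so there exists a tile in $P_{w_i}$ whose position is within $\rho$ of $z_{i+1}$ and whose $2\rho$-ball is contained in the convex set $C=B_{K\rho}(\val(w_i))$. Lemma~\ref{lem:approach} applied to $P_{w_i}$, $C$, source $t_{w_i}$, this target tile, and the distinguished point $x$ yields a bounded-length path of tiles all contained in $B_{K\rho}(x)$; reading off the corresponding generators produces $w_{i+1}$. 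By the $\ter$-matching property of generators, $P_{w_i}$ and $P_{w_{i+1}}$ coincide on $B_{K\rho}(\val(w_i))\cap B_{K\rho}(\val(w_{i+1}))$, a set that contains $x$ (both centers lie in $B_{K\rho}(x)$). Chaining these matchings across $i=0,\dots,m-1$ shows that $\varphi(u)+\val(u)$ matches $\varphi(w_m)+\val(w_m)$ on $\{x\}$.

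To close the loop, I apply the same construction symmetrically from $v$ to obtain a chain $v=w'_0,\dots,w'_{m'}$ arriving at a word $w'_{m'}$ with $\val(w'_{m'})$ within $\rho$ of some intermediate waypoint (equivalently, arrange both chains to meet near the midpoint $m^*$ of $\ell$). Since $\|m^*\|\leq (n+1)\rho\leq (n+K-12)\rho$ whenever $K\geq 13$, the meeting point lies in the short-range regime. Applying Claim~\ref{claim:short-range} between appropriately chosen $\mathcal{R}(n+1)$ representatives of the walk endpoints (obtained by replacing the bounded-length walks with $\Gamma$-equivalent canonical forms; the bound $L\geq 2K+6$ on relation lengths provides just enough slack for this, analogously to the chain $p_{w,u'}p_{u'u}$ in the proof of Claim~\ref{claim:short-range}), one deduces matching at $m^*$, which, combined with the two chains from $u$ and $v$, transitively gives $\varphi(u)+\val(u)$ matches $\varphi(v)+\val(v)$ on $\{x\}$.

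The main obstacle is the final bridging step: the walk endpoint $w_m$ is not automatically $\Gamma$-equivalent to $v$, so extra care is needed to invoke short-range between them. The argument requires controlling prefix norms along the constructed walks to stay within $\mathcal{R}(n+1)$ (so that Claim~\ref{claim:short-range} applies), together with a $\Gamma$-equivalence argument using the relation length budget $L\geq 2K+6$. The parameter choices $K\geq 117$, $L\geq 2K+6$ are calibrated precisely so that (i) the number of waypoints is bounded, (ii) Lemma~\ref{lem:approach} applies at every step with comfortable margin $(K-6)\rho\leq (K-4)\rho$, and (iii) the canonical-form replacements of walks of length $O(K)$ can be expressed as $\Gamma$-similarities of combined length at most $L$.
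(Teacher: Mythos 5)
Your overall strategy — transport the matching along a walk from $\val(u)$ to $\val(v)$ that stays within sight of $x$, then close the loop via $\Gamma$-equivalence — is the right idea, and the visibility Lemma~\ref{lem:approach} is indeed the correct tool. However, you are missing the single most important step of the paper's proof, and the gap is not cosmetic.

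The paper begins by proving, via an orthogonal-projection/Pythagorean argument, that in the long-range regime one automatically has the much tighter bound $\norm{\val(u)-\val(v)}\leq (K-2)\rho$: since $\norm{x}\geq (n+K-12)\rho$ while both $\val(u),\val(v)$ have norm at most $(n+1)\rho$, the triangle with apex $x$ is ``thin,'' and computing the foot of the perpendicular gives $\norm{\val(u)-\val(v)}\leq 2\sqrt{30K-225}\,\rho$, which is $\leq (K-2)\rho$ precisely when $K\geq 117$. You only use $\norm{\val(u)-\val(v)}\leq 2K\rho$, which is far too coarse. That looser bound forces your multi-waypoint scheme, and the scheme does not fit the relation budget: with $m\geq 3$ waypoints each requiring a Lemma~\ref{lem:approach} walk of length up to roughly $K-4$, the word you build from $u$ to $v$ has length on the order of $3K$, and after adding the $K+6$-length return word you cannot stay under $L=2K+6$. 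In the paper's proof, the tight bound $\norm{\val(u)-\val(v)}\leq (K-2)\rho$ is exactly what allows a \emph{single} application of Lemma~\ref{lem:approach} (giving a walk of length $\leq K$) plus a single bridging word (length $\leq K+6$), totalling $2K+6\leq L$, so the two words qualify as a relation.

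There is a second, smaller slip: you position the waypoint words so that $\val(w_i)\in B_\rho(z_i)$ with $z_i\in B_{K\rho}(x)$, which only gives $\val(w_i)\in B_{(K+1)\rho}(x)$, not $B_{K\rho}(x)$. This breaks the hypothesis $x\in B_{K\rho}(\pos(t))\cap B_{K\rho}(\pos(t'))$ needed to invoke Lemma~\ref{lem:approach} along your chain. The paper sidesteps this entirely because it works directly with $t_u$ and $t_v$ (whose positions are $\val(u)$ and $\val(v)$, already in $B_{K\rho}(x)$ by hypothesis) rather than with $\rho$-approximate waypoints. To repair your argument you would need to re-derive the $(K-2)\rho$ bound anyway; once you have it, the multi-waypoint construction becomes unnecessary.
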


    \begin{proof}
    We first show that in this case we have $\norm{\val(u)-\val(v)}\leq (K-2)\rho$.
    
    Suppose that $\norm{\val(u)-\val(v)}> (K-2)\rho$ and consider the orthogonal projection $x'$ of $x$ onto the line $\{\val(u) + r(\val(v)-\val(u)) \ : \ r\in \R\}$. As both $\norm{x-\val(u)}$ and $\norm{x-\val(v)}$ are at most $K\rho$, we deduce that $x'$ is at distance at most $2\rho$ from the segment $[\val(u),\val(v)] = \{ \lambda \val(u) + (1-\lambda)\val(v) : \lambda \in [0,1]\}$ and thus that $\norm{x'} \leq (n+3)\rho$. Since $\norm{x}\geq (n+K-12)\rho$, we deduce that $\norm{x-x'}\geq (K-15)\rho$. By the Pythagorean theorem we have \[ \norm{\val(u)-x'}^2 = \norm{x-\val(u)}^2 -\norm{x-x'}^2 \leq K^2\rho^2 - (K-15)^2\rho^2.    \]
    From where we obtain that \[  \norm{\val(u)-x'} \leq \rho( \sqrt{30K-225}). \]
    Analogously, we deduce the same bound for $\norm{\val(v)-x'}$ and thus we get \[ \norm{\val(u)-\val(v)} \leq 2\rho( \sqrt{30K-225}).  \]

    As $K \geq 117$, we have that $2( \sqrt{30K-225})\leq K-2$ and thus we deduce that \[ \norm{\val(u)-\val(v)} \leq (K-2)\rho.  \]
    Yielding a contradiction.

    Now consider the partial tiling $T'=\varphi(u)+\val(u)$. By~\Cref{claim:short-range} and the facts that $\norm{\val(u)-\val(v)} \leq (K-2)\rho$ and $\val(v)\leq (n+1)\rho$, we deduce that $t_v \in T'$. Applying~\Cref{lem:approach} to $t_u,t_v$ in $T'$ with $C=B_{K\rho}(\val(u))$, we get $k \leq 2+ \norm{\val(v)-\val(u)}/\rho \leq K$ and a sequence of tiles $t_0,\dots, t_k\in T'$ with $t_0=t_u$, $t_k = t_v$ and such that $\norm{\pos(t_{k+1})-\pos(t_k)}\leq 3\rho$ and $x \in \bigcap_{i=0}^k B_{\rho K}(t_i)$. 
    
    Consequently, we may extract a word $w=s_1\dots s_k \in S^*$ by letting  \[s_{i+1} = (\varphi(us_1 \dots s_{i}),\pos(t_{i+1})-\pos(t_i)) \quad \mbox{ for } j \in \{1,\dots,k-1\}.\] 
    This word has the property that $\val(uw) = \val(v)$, and that for each $i\in \{1,\dots,k\}$, $x \in B_{K\rho}(\pos(u s_1\dots s_i))$. From the definition of $M$ we deduce that $\varphi(u)+\val(u)$ matches $\varphi(uw)+\val(uw)$ at $\{x\}$.

    Using~\Cref{claim:forma_canonica} twice, we can find $u',v'\in \mathcal{R}(n-1)$ and $s_u,s'_u,s_v,s'_v \in S$ such that $u's'_us_u$ and $v's'_vs_v$ are $\Gamma$-equivalent to $u$ and $v$ respectively. In particular, $\norm{\val(u')-\val(v')}\leq (K-2)\rho+12\rho = (K+10)\rho$. Applying~\Cref{lem:sausage} we can extract a word $\tilde{w}$ of length at most $K+10$ with the property that $u'\tilde{w}\in \mathcal{R}(n)$ and such that $\val(u'\tilde{w})=\val(v')$. By induction hypothesis, we deduce that $\underline{u'\tilde{w}}_{\Gamma} = \underline{v'}_{\Gamma}$. It follows that we may construct a word $w'$ of length $|\tilde{w}|+4=K+14$ connecting $u$ and $v$ such that $\underline{uw'}_\Gamma =\underline{v}_{\Gamma}$. As $|w|+|w'|\leq 2K+14 \leq L$, we deduce that $v$ is $\Gamma$-equivalent to $uw$ and thus that $\varphi(u)+\val(u)$ matches $\varphi(v)+\val(v)$ at $\{x\}$.\end{proof}

    Putting together Claims~\ref{claim:short-range} and~\ref{claim:long-range} we get~\Cref{lem:induccion-culia-fea}. As $T_{\varphi} = \bigcup_{n \geq 1}T_n$, by~\Cref{claim:well-tiled-patch} we obtain that $T_{\varphi}$ is indeed a tiling of $\RR^d$ by punctured tiles in $\PP$, thus we have proven~\Cref{prop:geometric_main}.

\subsection{The Domino Problem on Geometric Tilings} We first show that the domino problem of the patch blueprint is undecidable whenever the dimension of the ambient space is at least $2$. 

\begin{prop}
\label{prop:patchdomino}
    Fix $K \geq 117$ and $L \geq 2K+14$. Let $\Gamma = \Gamma(\PP,K,L)$ be the patch blueprint of a set of punctured tiles $\PP$ of $\RR^d$ with FLC. If $d \geq 2$, then
    \begin{enumerate}
        \item the $\Gamma$-domino problem is undecidable.
        \item For every $\varphi \in \mathcal{M}(\Gamma)$, the $\varphi$-domino problem is undecidable.
    \end{enumerate}
\end{prop} 

\begin{proof}
It is well known that the domino problem for the group $\ZZ^d$ is undecidable for $d\geq 2$ and it is clear that its blueprint is minimal, as it contains a single model. As $\ZZ^d$ is quasi-isometric to $\RR^d$, it suffices by~\Cref{thm:domino,thm:domino_model} to show that every model graph of $\Gamma$ is quasi-isometric to $\R^d$. 
    
Consider a model $\varphi\in\M(\Gamma)$. We define the map $f\colon G(\Gamma,\varphi)\to \RR^d$ by setting $f(\underline{w}_{\Gamma}) = \val(w)$ for $w \in \supp(\varphi)$. Notice that $f$ is well-defined by~\Cref{lem:valuation_is_well_defined}. By Proposition~\ref{prop:geometric_main} there exists a unique tiling $T\in\Omega_0(\PP)$ such that $\Psi(T) = \varphi$. Furthermore, for all $w \in \supp(\varphi)$ we have $t_w = (\varphi(w) \sqcap \{0\})+\val(w) \in T$. Notice also that $\pos(t_w) = \val(w)$ for every $w \in \supp(\varphi)$.

Now, if we denote by $d$ the quasi-metric on $G(\Gamma,\varphi)$, by the definition of the generating set we have
$$\|f(\underline{w}_{\Gamma}) - f(\underline{v}_{\Gamma})\|\leq 3\rho\cdot d(\underline{w}_{\Gamma}, \underline{v}_{\Gamma}),$$
for all $w, v\in\supp(\varphi)$. For the other inequality, take $w,v \in\supp(\varphi)$. By applying Lemma~\ref{lem:sausage} to $t_w$ and $t_v$, there exists a path from $\underline{w}_{\Gamma}$ to $\underline{v}_{\Gamma}$ in $G(\varphi,\Gamma)$ of length at most  $\lceil \rho^{-1}\norm{\pos(t_w) - \pos(t_v)}\rceil$. In other words,
$$\rho\cdot d(\underline{w}_{\Gamma}, \underline{v}_{\Gamma}) - \rho \leq \|f(\underline{w}_{\Gamma}) - f(\underline{v}_{\Gamma})\|.$$
This proves that $f$ is a quasi-isometric embedding. To prove the quasi-density condition, take $x\in\R^d$. As $T$ is a tiling, there exists $t\in T$ such that $x\in t$ and $\|x-\pos(t)\|\leq \rho$. As $\varphi = \Psi(T)$, there exists $w\in\supp(\varphi)$ such that $\varphi(w) = t$ and $\val(w) = \pos(t)$. Therefore, $\norm{f(w)-x} = \norm{x-f(w)}\leq\rho$. This proves $f$ is a quasi-isometry between $G(\Gamma, \varphi)$ and $\R^d$.
\end{proof}

Now that we know that the Domino Problem is undecidable for the patch blueprint (on both variants), we interpret this result in terms of the underlying tiling. \\

Let $A$ be a finite alphabet and let $\mathcal{T}$ be a set of tiles. A \define{colored tile} is a tuple $(t,a)\in \mathcal{T}\times A$.  Given a colored tile $c=(t,a)$ its translation by $v\in \RR^d$ is given by $c+v = (t+v,a)$. A colored (partial) tiling is a set $T = \{ (t_i,a_i)\}_{i \in I}$ of colored tiles such that its \define{geometric projection} $\pi(T) = \{ t_i\}_{i \in I}$ is a (partial) tiling of $\RR^d$. Similarly, we define colored clusters, patches and their translations.\\

Let $\PP$ be a set of punctured tiles and $A$ a finite alphabet. A colored partial tiling $T = \{c_i\}_{i \in I}$ is \define{generated} by $(\PP,A)$ if for every $i \in I$ there exists $v \in \RR^d$ such that $c_i+v \in \PP\times A$. The space of colored tilings generated by $(\PP,A)$ is denoted by $\Omega(\PP,A)$ and the subspace of punctured colored tilings $T$ which satisfy $\pi(T)\in \Omega_{\circ}(\PP)$ is denoted by $\Omega_{\circ}(\PP,A)$.

We can define interesting sets of colored tilings by forbidding colored partial tilings with finite support.

\begin{defn}
   Let $\PP$ be a set of punctured tiles and $A$ a finite alphabet. Let $\mathcal{F}$ be a collection of colored partial tilings with finite support. The \define{geometric subshift} generated by $\PP,A$ and $\mathcal{F}$ is the space \[  \Omega(\PP,A,\mathcal{F}) = \{ T \in \Omega(\PP,A) : \mbox{ for all } P \in \mathcal{F} \mbox{ and } v \in \R^d, P+v \not\subset T\}.  \]
   If $\FF$ is finite, we say that $\Omega(\PP,A,\mathcal{F})$ is a geometric subshift of \define{finite type}.
\end{defn}

Moreover, we can consider colorings of a single (uncolored) tiling. Given $\widehat{T}\in \Omega(\PP)$, we set \[ \Omega(\PP,A,\FF,\widehat{T}) = \{ T \in \Omega(\PP,A,\FF) : \pi(T) = \widehat{T}\}. \]

\begin{example}
    Consider the set $\PP$ of punctured hat monotiles from~\Cref{ex:monotile} and take $A = \{ \begin{tikzpicture}
        \draw[black, thick, fill=azul] (0,0) circle (0.1); 
    \end{tikzpicture},\begin{tikzpicture}
        \draw[black, thick, fill=rojo] (0,0) circle (0.1); 
    \end{tikzpicture} \}$ as the alphabet which consists of the colors blue and red respectively. Let $\FF$ be the set of all patches which consist on two adjacent tiles colored with red. Notice that there are finitely many of these patches. The geometric subshift of finite type $\Omega(\PP,A,\FF)$ is an analogue of the hard-square subshift on tilings by the monotile (see Example~\ref{ex:hardsquare}). A colored patch without forbidden patterns is shown in~\Cref{fig:monotiling_colores}.
\end{example}

\begin{figure}[ht!]
    \centering
    \begin{tikzpicture}[scale=0.7]
\foreach\i in {0,...,11} \foreach[count=\j]\jj in {A,...,H} 
  \coordinate (\jj\i) at ({\j+0.5*mod(\i,2)},{\i*cos(30)});
\pic[tile1,fill=rojo,scale=0.7]                      at (B1) {tile};
\pic[tile1,rotate=240, fill=azul,scale=0.7]           at (D2) {tile};
\pic[tile1,rotate=120,fill=azul,scale=0.7]           at (G2) {tile};
\pic[tile1,rotate=300,xscale=-1,fill=azul,scale=0.7] at (B3) {tile};
\pic[tile1,rotate=300,fill=azul,scale=0.7]           at (E3) {tile};
\pic[tile1,rotate=60, fill=rojo,scale=0.7]            at (H3) {tile};
\pic[tile1,rotate=120,fill=azul,scale=0.7]           at (A4) {tile};
\pic[tile1,fill=rojo,scale=0.7]                      at (D4) {tile};
\pic[tile1,rotate=60,fill=azul,scale=0.7]            at (B5) {tile};
\pic[tile1,fill=azul,scale=0.7]                      at (E5) {tile};
\pic[tile1,rotate=180,fill=azul,scale=0.7]           at (G6) {tile};
\pic[tile1,rotate=240,fill=azul,scale=0.7]           at (B7) {tile};
\pic[tile1,rotate=180,fill=rojo,scale=0.7]           at (A8) {tile};
\pic[tile1,rotate=120,fill=azul,scale=0.7]           at (G8) {tile};
\pic[tile1,rotate=120,fill=rojo,scale=0.7]           at (E9) {tile};
\pic[tile1,rotate=240,fill=azul,scale=0.7]           at (B9) {tile};
\end{tikzpicture}
    \caption{A patch of the hard square subshift defined over tilings by the hat monotile.}
    \label{fig:monotiling_colores}
\end{figure}

Next we define the domino problem for geometric subshifts over a fixed set of punctured tiles $\PP$. Intuitively, this is the decision problem where one asks, given a finite alphabet $A$ and a finite set of forbidden partial tilings with finite support $\FF$, whether $\Omega(\PP,A,\FF) \neq \varnothing$. In particular, we are interested in the decidability of this problem, that is, does there exists a Turing machine which takes as input an instance of the decision problem, and halts if and only if $\Omega(\PP,A,\FF) \neq \varnothing$? \\

There is one problem with this naive approach: as we do not ask for any computability condition on the set $\PP$, it is not true that one can computably generate partial tilings by tiles in $\PP$ (or even decide simple things such as if the translation of two tiles has nonempty intersection). Therefore it is not obvious how to encode $\FF$. However, there is a way to abstractly encode geometric subshifts of finite type if one assumes that $\PP$ has FLC. Indeed, let $\rho = \inf\{ r >0 : \mbox{ for all } p \in \PP, p \subset B_r\}$ and take the set $\mathcal{D}$ of all $v \in \RR^d$ with $0 < \norm{v}\leq 3\rho$ for which there exists $p,p' \in \PP$ such that $\{p, p'+v\}$ is a partial tiling that occurs in some $T\in \Omega_{\circ}(\PP)$. By the assumption of FLC, we have that $\mathcal{D}$ is finite, and it is not hard to see (for instance, using~\Cref{lem:sausage}) that if $T\in \Omega_{\circ}(\PP)$ and $t\in T$, then $t = p+x$ for some $p \in \PP$ and some $x$ in the discrete additive subgroup $\langle \mathcal{D}\rangle \leqslant \RR^d$ which is generated by $\mathcal{D}$. We call $\langle \mathcal{D}\rangle $ the \define{punctured position group} of $\PP$.
Now, suppose $\mathcal{D}= \{v_1,\dots,v_k\}$. There is a surjective homomorphism $\eta \colon \ZZ^k \to \langle \mathcal{D}\rangle$ given by \[ \eta(n_1,\dots,n_k) = \sum_{i=1}^k n_iv_i. \]

\begin{defn}
    Let $A$ be a finite alphabet, $\PP$ be a set of punctured tiles and $\langle \mathcal{D}\rangle$ be its punctured position group. A \define{colored pretiling coding} is a pair $(F,\xi,c)$ where $F$ is a finite subset of $\ZZ^k$, $\xi \colon F \to \PP$ and $c\colon F \to A$. The \define{encoded colored pretiling} generated by $(F,\xi,c)$ is the collection of colored tiles \[\mathtt{E}(F,\xi,c) = \{ (\xi(v),c(v)) + \eta(v) \}_{v \in F}.     \]
    We say that $(F,\xi,c)$ is \define{consistent} if $\mathtt{E}(F,\xi,c)$ is a colored partial tiling. Given a collection $\mathcal{C}$ of colored pretiling codings, we write \[\mathcal{F}(\mathcal{C}) = \{ \mathtt{E}(F,\xi,c) : (F,\xi,c) \in \mathcal{C} \mbox{ is consistent} \}.   \]
\end{defn}

We remark that every colored partial tiling with finite support which occurs as the restriction of some tiling in $\Omega(\PP,A)$ can be encoded by a colored pretiling coding, and thus every for every set $\FF$ of colored partial tilings with finite support, there exists a set $\mathcal{C}$ of colored pretiling codings such that $\FF(\mathcal{C})=\FF$. In particular, if we identify the alphabet $A$ with its cardinality, there is a natural bijection from the set of all pairs $(A,\mathcal{C})$ to the natural numbers. Let $\langle A,\mathcal{C}\rangle$ denote this number.

\begin{defn}
    Let $\PP$ be a set of punctured tiles.
    \begin{enumerate}
        \item The \define{$\PP$-domino problem} is the decision problem which given as input an alphabet $A$ and a finite set of colored pretilings $\mathcal{C}$, whether $\Omega(\PP,A,\FF(\mathcal{C}))\neq \varnothing$.
        \item Let $\widehat{T}\in \Omega(\PP)$ be a tiling. The $\widehat{T}$-\define{domino problem} is the decision problem which given as input an alphabet $A$ and a finite set of colored pretilings $\mathcal{C}$, whether $\Omega(\PP,A,\FF(\mathcal{C}), \widehat{T})\neq \varnothing$.
    \end{enumerate}
\end{defn}


\begin{theorem}
\label{thm:domino_geom}
Let $d\geq 2$ and $\PP$ be a finite set of punctured tiles with FLC. Then
\begin{enumerate}
    \item The $\PP$-domino problem is undecidable.
    \item For every tiling $\widehat{T}\in \Omega(\PP)$, the $\widehat{T}$-domino problem is undecidable.
\end{enumerate}
\end{theorem}

\begin{proof}
 Take $K = 117$, $L=248$ and let $\Gamma = \Gamma(\PP,K,L)$ be the corresponding patch blueprint. By Proposition~\ref{prop:patchdomino} the $\Gamma$-domino problem is undecidable and for every $\varphi \in \M(\Gamma)$ the $\varphi$-domino problem is undecidable. We will first describe a total computable map that transforms nearest neighbor forbidden patterns for $\Gamma$ into a set of set of colored pretilings of $\PP$.
 
 Let $\mathcal{N}$ be a set of nearest neighbor forbidden patterns for $\Gamma$ over some finite alphabet $A$. Recall that a nearest neighbor pattern in this case is a map $p \colon \{\varepsilon,s\} \to (M\times A)$ for some $s \in S$. Consider $p \in \mathcal{N}$ and write $(m,a)=p(\varepsilon)$ and $(m',b) = p(s)$. For every $t \in m \cup (m' + \val(s))$, choose some $z_t\in \ZZ^k$ such that $\eta(z_t) = \pos(t)$. Write particularly $z_0$ and $z_s$ for values with $\eta(z_0)=0$ and $\eta(z_{s})=\val(s)$. We define \[ F_p = \{ z_t : t \in m \cup (m' + \val(s))\} \mbox{ and } \xi_p \colon F_p \to \PP \mbox{ with } \xi_p(z_t) = t - \pos(t). \] 
We associate to $p$ the collection $\mathcal{C}_{p}$ of colored pretiling codings given by \[\mathcal{C}_p = \{ (F_p,t_p,c) :  c(z_0)=a, c(z_{s})=b\}.    \]
Next we consider the collection \[ \mathcal{C} = \bigcup_{p \in \mathcal{N}} \mathcal{C}_p.  \]
We remark that $\mathcal{C}$ can be computed from $\mathcal{N}$. Indeed, as $M$ and $S$ are finite, there are finitely many possibilities for $m\cup(m+\val(s))$ and thus the association $t \to z_t$ can be hard-coded in an algorithm, and the rest is directly computable. \\

Next, we will show that if $T = \{(p_i,a_i)\}_{i \in I}\in \Omega(\PP,A,\mathcal{F}(\mathcal{C}))$ is a colored tiling such that $\pi(T)\in \Omega_{\circ}(\PP)$ with $\varphi= \Psi(\pi(T))$, then there exists $x\in (A\cup\{\varnothing\})^{S^{*}}$ such that $(\varphi,x)\in X[\Gamma,\mathcal{N}]$. 

Consider a colored tiling $T = \{(p_i,a_i)\}_{i \in I}\in \Omega(\PP,A,\mathcal{F}(\mathcal{C}))$. By Proposition~\ref{prop:geometric_main}, we can take $\varphi = \Psi(\pi(T))$ and define $x\in (A\cup\{\varnothing\})^{S^{*}}$ by
 \[x(w) = \begin{cases}
     a_i & \textnormal{if } w \in \supp(\varphi) \mbox{ and } \val(w)=\pos(p_i) \mbox{ for some } i \in I \\
     \varnothing & \textnormal{otherwise.}
 \end{cases}\]

By the definition of $\Psi$, notice that for $w \in \supp(\varphi)$ we have that $\varphi(w)= (\pi(T)-\val(w))\sqcap B_{K\rho}$. In particular, if we choose $i \in I$ with $t_i = (\pi(T)-\val(w))\sqcap \{0\}$, it follows that $x(w)=a_i$. Therefore, $\supp(x)=\supp(\varphi)$. Furthermore, by~\Cref{lem:movement_is_well_defined} we have that if $w,w'$ are $\Gamma$-equivalent, then $\val(w)=\val(w')$. It follows that $x(w)=x(w')$, thus conditions (s1) and (s2) of~\Cref{def:phi_subshift} are satisfied.

Next, suppose there exist $p \in \mathcal{N}$ with support $\{\varepsilon,s\}$ and $w \in \supp(\varphi)$ such that $(\varphi(w),x(w))=p(\varepsilon)=(m,a)$ and $(\varphi(ws),x(ws))=p(s)= (m',b)$. Consider \[T_p = (T - \val(w)) \sqcap (B_{K\rho}\cup B_{K\rho}(\val(s))).\]
From the definition of $\Psi$ it follows that $\pi(T_p) = m \cup (m'+\val(s))$. Consider the map $c \colon F_p \to A$ given by $c(z)= a_i$ where $a_i$ is such that $(p_i,a_i) \in T_p$ and $\pos(p_i)=\val(z)$ for $z\in F_p$. With this choice, it follows that $c(z_0)=x(w)=a$ and $c(z_s)=x(ws)=b$. Therefore $\mathtt{E}(F_m,\xi_m,c)$ is consistent and \[ \mathtt{E}(F_m,\xi_m,c) + \val(w) \subset T \]
 As $(F_p,\xi_p,c) \in \mathcal{C}_p \subset \mathcal{C}$, we deduce that $T \notin \Omega(\PP,A,\mathcal{F}(\mathcal{C}))$, which is a contradiction. Therefore, condition (s3) is verified and $(\varphi, x)\in X[\Gamma, \mathcal{N}]$.\\

Next we show the other direction, that if, that given $(\varphi, x)\in X[\Gamma,\mathcal{N}]$ we can construct $T=\{(t_i,a_i)\}_{i \in I}\in \Omega(\PP,A,\FF(\mathcal{C})$ such that $\pi(T)=\Psi^{-1}(\varphi)$.

Let $(\varphi, x)\in X[\Gamma,\mathcal{N}]$. By Proposition~\ref{prop:geometric_main} we can define a tiling $\{t_i\}_{i \in I} = \Psi^{-1}(\varphi)\in\Omega_0(\PP)$. By definition of $\Psi$, for each $i \in I$ there is $w \in \supp(\varphi)$ such that $\pos(t_i)=\val(w)$. Once again as $\Psi$ is a homeomorphism, it follows that if $\val(w)=\val(w')$, then $w$ is $\Gamma$-equivalent to $w'$. We can then unambiguously define $a_i = x(w)$, and consider the colored tiling $T=\{(t_i,a_i)\}_{i \in I}\in \Omega(\PP,A)$.

Suppose there is $p \in \mathcal{N}$ and a consistent $(F_p,\xi_p,c)\in \mathcal{C}_p$ and $v \in \RR^d$ such that $\mathtt{E}(F_p,\xi_p,c)+v \subset T$. If we write as before $p(\varepsilon)=(m,a)$ and $p(s)= (m',b)$, we have $\pi(\mathtt{E}(F_p,\xi_p,c)) = m\cup (m'+\val(s))$. By the definition of $\Psi$, we get that $v=\val(w)$ for some $w \in \supp(\varphi)$ and thus we deduce that $\varphi(w)=m$ and $\varphi(ws)=m'$. Finally, as $c(z_0)=a$ and $c(z_s)=b$, we deduce from the definition of $x$ that $x(w)=a$ and $x(ws)=b$, hence \[(\varphi(w),x(w))=p(\varepsilon) \mbox{ and }(\varphi(ws),x(ws))=p(s).\]
It follows that condition (s3) of~\Cref{def:phi_subshift} is not satisfied, and thus $(\varphi,x)\notin X[\Gamma,\mathcal{N}]$, which is a contradiction. Thus we deduce that  $T=\{(t_i,a_i)\}_{i \in I}\in \Omega(\PP,A,\FF(\mathcal{C}))$.

Now let $\widehat{T}\in \Omega(\PP)$. Shifting, we may suppose $\widehat{T}\in \Omega_{\circ}(\PP)$ as clearly for any $v\in \RR^d$ we have \[\Omega(\PP,A,\FF(\mathcal{C}), \widehat{T})\neq \varnothing \mbox{ if and only if } \Omega(\PP,A,\FF(\mathcal{C}), \widehat{T}+v)\neq \varnothing.\]  The previous arguments show that if we let $\varphi = \Psi(\widehat{T})$, then $\Omega(\PP,A,\FF(\mathcal{C}), \widehat{T})\neq \varnothing$ if and only if $X[\Gamma,\varphi,\FF]\neq \varnothing$. This proves both many-one reductions\end{proof}

We remark that for $d=2$~\Cref{thm:domino_geom} on its fixed tiling variant recovers the main result of Hellouin de Menibus, Lutfalla and Vanier~\cite[Theorem 2]{de2024decision}. The higher-dimensional case and the version for space of tilings are new.

\subsection{Euclidean tilings up to isometries}

So far, we have considered tilings of $\RR^d$ generated by a finite set of punctured tiles $\mathcal{P}$ by translations. One could also consider the set of tilings generated by isometries of $\RR^d$, in other words, in addition to translation we now also allow rotations and reflections of the punctured tiles. Formally, we consider the set of tilings $\Omega_{\operatorname{Isom}}(\PP)$ which consists on all tilings $\{t_i\}_{i \in I}$ of $\RR^d$ such that for every $i \in I$ we have $t_i = g_ip_i$ for some isometry $g_i \in \operatorname{Isom(\RR^d)}$ and $p_i \in \mathcal{P}$.

In this setting, one has to be more careful when defining finite local complexity. The appropriate notion is no longer to have finitely many patches that intersect a given ball up to translation, but up to isometries (this is usually called ``up to congruence'' in the literature). There are tiling spaces where these two notions differ, for instance the Pinwheel tiling~\cite{RadinPinwheel_1994}.

We argue that we can modify the patch blueprint so that it codifies these tiling spaces instead. Recall that every isometry of $\RR^d$ can be written in the form $x\mapsto Ax+v$ where $v \in \RR^d$ is a translation and $A \in O(d)$ is an orthogonal matrix. As in the previous case, we let $\rho$ be the radius of the smallest closed ball that contains every punctured prototile and fix positive integers $K$ and $L$. Under the assumption of FLC, there are finitely many isometry classes of patches of the form $T\sqcap B_{K\rho}$ and we take $\widehat{M}$ as a set of  representatives for these classes.

Next we set $\widehat{S}$ as the set of pairs $(m,v,A)$ where $m\in \widehat{M}$, $0<|v|\leq 3\rho$, $A\in O(d)$ and such that there exists some $m'\in \widehat{M}$ such that $Am'+v$ matches with $m$ in the intersection of their supports. We set $\init(s)=m$ and $\ter(s)$ as the set of such $m'$. Again, by FLC this set is finite.

Naturally, we now define $\val(s)\in \operatorname{Isom}(\RR)$ as the isometry given by $x\mapsto Ax+v$, and extend it to $\widehat{S}^*$ in the natural way, that is, $\val(\varepsilon)$ is the trivial isometry, and $\val(s_1\dots s_n) = \val(s_n)\circ \dots \circ \val(s_1)$. 

Next, let $\widehat{R}$ exactly as in the patch blueprint, that is, the set of pairs of words $(w,w')\in \widehat{S}^*\times \widehat{S}^*$ such that $|w|+|w'|\leq L$, $\init(w)=\init(w')$ and $\val(w)=\val(w')$. Note that here we are also imposing that resulting isometry in a cycle amounts to zero, thus we also include reflections and rotations into the equation.

Finally, define the \define{isometric patch blueprint} as $\widehat{\Gamma}(\mathcal{P},K,L) = (\widehat{M},\widehat{S},\init,\ter,\widehat{R})$.

It can be verified that, in the same way that the patch blueprint captures the space of tilings $\Omega(\PP)$ up to translation (codified by the fact that the tiles are punctured), the isometric patch blueprint captures the space $\Omega_{\operatorname{Isom}}(\PP)$ up to isometries. This follows the same lines as the proof of~\Cref{prop:geometric_main} and the undecidability of the analogous domino problem can also be established for these tiling spaces.

\subsection{Geometric tilings of the hyperbolic space} Consider the hyperbolic space $(\mathbb{H}^n,d)$, where $\mathbb{H}^n = \{ x = (x_1,\dots,x_n) \in \RR^n : x_n >0 \}$ is the upper space model and $d$ is the hyperbolic metric on $\mathbb{H}^n$. We can define tiles, tilings and patches in the same way as in the Euclidean space. 

Given a finite set of punctured tiles $\PP$, we can consider the space of tilings $\Omega_{\mathbb{H}^n}(\PP)$ where every tile is the image of some $p\in \PP$ up to an isometry of $\mathbb{H}^n$ and define finite local complexity as in the previous subsection. We can also define an isometric patch blueprint $\Gamma = \Gamma_{\mathbb{H}^n}(\PP,K,L)$ as before by replacing isometries of $\RR^d$ by isometries of $\mathbb{H}^n$. Finally, we can define a natural continuous injection \[ \Psi\colon \Omega_{\mathbb{H}^n}(\PP)/\operatorname{Isom}(\mathbb{H}^n) \to \mathcal{M}(\Gamma). \]

However, we do not know if an analogue of~\Cref{prop:geometric_main} holds, that is, we do now know if there are parameters $K$ and $L$ for which this map is surjective and thus we don't know if this blueprint does capture the space of hyperbolic tilings of $\mathbb{H}^n$ up to isometries.

\begin{question}
    Suppose $\PP$ is a set of punctured tiles in $\mathbb{H}^n$ with finite local complexity. Is the natural map $\Psi\colon \Omega_{\mathbb{H}^n}(\PP)/\operatorname{Isom}(\mathbb{H}^n) \to \mathcal{M}(\Gamma_{\mathbb{H}^n}(\PP,K,L))$ a homeomorphism for some values of $K$ and $L$?
\end{question}

We note that we cannot just copy the proof of~\Cref{prop:geometric_main}, as it uses the Pythagorean theorem which is no longer valid in hyperbolic space. We also note that it is easy to show that the centers of tiles in any such hyperbolic tiling form a discrete set which is quasi-isometric to $\mathbb{H}^n$, and also it is known~\cite{bartholdi2023domino} that the domino problem for any group quasi-isometric to $\mathbb{H}^n$ is undecidable for $n \geq 2$, thus if the above question has positive answer, we would be able to prove the undecidability of the domino problem for geometric hyperbolic tilings as well.



\section*{Acknowledgments}
The authors would like to thank the anonymous reviewer and Benjamin Hellouin de Menibus  for their useful comments and suggestions. S. Barbieri was supported by the ANID grant FONDECYT regular 1240085, and by the projects AMSUD240026 and ECOS230003. N. Bitar was supported by the ANR project IZES ANR-22-CE40-0011.

\printbibliography
\Addresses

\end{document}